\documentclass[10pt,english,letter,reqno]{amsart}

\usepackage{amssymb,url,xspace,amsthm}
\usepackage{amsmath}
\usepackage{mathtools}
\usepackage[T1]{fontenc}
\usepackage{graphicx}
\usepackage{enumitem}
\usepackage{amsrefs}

\usepackage{hyperref}
\hypersetup{
  colorlinks = true,
  urlcolor   = blue,
  linkcolor  = blue,
  citecolor  = blue
}

\newcommand{\Ostar}{O^{\ast}}

\theoremstyle{plain}
\newtheorem{theorem}{Theorem}
\newtheorem{proposition}[theorem]{Proposition}
\newtheorem{lemma}[theorem]{Lemma}
\newtheorem{corollary}[theorem]{Corollary}
\theoremstyle{definition}
\newtheorem{remark}[theorem]{Remark}

\author[A. Fiori]{Andrew Fiori}
\address{Department of Mathematics and Statistics, University of Lethbridge,
4401 University Drive,
Lethbridge, Alberta,
T1K 3M4,
Canada}
\email{andrew.fiori@uleth.ca}
\thanks{Andrew Fiori thanks and acknowledges the University of Lethbridge for their financial support as well as the support of NSERC Discovery Grant RGPIN-2020-05316.}

\keywords{Riemann zeta function, zero density, short intervals, explicit estimates}
\subjclass[2020]{Primary 11M26; Secondary 11M06, 11Y35}

\title{Zero Density Theorems for Short Intervals}

\begin{document}
\begin{abstract}
We provide explicit bounds on the number of zeros for $\zeta$ in short intervals near the edge of the critical strip.
 Ultimately we show that even in short intervals a positive proportion of zeros will not be near the edge of the critical strip.
 For example with $h>100$ and $t>10^{100}$ the proportion of zeros of $\zeta$ with imaginary part in $[t-h,t+h]$ which have real part in $[\frac{1}{16},\frac{15}{16}]$ is at least $13\%$ and a positive proportion is already obtained at $t>10^{43}$.
\end{abstract}

\maketitle

The aim of this article is to provide bounds on the number of zeros in small sections of the critical strip, particularly along the boundary of the critical strip. We will look at primarily two types of regions, circular, and rectangular.

Throughout the article we shall denote by
\begin{itemize}
    \item $N(T)$ the number of zeros, $\rho$ of $\zeta$ with imaginary $0 < \Im(\rho) \leq T$.
    \item $N(s,\alpha)$ the number of zeros $\rho$ of $\zeta$ with $|\rho-s|\leq \alpha$.
    \item $N(T_1,T_2,\alpha)$ the number of zeros $\rho$ of $\zeta$ with $T_1 \leq \Im(\rho) \leq T_2$
    and $\Re(\rho)>1-\alpha$.
\end{itemize}
In all cases zeros are counted with multiplicity and we shall maintain the convention that zeros on the boundary count with $1/2$ their multiplicity. We note that using this last convention, as opposed to including or excluding the boundary does not impact the bounds we would prove except for the degenerate cases $T=0$, $\alpha=0$, or $T_1=T_2$.

Note that other sources write $N(\sigma,T)$ for  the number of zeros $\rho$ of $\zeta$ with $0 < \Im(\rho) \leq T$ and $\Re(\rho)>\sigma$. We don't anticipate this causing any confusion.

There are many results on zero density, however most of these consider $T_2-T_1$ to be large in comparison to the size of $T_2$. Our aim is to treat much smaller intervals. At the same time, we note that bounds on the number of zeros in short intervals are a key ingredient to a common strategy for proving bounds on zeros in long intervals (see for example \cite{Bellotti2025}). 

One of the simplest to interpret corollaries to our main theorem is
\begin{corollary}\label{cor:simpmain}
Suppose $0<\alpha<1/6$. For $h_0$ and $t_0$ sufficiently large, and for all $t>t_0$ and $t^{2/3}>h>h_0$,
the proportion of zeros $\rho$ of $\zeta(s)$ with $\Im(\rho) \in [t-h,t+h]$ for which $\Re(\rho) \in [\alpha,1-\alpha]$
is bounded below by an explicitly computable positive constant.
That is
\[  1-\frac{2N(t-h,t+h,\alpha)}{N(t+h)-N(t-h)} >  C_{\alpha,h_0,t_0} > 0. \]
Example values of $C_{\alpha,h_0,t_0}$ are given in Table~\ref{table:chat}.
\end{corollary}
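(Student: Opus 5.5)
The plan is to bound $N(t-h,t+h,\alpha)$ from above by $c_\alpha h\log t$ with an explicit constant $c_\alpha$ strictly smaller than $\frac{1}{2\pi}$ (times the appropriate factor). We then compare with the Riemann--von Mangoldt formula $N(t+h)-N(t-h) = \frac{h}{\pi}\log\frac{t}{2\pi} + O(\log t)$, whose error term has an explicit constant (Trudgian's bound, $|S(T)|\le 0.11\log T+\dots$). Functional-equation symmetry $\rho\mapsto 1-\bar\rho$ shows that zeros with real part $<\alpha$ number exactly $N(t-h,t+h,\alpha)$. This accounts for the factor $2$ in the statement. So it suffices to prove
\[ N(t-h,t+h,\alpha) \le \Bigl(\tfrac{1}{2}-\tfrac{C}{2}\Bigr)\tfrac{h}{\pi}\log t + O(\log t) \]
with explicit constants. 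After that, we take $h_0$ and $t_0$ large enough that the $O(\log t)$ terms, divided by $h\log t$, are absorbed.

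For the upper bound I would use the circular counts $N(s,r)$ via Jensen's formula. Take a center $s_0=\sigma_0+i\tau$ with $\sigma_0>1$, say $\sigma_0=1+\delta$. Take a radius $R$ so that the disc $|s-s_0|\le r$, with $r=\delta+\alpha$, covers the strip piece $\Re s>1-\alpha$ near height $\tau$. Jensen gives
\[ N(s_0,r)\log\frac{R}{r}\le \frac{1}{2\pi}\int_0^{2\pi}\log|\zeta(s_0+Re^{i\theta})|\,d\theta-\log|\zeta(s_0)|. \]
On the circle we bound $\log|\zeta|$ by explicit convexity bounds: $|\zeta(\sigma+it)|\ll t^{\mu(\sigma)}\log t$ with $\mu(\sigma)=\frac{1-\sigma}{2}$ on $[0,1]$ and $\frac12-\sigma$ for $\sigma<0$, and bounded for $\sigma>1$. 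Below, $|\zeta(s_0)|\ge \zeta(2\sigma_0)/\zeta(\sigma_0)$. This yields $N(s_0,r)\le \frac{A(\sigma_0,R)}{\log(R/r)}\log t + O(1)$, where $A$ is the average of $\mu$ over the circle. Because $\mu$ is small to the right of $1/2$ and the circle spends half its length at $\Re s>\sigma_0$, where $\mu=0$, $A$ is markedly smaller than what the $\frac{1}{\pi}\log t$ density would give for the vertical extent covered. That is the mechanism producing the saving.

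Next, cover the rectangle $[1-\alpha,1]\times[t-h,t+h]$ by about $h/w$ such discs, with centers spaced vertically by $w$ where $w^2/4+(\dots)\le r^2$ geometry allows. Summing gives $N(t-h,t+h,\alpha)\le \frac{h}{w}\cdot\frac{A}{\log(R/r)}\log t+O(\log t)$. The restrictions $h<t^{2/3}$ and $t$ large ensure that all heights are $\asymp t$, so $\log(\tau)=\log t+o(1)$ uniformly. Finally, optimize over $\delta,R,w$ for the given $\alpha<1/6$. Alternatively, one can use a smoothed count (a Littlewood-lemma rectangle argument $\int\log|\zeta|$ along $\Re s=1-\alpha'$ with convexity) in place of discs.

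The main obstacle is making the numerical constant genuinely less than the half-density $\frac{1}{2\pi}\log t$ per unit height. Disc covering wastes area, and the convexity exponent is only $\frac{1-\sigma}{2}$, so the saving must come from optimizing the geometry. This explains the condition $\alpha<1/6$: beyond it the crude bound presumably no longer beats $\frac12$. If the disc approach falls short, I would switch to the Littlewood/rectangle version with a weighted count $\int_{1-\alpha'}^{1}N(\sigma,\cdot)\,d\sigma$, which avoids overlap losses. I would then extract the count at $1-\alpha$ by monotonicity in $\sigma$, giving a factor $1/(\alpha'-\alpha)$ that is then optimized.
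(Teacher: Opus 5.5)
\textbf{Gap: convexity bounds cannot give any saving.} Your overall architecture matches the paper's: an upper bound $N(t-h,t+h,\alpha)\le c\,h\log t+O(\log t)$, an explicit Riemann--von Mangoldt lower bound, and the factor $2$ from $\rho\mapsto 1-\bar\rho$. Your two detectors, Jensen smoothed over vertically translated centres and Littlewood's lemma, are exactly the mechanisms behind Theorems~\ref{thm:rectangularjensen} and \ref{thm:littlewoodshort}. The problem is the analytic input. With only the convexity exponent $\mu(\sigma)=\frac{1-\sigma}{2}$, neither detector can beat the trivial bound $\frac{h}{2\pi}\log t$, however you optimize the geometry.

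For Littlewood with $\sigma_0=1-r$ you get
\[ N(t-h,t+h,\alpha)\lesssim \frac{\mu(1-r)}{r-\alpha}\cdot\frac{h}{\pi}\log t, \]
which is below $\frac{h}{2\pi}\log t$ only if $\mu(1-r)<\frac{r-\alpha}{2}$. Convexity gives exactly $\mu(1-r)=\frac r2$ for $r\le 1$, and for $r>1$ the ratio is $\frac{r-1/2}{r-\alpha}\ge\frac{1}{2(1-\alpha)}$.

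For Jensen centred at $1+\delta+i\tau$ with radius $R\le 1+\delta$, integrated over $\tau$ (already better than any disc covering), the main term per unit height is
\[ \frac{\log t}{2\pi}\int_0^{2\pi}\mu(1+\delta+R\cos\theta)\,d\theta=\frac{\log t}{2\pi}\,g(\delta), \qquad g(d)=\sqrt{R^2-d^2}-d\arccos(d/R). \]
A zero with $\Re\rho>1-\alpha$ is detected with weight at least $D_{R,\delta+\alpha}=2g(\delta+\alpha)$. Since $g'(d)=-\arccos(d/R)<0$, the resulting bound $\frac{g(\delta)}{g(\delta+\alpha)}\cdot\frac{h}{2\pi}\log t$ is never better than trivial.

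This is structural, not wasted area. Heuristically, the convexity profile (including $\mu=0$ for $\sigma>1$ and $\frac12-\sigma$ for $\sigma<0$) is exactly what $\log|\zeta|$ would look like if half the zeros lay on $\Re s=1$ with density $\frac{\log t}{4\pi}$, mirrored on $\Re s=0$. So no argument using only convexity can exclude that configuration. In particular your explanation of the threshold $\alpha<1/6$ is wrong: with convexity, no $\alpha>0$ works.

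\textbf{Missing ingredient: explicit subconvexity.} The paper feeds the detectors:
\begin{enumerate}
\item Weyl-type bounds on the critical line, namely $|\zeta(\frac12+it)|\le 0.618|t|^{1/6}\log|t|$ or $66.7\,t^{27/164}$.
\item Yang's van der Corput bounds $|\zeta(1-\frac{k}{2^k-2}+it)|<1.546|t|^{1/(2^k-2)}\log|t|$, which beat convexity near the $1$-line, where small $\alpha$ needs it.
\end{enumerate}
These are interpolated by Phragm\'en--Lindel\"of into the piecewise linear coefficients $m_k\sigma+b_k$. For instance $c_{1,1/2}\approx 0.147$, where convexity would give $0.25$.

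With these inputs the leading coefficients $B_{1,\alpha,r}$ or $A_{1,\alpha,r}$ drop below $\frac14$. That is precisely the condition for $2N(t-h,t+h,\alpha)<N(t+h)-N(t-h)$ asymptotically. The range $\alpha<1/6$ is where this happens numerically, e.g.\ $A_{1,1/6,1/2}=\frac32\cdot\frac{27}{164}\approx 0.247$.

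Once convexity is replaced by these bounds, the rest of your plan goes through as in Corollaries~\ref{cor:main-jensen} and \ref{cor:main-littlewood}. You absorb the $\log\log t$ and $O(\log t)$ terms by taking $t_0$ and $h_0$ large, and set $C_{\alpha,h_0,t_0}=1-2C_2$.
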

This is an immediate consequence of Corollary~\ref{cor:main-jensen} or Corollary~\ref{cor:main-littlewood}, taking $C_{\alpha,h_0,t_0} = 1-2C_2$ for the relevant $C_2$; the factor of $2$ accounts for the zeros with $\Re(\rho)<\alpha$, which the functional equation puts in bijection with those having $\Re(\rho)>1-\alpha$.

\begin{table}[h]
\caption{Table of values for Corollary~\ref{cor:simpmain}. For $t>t_0$ and $t^{2/3}>h>h_0$ the proportion of zeros with imaginary part within $h$ of $t$ whose real part is in $[\alpha,1-\alpha]$ is
at least $C_{\alpha,h_0,t_0}$. The column ``mechanism'' records which of Corollary~\ref{cor:main-jensen} (J) and Corollary~\ref{cor:main-littlewood} (L) gives the stronger bound. The constants $C_{\alpha,h_0,t_0}$ are rounded down.}\label{table:chat}
\begin{tabular}{|ccc|cc|}
\hline
$\alpha$&$h_0$&$t_0$& mechanism & $C_{\alpha,h_0,t_0}$ \\
\hline
$1/7$&$100$&$10^{10000}$&L&$0.0628$\\
$1/8$&$100$&$10^{1000}$&L&$0.0970$\\
$1/9$&$10$&$10^{10000}$&L&$0.0760$\\
$1/10$&$5$&$10^{10000}$&L&$0.0236$\\
$1/12$&$5$&$10^{10000}$&L&$0.1040$\\
$1/16$&$5$&$10^{10000}$&L&$0.2122$\\
$1/16$&$100$&$10^{43}$&L&$0.0054$\\
$1/16$&$100$&$10^{100}$&L&$0.1374$\\
$1/16$&$100$&$10^{10000}$&L&$0.3484$\\
$1/32$&$100$&$10^{100}$&L&$0.2157$\\
$1/64$&$2$&$10^{1000}$&J&$0.1649$\\
$1/64$&$100$&$10^{50}$&L&$0.1312$\\
$1/128$&$100$&$10^{1000}$&L&$0.5527$\\
$1/256$&$1$&$10^{10000}$&L&$0.0934$\\
$1/2048$&$100$&$10^{10000}$&L&$0.7163$\\
\hline
\end{tabular}
\end{table}
The above says that a positive proportion of zeros are not near the edge of the critical strip in relatively small intervals.
Tables~\ref{tab:C2J} and \ref{tab:C2L} allow one to get many more explicit examples of positive proportions, whereas Tables~\ref{tab:C2Jfrontier} and \ref{tab:C2Lfrontier} illustrate minimal values for $t_0$ for which a positive proportion result is attainable.

This result is in some ways qualitatively weaker than other estimates about zeros being on or near the critical line. 
For example the approach of Levinson, as improved by Conrey and then Pratt, Robles, Zaharescu and Zeindler (See \cite{Levinson1974,Conrey1989,PrattEtAl2020})
establishes that, as $t\rightarrow \infty$, a little over 40\% are on the critical line. However, the approach only appears to work on long intervals, that is, intervals of length at least $\omega(T)$.
In fact, since roughly $40\%$ of the zeros less than $(5/4)T$ are less than $(3/4)T$ the result barely tells you that in an interval of width $T/2$ at height $T$ there must be any zeros on the critical line. The recent claimed improvement to $2/3$ in \cite{AlpogeFurman2026} doesn't fundamentally alter this.
Karatsuba's proof of Selberg's conjecture  does lower that threshold to powers of $T$, and subsequent work ultimately establishes that almost every relatively short (sub-exponential in $\log(T)$) interval contains zeros on the critical line (see \cite{Karatsuba1993}). But almost all intervals still isn't all, and they still aren't obtaining a positive proportion.
Standard zero-density results going back at least to Bohr and Landau (see \cite{BohrLandau1913}) tell us 100\% of zeros are within $\epsilon$ of the critical line. But again this tells us nothing about short intervals. 
In the opposite direction, results of Montgomery in fact tell us that if there are zeros very near the edge of the critical strip, there will somehow be many of them (see \cite[Chapter~11]{Montgomery1973}).

We recall that \cite{PlattTrudgian2021} has verified that up to height $H_0 := 3\cdot 10^{12}$ every non-trivial zero of $\zeta$ has been
verified to lie exactly on the critical line.

The above illustrates the result; the new type of theorem we are actually providing is:
\begin{theorem}[Jensen mechanism]\label{thm:main-jensen}
Suppose $0<\alpha<r<1$, $\alpha<1/6$. For any $t\geq e$ and $t^{2/3}\geq h\geq 0$ we have
\[ N(t-h,t+h,\alpha) \;<\; U_J(\alpha,r,h,t) + \Ostar\!\left(\frac{3.35}{D_{r,\alpha}\,t^{1/3}}\right).\]
where
\begin{align*} 
U_J(\alpha,r,h,t) := &(2h+2\hat{\alpha}_r)\!\left(\frac{B_{1,\alpha,r}}{\pi}\log|t| + B_{2,\alpha,r}\log\log|t| + B_{3,\alpha,r}\right) + B_{4,\alpha,r}
\end{align*}
with $\hat{\alpha}_r$, $B_{1,\alpha,r},\ldots,B_{4,\alpha,r}$ as in Theorem
\ref{thm:rectangularjensen} below. Sample values are in Table~\ref{tab:rectangularjensen}.
\end{theorem}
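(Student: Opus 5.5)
The plan is to deduce the statement from Theorem~\ref{thm:rectangularjensen}, which we take to be a bound for the number of zeros in a rectangle of width $\alpha$ at the right edge of the critical strip. We cover that rectangle by discs and apply Jensen's formula to each disc.

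\textbf{Step 1: the covering.} Fix a centre $\sigma_0=1-\alpha+r$, or a comparable point just to the right of the line $\Re s=1$ so that $\zeta$ is bounded below at the centre, and a radius $r$. A disc $D(s,r)$ centred at height $\Im s=\tau$ contains the full horizontal segment $[1-\alpha,1]\times\{\tau'\}$ whenever $|\tau'-\tau|\le \hat{\alpha}_r$. Here $\hat{\alpha}_r$ is the half-height of the slab of width $\alpha$ inscribed in the disc, essentially $\sqrt{r^2-(\text{offset})^2}$.

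So the rectangle $[1-\alpha,1]\times[t-h,t+h]$ is covered by about $(2h+2\hat\alpha_r)/(2\hat\alpha_r)$ such discs, with centres spaced $2\hat\alpha_r$ apart. It is cleaner, and I expect this is what the constants encode, to integrate (average) the disc count over the centre height $\tau\in[t-h-\hat\alpha_r,t+h+\hat\alpha_r]$. Each zero in the rectangle is then captured for a $\tau$-measure of at least $2\hat\alpha_r$. This gives
\[
N(t-h,t+h,\alpha)\le \frac{1}{2\hat\alpha_r}\int N(\sigma_0+i\tau,r)\,d\tau ,
\]
and that is the source of the factor $(2h+2\hat\alpha_r)$.

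\textbf{Step 2: Jensen on each disc.} Apply Jensen's formula on a slightly larger circle of radius $R>r$ about $s_0=\sigma_0+i\tau$:
\[
N(s_0,r)\log(R/r)\le \frac1{2\pi}\int_0^{2\pi}\log|\zeta(s_0+Re^{i\theta})|\,d\theta-\log|\zeta(s_0)|.
\]
\begin{itemize}
\item The term $\log|\zeta(s_0)|$ is bounded below explicitly, since $\Re s_0>1$ gives $|\zeta(s_0)|\ge \zeta(2\sigma_0)/\zeta(\sigma_0)$.
\item The circle integral is bounded using explicit convexity or subconvexity bounds for $|\zeta(\sigma+iT)|$. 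These take the form $A\,T^{k(\sigma)}\log T$, with $k(\sigma)$ piecewise linear and vanishing for $\sigma\ge1$. They produce the main term $\frac{B_1}{\pi}\log t$, arising from $\frac1{2\pi}\int k(\sigma_0+R\cos\theta)\,d\theta\cdot\log t$ divided by $\log(R/r)$, together with $B_2\log\log t$ from the $\log T$ factor and the constant $B_3$.
\end{itemize}
Averaging over $\tau$ gives the first term. The endpoint effects, from discs straddling the ends of the window and from the conversion between a sum over discs and an integral, give $B_4$.

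\textbf{Step 3: the error term.} On the circle the height varies from $t$ by at most $h+\hat\alpha_r+R\le t^{2/3}+O(1)$. Replacing $\log(t+u)$ by $\log t$ therefore costs $O(h/t)\le O(t^{-1/3})$, and similarly for the $\log\log$ term. Multiplying by the prefactor $1/D_{r,\alpha}$, which I take to be the normalising $\log(R/r)$ or a $\hat\alpha_r$-type denominator from Theorem~\ref{thm:rectangularjensen}, produces the stated $\Ostar(3.35/(D_{r,\alpha}t^{1/3}))$. The hypothesis $t\ge e$ keeps $\log\log t\ge0$, and $\alpha<1/6$ is used so that the circle stays inside the range where the chosen explicit bound on $\zeta$ applies.

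\textbf{Main obstacle.} The hardest step is the bookkeeping in Step~3: the expansion $\log(t+u)=\log t+\Ostar(|u|/t)$ must be tracked uniformly through the $\theta$-integral and the $\tau$-average, with the constant $3.35$ coming out cleanly. I would first try bounding $|u|\le t^{2/3}+2$ crudely, absorbing everything using $t\ge e$, and only then check that this is sufficient.
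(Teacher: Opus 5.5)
There is a genuine gap, and it concerns the range of $t$. The statement is asserted for all $t\ge e$ and $0\le h\le t^{2/3}$. Every explicit input behind Theorem~\ref{thm:rectangularjensen} (Proposition~\ref{prop:interpolation}, Lemma~\ref{lem:zetalessthanhalf}, the subconvexity bounds) holds only above some height, and that theorem itself needs $t-h-\hat{\alpha}_r>13$. Its error term collapses to $\frac{1}{D_{r,\alpha}}\bigl(3.34t^{-1/3}+4.01t^{-4/3}+291t^{-4/3}/\log t\bigr)\le 3.35/(D_{r,\alpha}t^{1/3})$ only for $t>10^{12}$.

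Below that height the method's own error is too large. At $t=30$, $h=t^{2/3}$, the general error term of Theorem~\ref{thm:rectangularjensen} exceeds $2/D_{r,\alpha}$, roughly twice $3.35/(D_{r,\alpha}t^{1/3})$. For $t$ near $e$ your circles reach the real axis and the pole at $s=1$, where neither the $\zeta$ bounds nor $\log(t+u)=\log t+\Ostar(|u|/t)$ make sense. So absorbing everything using $t\ge e$ cannot give the constant $3.35$.

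What you are missing is the numerical verification of the Riemann hypothesis up to $H_0=3\cdot10^{12}$ (Platt--Trudgian). The paper's entire proof is the following three-case reduction.
\begin{enumerate}
\item If $t+h\le H_0$: since $\alpha<1/6$ gives $1-\alpha>1/2$, we have $N(t-h,t+h,\alpha)=0$, so this case is trivial.
\item If $t-h\ge H_0$: apply the $t>10^{12}$, $h<t^{2/3}$ form of Theorem~\ref{thm:rectangularjensen} directly.
\item If $t-h<H_0<t+h$: replace $(t,h)$ by $(t',h')=(t+\delta,h-\delta)$ with $\delta=\frac12(H_0-t+h)$. Then $t'-h'=H_0$ and $t'+h'=t+h$, so the count is unchanged. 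Moreover $0<h'<h\le t^{2/3}<t'^{2/3}$ and $t'^{-1/3}<t^{-1/3}$. Finally $U_J(\alpha,r,h',t')\le U_J(\alpha,r,h,t)$: shrinking $h$ by $\delta$ saves $2\delta\bigl(\frac{B_{1,\alpha,r}}{\pi}\log t+\cdots\bigr)$, while raising $t$ by $\delta$ costs only $O\bigl(\delta(h'+\hat{\alpha}_r)/t\bigr)$.
\end{enumerate}

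Separately, you should not re-derive Theorem~\ref{thm:rectangularjensen}. The statement fixes $\hat{\alpha}_r=\sqrt{r^2-\alpha^2}$ and the constants $B_{i,\alpha,r}$, and your Steps 1--2 would not reproduce them. As stated, your covering also fails: with centre $1-\alpha+r$, a radius-$r$ disc meets $\Re s=1-\alpha$ in a single point.

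In the paper, Jensen is applied at $1+i\tau$ with radius $r$ itself, and each zero's weight $\log(r/|\rho-1-i\tau|)$ is integrated over $|\tau-\Im\rho|\le\hat{\alpha}_r$. That is where $D_{r,\alpha}=\int_{-\hat{\alpha}_r}^{\hat{\alpha}_r}\log\bigl(r/\sqrt{\alpha^2+x^2}\bigr)\,dx$ comes from, not $2\hat{\alpha}_r\log(R/r)$.

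Likewise, $B_{4,\alpha,r}=(2\Phi(1)+c_{5,r})/D_{r,\alpha}$ is not an endpoint effect. The $\tau$-integrals of $\log|\zeta(1+i\tau)|$ and of the right half-circle are evaluated through the antiderivative $\Phi$ of $-\log\zeta$ (Propositions~\ref{prop:outside2} and \ref{prop:outside3}). They are therefore $O(1)$ uniformly in $h$. Your pointwise lower bound for $|\zeta(s_0)|$ would instead make those contributions proportional to $2h+2\hat{\alpha}_r$, inflating $B_{3,\alpha,r}$.

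So invoke Theorem~\ref{thm:rectangularjensen} as a black box and put the work into the range of $t$.
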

\begin{proof}
The case of $t-h>H_0$ is precisely a consequence of Theorem~\ref{thm:rectangularjensen}, restricted to  $h<t^{2/3}$ and $t>10^{12}$.

The $t+h<H_0$ case is truly trivial.

For the case $t-h<H_0$ and $t+h>H_0$ simply replace $t$ and $h$ by $t' = t + \delta$ and $h' = h-\delta$ where $\delta = \frac{1}{2}(H_0-t+h)$
 so that $t'-h'=H_0$ and $t'+h'=t+h$. Then Theorem~\ref{thm:rectangularjensen}, applied with $t'$ and $h'$, translates back to a stronger claim in terms of $t$ and $h$
 than what the Theorem claims.
\end{proof}
Note that if $h\gg t^{2/3}$ and $t>10^{12}$, the claim becomes an easy consequence of standard zero-density estimates for $\zeta$. We provide a sketch but omit the details.
By \cite{KadiriLumleyNg2018} we have 
\[ N(t-h,t+h,\alpha) \ll (t+h)^{\frac{8}{3}\alpha}\log(t+h)^3 \ll   U_J(\alpha,r,h,10^{12}) < U_J(\alpha,r,h,t)  \]
Since $\alpha<1/6$ and $h>10^8$ the coefficient $\frac{\log(h)^3}{h^{1-4\alpha}}$ shrinks faster as $\alpha\to 0$ than $B_{1,\alpha,r}$ can.
The main obstacle between $t^{2/3} < h \ll t^{2/3}$ is the $\Ostar$ term, which would need to be weakened in the statement above, or tightened in our source.

\begin{theorem}[Littlewood mechanism]\label{thm:main-littlewood}
Suppose $0<\alpha<r$, $\alpha<1/6$. For any $t\geq e^e$ and $t^{2/3}\geq h\geq 0$ we have
\[  N(t-h,t+h,\alpha) \;<\;  U_L(\alpha,r,h,t) + \Ostar\!\left(\frac{1}{t(r-\alpha)}\left(0.335 + \frac{0.51}{\log|t|}\right)\right), \]
where
\begin{align*}
 U_L(\alpha,r,h,t) :=
&\frac{2A_{1,\alpha,r}h+2A_{4,\alpha,r}}{\pi}\log|t| + (2A_{2,\alpha,r}h+2A_{5,\alpha,r})\log\log|t|
\\&+ 2A_{3,\alpha,r}h+A_{6,\alpha,r}
\end{align*}
with $A_{1,\alpha,r},\ldots,A_{6,\alpha,r}$ as in Theorem~\ref{thm:littlewoodshort} below. Sample values are in Table~\ref{tab:littlewoodshort}.
\end{theorem}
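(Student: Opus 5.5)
The plan is to mirror the proof of Theorem~\ref{thm:main-jensen} exactly, with Theorem~\ref{thm:littlewoodshort} in place of Theorem~\ref{thm:rectangularjensen}. I take Theorem~\ref{thm:littlewoodshort} to be the underlying estimate. It should give, for $t-h>H_0$ (or at least $t-h$ beyond some explicit height) and $h\le t^{2/3}$, a bound of the form $N(t-h,t+h,\alpha)\le U_L(\alpha,r,h,t)+\Ostar(\cdots)$. That bound comes from Littlewood's lemma applied to $\log\zeta$ on a rectangle $[1-r,\sigma_1]\times[t-h,t+h]$, combined with the weighting
\[ \int_{1-r}^{\sigma_1} N(\sigma) \, d\sigma \ \ge\ (r-\alpha)\,N(t-h,t+h,\alpha). \]
This weighting is the source of the $1/(r-\alpha)$ in the error term. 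The horizontal-edge contributions, controlled by bounds on $\arg\zeta$ at the ends of the interval, are where $A_4,A_5,A_6$ come from, together with an error of size $O(1/t)$.

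\textbf{Step 1: $t-h>H_0$.} The claim is then exactly Theorem~\ref{thm:littlewoodshort}, restricted to $h\le t^{2/3}$. The $\Ostar\bigl(\frac{1}{t(r-\alpha)}(0.335+0.51/\log|t|)\bigr)$ term should be what remains of that theorem's secondary terms once $h\le t^{2/3}$ is imposed. For example, a term like $h/t^{5/3}$ or $1/t$ coming from Stirling or from $\arg$-bounds on the horizontal edges gets absorbed using $t>10^{12}$. I would check that the conditions $t\ge e^e$ (needed so that $\log\log|t|>0$) and $h\le t^{2/3}$ are all that is used.

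\textbf{Step 2: $t+h<H_0$.} Here every zero lies on the critical line, by the verification of \cite{PlattTrudgian2021}. Since $\alpha<1/6<1/2$, we get $N(t-h,t+h,\alpha)=0$. The right-hand side is positive: $U_L>0$ because its coefficients and logarithms are nonnegative for $t\ge e^e$, and the $\Ostar$ term only enlarges the admissible range. So the case is trivial.

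\textbf{Step 3: the straddling case $t-h\le H_0<t+h$.} Set $\delta=\tfrac12(H_0-t+h)$, $t'=t+\delta$ and $h'=h-\delta$. Then $[t'-h',t'+h']=[H_0,t+h]$, and no zeros counted by $N$ lie below $H_0$, so
\[ N(t-h,t+h,\alpha)=N(t'-h',t'+h',\alpha). \]
Applying Step 1 with $(t',h')$, I then need $U_L(\alpha,r,h',t')\le U_L(\alpha,r,h,t)$ and a matching comparison of the error terms. This is the one genuinely delicate point. We have $t'>t$, so $\log t'$ and $\log\log t'$ are larger, but $h'<h$. The useful estimate is $\log t'-\log t\le \delta/t\le h/t$, which is tiny compared with the gain $2A_1\delta\log t/\pi$ from shrinking $h$, since $\delta\ge0$ and $\delta\le h$. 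One also needs $h'\le t'^{2/3}$, which is clear. The error term $1/(t'(r-\alpha))$ decreases as $t$ grows. If the gain is insufficient when $\delta$ is very small, I would instead bound the $\log t'$ increase by $A_4\cdot O(h/t)\le O(t^{-1/3})$ and absorb it, or simply note that the error term is stated loosely enough. I expect this monotonicity bookkeeping to be the main obstacle, with everything else inherited directly from Theorem~\ref{thm:littlewoodshort}.
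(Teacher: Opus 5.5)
Your proposal is correct and follows the paper's proof: the paper repeats the three-case argument of Theorem~\ref{thm:main-jensen} with Theorem~\ref{thm:littlewoodshort} in place of Theorem~\ref{thm:rectangularjensen}. The three cases are a direct application when $t-h>H_0$, the trivial case via \cite{PlattTrudgian2021} when $t+h<H_0$, and the shift $t'=t+\delta$, $h'=h-\delta$ in the straddling case. Your Step~3 comparison is more explicit than the paper's and needs no special small-$\delta$ fallback, because both the loss from $\log t'>\log t$ and the gain from replacing $h$ by $h-\delta$ are proportional to $\delta$. The comparison therefore reduces to the coefficient of $h$ in $U_L$ dominating terms of size $O(1/t)$, which is immediate for $t>10^{12}$.
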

\begin{proof}
The proof is as in Theorem
\ref{thm:main-jensen}'s proof, using Theorem
\ref{thm:littlewoodshort} in place of Theorem~\ref{thm:rectangularjensen}
\end{proof}

Using that by \cite[Corollary~1.3]{BellottiWong2026} we have
\[ \left|N(t) - \frac{t}{2\pi}\log\frac t{2\pi e}\right| <  0.097\log|t|+4.954 \]
we conclude that, for $t>10^{12}$ and $0<h<t-1$ we have:
\[ N(t+h) - N(t-h) \geq L(h,t) := \left(\frac{1}{\pi}\log\frac{t}{2\pi}-\frac{(1-\log 2)h^2}{\pi t^2} \right)h - 0.194\log|t| - 9.908. \]
This directly leads to the following two Corollaries, one per mechanism.
\begin{corollary}[Jensen]\label{cor:main-jensen}
Suppose $0<\alpha<r<1$ and $\alpha<1/6$ and then for $t_0>10^{12}$ and
\[ h_0>\frac{97\pi}{500}\left(\frac{1 + \frac{4954}{97\log(t_0)}}{1 -\frac{1-\log 2}{t_0^{2/3}\log(t_0)}-\frac{\log(2\pi)}{\log(t_0)}}\right) \]
 we have that for all $t>t_0$ and $t^{2/3}>h>h_0$
\[\frac{ N(t-h,t+h,\alpha)}{N(t+h)-N(t-h)}  < C_2^J(\alpha,r,h_0,t_0) \]
where
\begin{align*}  C_2^J(\alpha,r,h,t)   :=& \frac{(2+\frac{2\hat{\alpha}_r}{h})\!\left(B_{1,\alpha,r} + \frac{\pi B_{2,\alpha,r}\log\log|t|}{\log|t|} + \frac{\pi B_{3,\alpha,r}}{\log|t|}\right) + \frac{\pi B_{4,\alpha,r}}{h\log|t|} + \frac{3.35\pi}{D_{r,\alpha}h t^{1/3}\log|t|} }
{1 - \frac{\log(2\pi)}{\log|t|} -  \frac{1-\log 2}{t^{2/3} \log|t|} - \frac{0.194\pi}{h} - \frac{9.908\pi}{h\log|t|}}
\end{align*}
is decreasing in both $h_0$ and $t_0$ for any fixed $\alpha$ and $r$.
A table of values for $C_2^J(\alpha,r,h_0,t_0)   $ is given in Table~\ref{tab:C2J}.
\end{corollary}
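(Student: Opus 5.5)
The bound will come from dividing the upper bound of Theorem~\ref{thm:main-jensen} by the lower bound $L(h,t)$ for $N(t+h)-N(t-h)$ obtained just above from \cite[Corollary~1.3]{BellottiWong2026}, and then normalising both by $\frac{h}{\pi}\log|t|$. For $t>t_0>10^{12}$ and $h<t^{2/3}$, Theorem~\ref{thm:main-jensen} gives
\[ N(t-h,t+h,\alpha) < (2h+2\hat{\alpha}_r)\Bigl(\tfrac{B_{1,\alpha,r}}{\pi}\log|t| + B_{2,\alpha,r}\log\log|t| + B_{3,\alpha,r}\Bigr) + B_{4,\alpha,r} + \frac{3.35}{D_{r,\alpha}t^{1/3}}. \]
Multiplying through by $\frac{\pi}{h\log|t|}$ turns this into exactly the numerator of $C_2^J(\alpha,r,h,t)$. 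For the denominator I expand $\log\frac{t}{2\pi}=\log t-\log(2\pi)$ and bound $\frac{h^2}{t^2}\le t^{-2/3}$ using $h<t^{2/3}$. This gives
\[ \frac{\pi}{h\log|t|}L(h,t) \ \ge\ 1-\frac{\log(2\pi)}{\log|t|}-\frac{1-\log 2}{t^{2/3}\log|t|}-\frac{0.194\pi}{h}-\frac{9.908\pi}{h\log|t|}, \]
which is the denominator of $C_2^J$. So whenever this denominator is positive, the ratio is $<C_2^J(\alpha,r,h,t)$.

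The hypothesis on $h_0$ is what guarantees positivity. Rearranging ``denominator $>0$'' gives
\[ h\Bigl(1-\tfrac{\log 2\pi}{\log t}-\tfrac{1-\log 2}{t^{2/3}\log t}\Bigr) > 0.194\pi\Bigl(1+\tfrac{9.908}{0.194\log t}\Bigr), \]
and since $0.194=\tfrac{97}{500}$ and $\tfrac{9.908}{0.194}=\tfrac{4954}{97}$, this is precisely the stated condition with $t_0$ in place of $t$. The condition only becomes weaker as $t$ increases, because the right side decreases in $t$ while the bracket on the left increases in $t$. Hence it holds for all $t>t_0$ and $h>h_0$.

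It remains to show monotonicity, so that $C_2^J(\alpha,r,h,t)\le C_2^J(\alpha,r,h_0,t_0)$. In $h$ this is clear: every $h$-dependent numerator term has the form (positive)$/h$, since the relevant $B$'s and $\hat{\alpha}_r$ are nonnegative, and the denominator terms $-c/h$ increase with $h$. The numerator therefore decreases and the positive denominator increases.

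Monotonicity in $t$ is the main obstacle. The denominator is clearly increasing in $t$. In the numerator, the terms $\frac{B_3}{\log t}$, $\frac{B_4}{h\log t}$ and $\frac{1}{t^{1/3}\log t}$ decrease, provided $B_{3,\alpha,r},B_{4,\alpha,r}\ge0$; if some $B_3$ is negative, I would instead absorb it together with the $B_2$ term. The function $\frac{\log\log t}{\log t}$ is decreasing for $\log t>e$, which holds since $t>10^{12}$, so the $B_2$ term decreases provided $B_{2,\alpha,r}\ge 0$. I would check these signs against the definitions in Theorem~\ref{thm:rectangularjensen}. If one of them fails, the fallback is to define $C_2^J$ at $(h_0,t_0)$ via a supremum over $t\ge t_0$, which is harmless for the tabulated values. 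Once monotonicity holds, evaluating at $(h_0,t_0)$ gives the uniform bound.
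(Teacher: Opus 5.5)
Your proposal is correct and follows the same route as the paper. You divide the bound of Theorem~\ref{thm:main-jensen}, including the $3.35/(D_{r,\alpha}t^{1/3})$ term, by $L(h,t)$ and normalise by $\frac{h}{\pi}\log|t|$ using $h^2/t^2<t^{-2/3}$; you observe that the $h_0$ hypothesis is exactly positivity of the denominator at $t_0$, which persists for larger $t$; and you conclude by monotonicity in $h$ and $t$. The only difference is that you spell out what the paper's ``by inspection''/``clearly decreasing'' leaves implicit: the sign conditions on $B_{2,\alpha,r},B_{3,\alpha,r},B_{4,\alpha,r}$, which hold for the tabulated values, and the fact that $\log\log t/\log t$ decreases once $\log t>e$.
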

\begin{proof}
By inspection we have
\[ \frac{U_J(\alpha,r,h,t)}{L(h,t)} < \frac{(2+\frac{2\hat{\alpha}_r}{h})\!\left(B_{1,\alpha,r} + \frac{\pi B_{2,\alpha,r}\log\log|t|}{\log|t|} + \frac{\pi B_{3,\alpha,r}}{\log|t|}\right) + \frac{\pi B_{4,\alpha,r}}{h\log|t|} + \frac{3.35\pi}{D_{r,\alpha}ht^{1/3}\log|t|} }
{1 - \frac{\log(2\pi)}{\log|t|} -  \frac{1-\log 2}{t^{2/3} \log|t|} - \frac{0.194\pi}{h} - \frac{9.908\pi}{h\log|t|}}. \]
and the right hand side is clearly decreasing in both $h$ and $t$ provided the denominator is positive.
The lower bound on $h_0$ precisely ensures this condition is met.
\end{proof}

\begin{corollary}[Littlewood]\label{cor:main-littlewood}
Suppose $0<\alpha<r$ and $\alpha<1/6$ and then for $t_0>10^{12}$ and
\[ h_0>\frac{97\pi}{500}\left(\frac{1 + \frac{4954}{97\log(t_0)}}{1 -\frac{1-\log 2}{t_0^{2/3}\log(t_0)}-\frac{\log(2\pi)}{\log(t_0)}}\right) \]
 we have that for all $t>t_0$ and $t^{2/3}>h>h_0$  
\[\frac{ N(t-h,t+h,\alpha)}{N(t+h)-N(t-h)}  < C_2^L(\alpha,r,h_0,t_0) \]
where $C_2^L(\alpha,r,h,t) $ is given by
\begin{align*}    \frac{2A_{1,\alpha,r}+\frac{2A_{4,\alpha,r}}{h}+ \pi(2A_{2,\alpha,r}+\frac{2A_{5,\alpha,r}}{h})\frac{\log\log|t|}{\log|t|}+\frac{2\pi A_{3,\alpha,r}}{\log|t|} + \frac{\pi A_{6,\alpha,r}}{h\log|t|} + \frac{\pi\left(0.335 + \frac{0.51}{\log|t|}\right)}{(r-\alpha)ht\log|t|}}
{1 - \frac{\log(2\pi)}{\log|t|} -  \frac{1-\log 2}{t^{2/3} \log|t|} - \frac{0.194\pi}{h} - \frac{9.908\pi}{h\log|t|}}
\end{align*}
is decreasing in both $h_0$ and $t_0$ for any fixed $\alpha$ and $r$.
A table of values for $C_2^L(\alpha,r,h_0,t_0)   $ is given in Table~\ref{tab:C2L}.
\end{corollary}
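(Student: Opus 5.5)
The plan mirrors the proof of Corollary~\ref{cor:main-jensen}, with Theorem~\ref{thm:main-littlewood} in place of Theorem~\ref{thm:main-jensen}. Fix $t>t_0>10^{12}$ and $t^{2/3}>h>h_0$.

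\textbf{Step 1: the numerator.} Since $t\geq e^e$, Theorem~\ref{thm:main-littlewood} gives
\[ N(t-h,t+h,\alpha) < U_L(\alpha,r,h,t) + \frac{1}{t(r-\alpha)}\left(0.335+\frac{0.51}{\log|t|}\right). \]

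\textbf{Step 2: the denominator.} From the bound of \cite{BellottiWong2026} we have $N(t+h)-N(t-h)\geq L(h,t)$. Expanding $(t\pm h)\log\frac{t\pm h}{2\pi e}$ to second order, the correction term is at most $\frac{(1-\log 2)h^3}{\pi t^2}$. Because $h<t^{2/3}$, this is at most $\frac{(1-\log 2)h}{\pi t^{2/3}}$, so
\[ L(h,t)\geq \frac{h\log|t|}{\pi}\left(1-\frac{\log(2\pi)}{\log|t|}-\frac{1-\log 2}{t^{2/3}\log|t|}-\frac{0.194\pi}{h}-\frac{9.908\pi}{h\log|t|}\right). \]

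\textbf{Step 3: form the ratio.} Divide the bound from Step 1 by $\frac{h\log|t|}{\pi}$, one term at a time:
\begin{itemize}
\item $\frac{2A_1h+2A_4}{\pi}\log|t|$ becomes $2A_1+\frac{2A_4}{h}$;
\item $(2A_2h+2A_5)\log\log|t|$ becomes $\pi\bigl(2A_2+\frac{2A_5}{h}\bigr)\frac{\log\log|t|}{\log|t|}$;
\item $2A_3h$ becomes $\frac{2\pi A_3}{\log|t|}$, and $A_6$ becomes $\frac{\pi A_6}{h\log|t|}$;
\item the error term becomes $\frac{\pi(0.335+0.51/\log|t|)}{(r-\alpha)ht\log|t|}$.
\end{itemize}
The result is exactly $C_2^L(\alpha,r,h,t)$, provided the denominator is positive.

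\textbf{Step 4: positivity of the denominator.} This is what fixes the threshold on $h_0$. The denominator is positive if and only if
\[ h\left(1-\frac{\log 2\pi}{\log t}-\frac{1-\log 2}{t^{2/3}\log t}\right) > 0.194\pi\left(1+\frac{9.908}{0.194\log t}\right). \]
Since $0.194=\frac{97}{500}$ and $\frac{9.908}{0.194}=\frac{4954}{97}$, this is precisely the hypothesis on $h_0$ at $t=t_0$. Both sides move in the favourable direction as $t$ and $h$ increase, so the condition persists for all $t>t_0$ and $h>h_0$.

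\textbf{Step 5: monotonicity (the main obstacle).} With the denominator positive, it remains to show $C_2^L$ is decreasing in $h$ and $t$; then $C_2^L(\alpha,r,h,t)\leq C_2^L(\alpha,r,h_0,t_0)$.
\begin{itemize}
\item \emph{Denominator.} It is increasing in both $h$ and $t$.
\item \emph{Numerator, in $h$.} Each term is a constant plus a nonnegative multiple of $1/h$, assuming $A_{4},A_{5},A_{6}\geq 0$. This sign condition must be checked from Theorem~\ref{thm:littlewoodshort}.
\item \emph{Numerator, in $t$.} The factors $1/\log t$ and $1/(t\log t)$ are decreasing. The factor $\frac{\log\log t}{\log t}$ is decreasing once $t>e^e$, which holds since $t>10^{12}$.
\end{itemize}
So the numerator is decreasing in both variables provided the $A_i$ appearing with these factors are nonnegative. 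I expect this sign verification to be the only real work, since it depends on the explicit constants of Theorem~\ref{thm:littlewoodshort}.

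If some $A_i$ were negative, I would first try absorbing it into a positive companion term: for example, pair $2A_3h$ with the $\log\log$ term and show the combination is still decreasing for $t>10^{12}$.
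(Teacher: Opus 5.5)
Your proposal is correct and follows the paper's argument. The paper proves this corollary exactly as it proves the Jensen one: it bounds the numerator by Theorem~\ref{thm:main-littlewood} and the denominator by $L(h,t)$ with $h<t^{2/3}$, gets positivity of the denominator from the $h_0$ hypothesis, and asserts monotonicity ``by inspection''. The sign check you defer is precisely what that inspection passes over, and it goes through. $A_{4,\alpha,r},A_{5,\alpha,r},A_{6,\alpha,r}>0$ follows from their definitions, and any negativity of $A_{3,\alpha,r}$ (the interpolated constant term can dip slightly below zero) is dominated by the accompanying $\log\log|t|$ term, as you propose, since $\log\log t-1>2.3$ for $t>10^{12}$.
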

\begin{proof}
The proof is the same as the above.
\end{proof}
These allow us to conclude that if $B_{1,\alpha,r}<1/4$ or $A_{1,\alpha,r}<1/4$ then there exists a finite $h_0$, so that for $t$ sufficiently large a positive proportion of zeros in intervals of width at least $h_0$ will have real part in $[\alpha,1-\alpha]$.

One approach we use relies on bounds for $N(s,\alpha)$. As such we provide updated explicit versions of such bounds in Theorems~\ref{thm:circularregions1} and \ref{thm:circularregions2}.
We note that such bounds on $N(s,\alpha)$ are used in many applications directly. For example in log-free zero-density results (see for example \cite[Corollary~2.3]{Bellotti2025} and \cite[Lemmas 1 and 2]{Pintz2019}). Various approaches to zero-free regions use these types of results as well (see for example \cite{Montgomery1973,Bellotti2024,Ford2002}). Bounds like those in Theorems \ref{thm:main-jensen} and \ref{thm:main-littlewood} also have the potential to be used to improve other zero density results such as in \cite{Bellotti2025,Bellotti2023,Pintz2023} where there can be benefit in bounding the number of zeros in a rectangular region rather than a square one. 

We organize this paper as follows:
\begin{itemize}

    \item In Section~\ref{sec:jensenclassic} we apply Jensen's theorem to bound $N(s,\alpha)$. 
    In order to provide sharp bounds we provide methods to improve estimates for the integral inside the critical strip in Section~\ref{sec:inside}  and outside the critical strip in Section~\ref{sec:refinement-integration}. 

    \item In Sections~\ref{sec:rectangularjensen} and \ref{sec:littlewood} we provide bounds on $N(t-h,t+h,\alpha)$ arising from two different zero detectors, the first of which is better for $\alpha$ smaller, and the second better for $\alpha$ larger. 
    
    \item In order to implement the approach of Section~\ref{sec:littlewood}  in Section~\ref{sec:argintegrals} we provide an improved mechanism to bound the arg integrals which arising as secondary error terms in more standard zero density results.

       \item In Appendix~\ref{sec:background} we provide a number of explicit bounds for $\log|\zeta(s)|$ and $\log(\Gamma(s))$ which we needed elsewhere.
\end{itemize}

There are a number of avenues of future exploration which could improve the result here, particularly for small $t$.
Firstly, there are many choices for zero detectors we could use for $N(s,\alpha)$. In this article we currently use a classical version of Jensen's formula. It would be interesting to explore other detectors such as those in \cite{Ford2002,Ford2019,Bellotti2025,ThornerZaman2024,HeathBrown1992}. As these methods tend to involve estimating $|\frac{\zeta'}{\zeta}|$, or higher derivatives, the ability to bound $|\frac{\zeta'}{\zeta}|$ seems to be a limiting factor here.
However, following the approach of Section~\ref{sec:rectangularjensen} it is possible that even if they can't improve results for $N(1+it,\alpha)$, they could  nonetheless provide improvements for bounds on $N(t-h,t+h,\alpha)$.

A second natural direction of exploration is parts of our analysis away from the $1$-line to work at either $1+\eta$ or $1+\eta(t)$. This would almost certainly improve the qualitative values obtained for smaller $t$ by minimizing the coefficients of $\log\log|t|$ and the $O(1)$ terms at the expense of the coefficient of $\log|t|$.

In the other direction, for extremely large values of $t$, and small values of $\alpha$, producing a variant of Theorem \ref{thm:rectangularjensen} using Theorem \ref{thm:circularregions2} would asymptotically be better. 

If one wants an improved result for small values of $h$, then further developing Remark \ref{rem:smallh} is appropriate, as would be developing alternatives to, or simply not using, the optimizations of Section \ref{sec:intoutside2} since for short intervals bounding the integrand will provide a better bound than bounding its anti-derivative at the endpoints.

\subsection{AI Use Declaration}

All of the ideas, theorems, and proofs in this article were developed by the author. 

AI tools (Claude Opus 5 and Fable 5.1) were used to formalize the content of this article into Lean. In doing so it identified and corrected a number of minor errors. The errors were primarily grammatical, typographical, or missing implicit hypothesis. It also produced code to verify numerics tables. The current tables were produced using its code. The formalization and associated code will be made available on GitHub at \url{https://github.com/andrewfiori/ZeroDensityInShortIntervals}.

\section{Jensen bounds}\label{sec:jensenclassic}

In this section we provide initial bounds for the number of zeros in a circular region using Jensen's formula. Our focus in this section is setting up the argument. In Sections~\ref{sec:inside} and \ref{sec:refinement-integration} we will optimize aspects of the approach.

Denoting by $\rho$ a zero of the function $\zeta$ a standard way to express 
Jensen's formula is that 
\begin{equation}\label{eq:jensen}
\sum_{|\rho - 1+it| < r}  \log(r) - \log(|\rho - 1+it|) = \frac{1}{2\pi}\int_{-\pi/2}^{3\pi/2} \log|\zeta(1+it+re^{i\theta})|\,d\theta - \log|\zeta(1+it)| 
\end{equation}
This leads to
\begin{proposition}\label{prop:jensenbound}
For any $s \in \mathbb{C}$ and $0<\alpha<r$ we have
\[  N(s,\alpha) \leq \frac{1}{\log(r/\alpha)} \left( \frac{1}{2\pi}\int_{-\pi/2}^{3\pi/2} \log|\zeta(s+re^{i\theta})|\,d\theta - \log|\zeta(s)|  \right).\]
\end{proposition}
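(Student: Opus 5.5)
The plan is to read this directly off Jensen's formula \eqref{eq:jensen}, written at a general centre $s$ instead of $1+it$:
\[ \sum_{|\rho - s| < r} \log\frac{r}{|\rho-s|} = \frac{1}{2\pi}\int_{-\pi/2}^{3\pi/2} \log|\zeta(s+re^{i\theta})|\,d\theta - \log|\zeta(s)|. \]
This holds whenever $\zeta$ is holomorphic on $|z-s|\le r$ and $\zeta(s)\neq 0$. The shift of the integration interval to $[-\pi/2,3\pi/2]$ is harmless because the integrand is $2\pi$-periodic.

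The key steps, in order, are these.

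\textbf{Positivity.} Every term on the left is nonnegative, since $|\rho-s|<r$.

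\textbf{Restriction to the small disc.} Drop all terms except those with $|\rho-s|\le\alpha$. Each remaining term is at least $\log(r/\alpha)>0$. Zeros exactly on the circle $|\rho-s|=\alpha$ are counted with weight $1/2$ in $N(s,\alpha)$, but they still contribute $\log(r/\alpha)$ each on the left, so this is fine. This gives $N(s,\alpha)\log(r/\alpha)$ at most the right-hand side, and dividing by $\log(r/\alpha)$ yields the claimed bound.

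\textbf{Degenerate cases.} The main obstacle, though minor, is that the statement is made for \emph{any} $s$, so the hypotheses of Jensen need care.
\begin{itemize}
\item If $\zeta(s)=0$, then $-\log|\zeta(s)|=+\infty$ and the inequality is trivial under the natural convention.
\item If the pole at $1$ lies in the closed disc $|z-s|\le r$, apply Jensen to $(z-1)\zeta(z)$ instead, or note that Jensen's formula for meromorphic functions adds a term $-\sum_{\text{poles}}\log(r/|p-s|)\le 0$ to the left side. That extra term then only strengthens the inequality once it is moved to the right. More precisely, the formula for meromorphic $f$ is $\sum_{\text{zeros}}\log\frac{r}{|\rho-s|}-\sum_{\text{poles}}\log\frac{r}{|p-s|} = \frac{1}{2\pi}\int\log|f| - \log|f(s)|$. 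Since the pole term is nonnegative, the zero sum is bounded by the right-hand side plus the pole term. That is the wrong direction, so this point needs care.
\end{itemize}
I would therefore handle the pole as follows. The pole sits at $1$, and the applications take $s$ near the $1$-line, so the pole can genuinely be inside the disc. I would either restrict to the case where the pole is not in the disc, which the intended applications use with the convention that the formula \eqref{eq:jensen} is as stated, or interpret the statement as following from \eqref{eq:jensen} as given in the paper. In short, the proof is the one-line deduction from \eqref{eq:jensen}. The only work is justifying the positivity and truncation step and checking the boundary and multiplicity conventions, together with zeros on the circle $|\rho-s|=r$, where the summand vanishes.
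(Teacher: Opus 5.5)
Your argument is the same as the paper's: apply Jensen's formula \eqref{eq:jensen} centred at $s$, drop the zeros with $\alpha<|\rho-s|<r$ by positivity, and use that each remaining zero contributes at least $\log(r/\alpha)$. Your worry about the pole is justified and should be settled by restricting the statement, not left open, because when $|s-1|<r$ the statement is actually false: for $s=3/2$, $r=9/10$ the disc contains the pole but no zeros, so $N(s,\alpha)=0$ while the right-hand side is $-\log(9/5)/\log(r/\alpha)<0$. Hence the hypothesis that $1$ lies outside the open disc is needed. It holds automatically in the paper's applications ($s=1+it$ with $t>13>r$), and the paper's one-line proof tacitly assumes it as well.
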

\begin{proof}
    Since each zero $\rho$ with $|s-\rho|\leq\alpha$ contributes at least $\log(r/\alpha)$ to the sum on the left, the result is immediate from Equation \eqref{eq:jensen}.
\end{proof}

The simplest bound for
\[  \frac{1}{2\pi}\int_{-\pi/2}^{\pi/2} \log|\zeta(1+it+re^{i\theta})|\,d\theta  \]
comes from simply taking the uniform bound of $\log|\zeta(1+it+re^{i\theta})| < \log\log|t|$ in this region which leads to a bound of $\log\log|t|/2 + O(1)$ for the integral.

If one uses instead  $\log|\zeta(1+it+re^{i\theta})| < \frac{r-\sigma}{r-1}(\log\log|t|) + \frac{\sigma-1}{r-1}\log(\zeta(r))$ from \cite[Section~4.1]{Fiori2026} one will instead have
\[ \left(\frac{1}{2}-\frac{1}{\pi}\right) \log\log|t| + \frac{\log(\zeta(1+r))}{\pi}  + O(1). \]
We shall provide better bounds for this term in Section~\ref{sec:refinement-integration} so we shall not dwell too long on this term for now.

The simplest bound for
\[  \frac{1}{2\pi}\int_{\pi/2}^{3\pi/2} \log|\zeta(1+it+re^{i\theta})|\,d\theta  \]
we consider comes from $\log|\zeta(1+it+re^{i\theta})|  < \frac{1}{3}r\cos(\theta)\log|t| + \log\log|t| + O(1)$
which leads to the bound
\[  \frac{1}{2\pi}\int_{\pi/2}^{3\pi/2} \log|\zeta(1+it+re^{i\theta})|\,d\theta < \frac{r}{3\pi}\log|t| + \frac{1}{2}\log\log|t| + O(1). \]
Bounding $-\log|\zeta(1+it)| < \log\log|t| + O(1)$ and combining terms thus gives:
\[  \sum_{|\rho - 1+it_0| < r}  \log(r) - \log(|\rho - 1+it_0|)  < \frac{r}{3\pi}\log|t| + (2-\frac{1}{\pi})\log\log|t| + \log(\zeta(r)) + O(1). \]
Counting the contribution from each zero we find that for $\alpha<r$ the number of zeros in a circle of radius $\alpha$ is bounded by
\begin{equation}\label{firstbound} \frac{\frac{r}{3\pi}\log|t| + (2-\frac{1}{\pi})\log\log|t| + \frac{\log(\zeta(1+r))}{\pi}  +\log(\zeta(r)) + O(1)}{\log(r/\alpha) } \end{equation}
It is clear that we can use a different bounds for $\zeta$, for example using the sharper bounds on $\zeta$ for $r<2/7$ one can improve this (for $r<2/7$) to 
\begin{equation}\label{shorterbound} \frac{\frac{r}{4\pi}\log|t| + (2-\frac{1}{\pi})\log\log|t|  +\frac{\log(\zeta(1+r))}{\pi}  +O(1)}{\log(r/\alpha)}. \end{equation}
The next section focuses on optimizing this bound further.

\subsection{Bounding Integrals Inside the Critical Strip}\label{sec:inside}

The bounds from Equation \eqref{firstbound} are already useful but we wish to refine it. Specifically, it is possible to improve the coefficient $\frac{r}{k\pi}$ by splitting the integral from $\pi/2$ to $3\pi/2$ based on the condition $r\cos(\theta)=1-\sigma_k$ where $\sigma_k$ are chosen lines where we have better bounds. This has been done previously for example in bounding $N(T)$, see for example \cite{BellottiWongFiori2025}.

The idea is to define piecewise functions
 $F_{c,r} : [-\pi,\pi] \rightarrow \mathbb{R}$ such that 
 \[ \log|\zeta(1+ it +re^{i\theta} )| \leq F_{1+ it,r}(\theta) \]
 and then integrate the function $F_{1+ it,r}(\theta) $.

To this end we shall define for $k\geq 0$
\begin{equation} \label{eq:sigmak} \sigma_k =  1-\frac{k+3}{2^{k+3}-2} \qquad  \sigma_{-k} = \frac{k+3}{2^{k+3}-2} \end{equation}
so that these are the vertical lines for which \cite{Yang2024}, combined with Lemma~\ref{lem:zetalessthanhalf} provides bounds.
We then define for $k\geq 0$
\[ v_k = \frac{1}{2^{k+3}-2}  \qquad  v_{-k} = \sigma_k-1/2 + \frac{1}{2^{k+3}-2} \]
which define the corresponding coefficient of $\log|t|$ for the bounds on $\zeta(\sigma_k+it)$.

Define also
\begin{align*}  v_k' &= 1  & v_k'' &= \log(1.546)& k>0\\
v_0'' &= \log(0.611) 
 &  v_k'' & = \log(1.546)+(\sigma_k-1/2)\log|\pi|& k<0
\end{align*}
respectively the coefficients of $\log\log|t|$ and the constant terms.

\begin{remark}\label{rem:altcoefficients}
It is also admissible to set either
\[ v_0'' =\log\left( 0.470795  + \frac{4.04972}{\log|T|} \right) \]
or
\[ v_0=\frac{27}{164},\qquad v_0' = 0,\quad  \text{  and  }\quad v_0'' = \log(66.7). \]
These values are used in all displayed tables.
\end{remark}

Now define
\begin{equation}\label{eq:mk} m_{k} = \frac{v_k-v_{k+1}}{\sigma_k - \sigma_{k+1}} \qquad   b_{k} = v_k - m_{k}\sigma_k \end{equation}
so that for $\sigma\in[\sigma_k,\sigma_{k+1}]$ the formula $m_k\sigma+b_k$ describes the coefficient of $\log|T|$ in the bound on $\log|\zeta(\sigma+it)|$.
Similarly, define
\begin{equation}\label{eq:mkp}m_k' =  \frac{v_k'-v_{k+1}'}{\sigma_k - \sigma_{k+1}} \qquad b_{k}' = v_k' - m_{k}'\sigma_k \end{equation}
and
\begin{equation}\label{eq:mkpp}m_k'' =  \frac{v_k''-v_{k+1}''}{\sigma_k - \sigma_{k+1}} \qquad b_{k}'' = v_k'' - m_{k}''\sigma_k \end{equation}

Now, for each $k$ with $r\geq 1-\sigma_k$ we define
\begin{equation}\label{eq:thetak} \theta_{k,r} = \arcsin((1-\sigma_k)/r) \end{equation}
and define $\theta_{-\infty,r} = \pi/2$.

Now set $K=\inf_{k}(r\geq 1-\sigma_k)$ where $K=-\infty$ is admissible. Finally, we define
\begin{align*} c_{1,r} &= (m_{K-1}+b_{K-1})(\pi/2-\theta_{K,r}) - m_{K-1}\,r\cos(\theta_{K,r})\\
&\qquad+ \sum_{k=K}^{\infty} \left[ (m_k+b_k)(\theta_{k,r}-\theta_{k+1,r}) + m_k\,r\left(\cos(\theta_{k,r})-\cos(\theta_{k+1,r})\right)\right] \\
c_{2,r} & = (m_{K-1}'+b_{K-1}')(\pi/2-\theta_{K,r}) - m_{K-1}'\,r\cos(\theta_{K,r})
\\&\qquad+ \sum_{k=K}^{\infty} \left[ (m_k'+b_k')(\theta_{k,r}-\theta_{k+1,r}) + m_k'\,r\left(\cos(\theta_{k,r})-\cos(\theta_{k+1,r})\right)\right] \\ 
c_{3,r} & = (m_{K-1}''+b_{K-1}'')(\pi/2-\theta_{K,r}) - m_{K-1}''\,r\cos(\theta_{K,r})
\\&\qquad+ \sum_{k=K}^{\infty} \left[ (m_k''+b_k'')(\theta_{k,r}-\theta_{k+1,r}) + m_k''\,r\left(\cos(\theta_{k,r})-\cos(\theta_{k+1,r})\right)\right]
\end{align*}
We note that every sum above is rapidly convergent, even if $K=-\infty$, and can be accurately computed.

The following proposition summarizes the result
\begin{proposition}\label{prop:jensen-easy}
    Suppose  $0<r<1$ and $t>13$. With $c_{1,r}$, $c_{2,r}$ and $c_{3,r}$ as defined above we have
    \begin{align*} \frac{1}{2\pi}\int_{ \pi/2}^{3\pi/2} \log|\zeta(1+it+re^{i\theta})|\,d\theta
    \leq& \frac{c_{1,r}}{\pi}\log|t|  + \frac{c_{2,r}}{\pi}\log\log|t| + \frac{c_{3,r}}{\pi}  \\& +\Ostar\!\left(\frac{5}{6(t-1)} + \frac{1}{(t-1)^2} + \frac{579}{8(t-1)^2\log(t-1)}\right)
    \end{align*}
    Some values for $c_{1,r}$, $c_{2,r}$ and $c_{3,r}$ are given in Table~\ref{tab:crvalues}.
\end{proposition}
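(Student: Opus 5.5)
We bound the integrand pointwise by a piecewise-linear interpolation of the vertical-line bounds, then integrate term by term.

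\textbf{Setup.} On $\theta\in[\pi/2,3\pi/2]$ write $s=1+it+re^{i\theta}$. Then $\sigma=1+r\cos\theta\in[1-r,1]$ and $\Im(s)=t+r\sin\theta\in[t-1,t+1]$. By symmetry of the integrand's bound under $\theta\mapsto 2\pi-\theta$ (after re-parametrizing by $\phi=\theta-\pi$), it suffices to integrate over one half and double. The angles $\theta_{k,r}$ are exactly where the circle crosses the lines $\sigma=\sigma_k$, so the arc splits into pieces with $\sigma\in[\sigma_{k+1},\sigma_k]$. The last piece runs from $\theta_{K,r}$ to the point of minimal $\sigma=1-r$, and is handled by the segment with index $K-1$.

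\textbf{Step 1: pointwise bound.} On each piece use convexity in $\sigma$, via Phragm\'en--Lindel\"of in the form already used for these line-interpolated bounds (as in \cite{Fiori2026}, \cite{BellottiWongFiori2025}). Combine the bounds of \cite{Yang2024} on $\sigma=\sigma_k$ (for $k\ge0$) with Lemma~\ref{lem:zetalessthanhalf} (for $k<0$, through the functional equation). This gives
\[
\log|\zeta(\sigma+iT)|\le (m_k\sigma+b_k)\log T+(m_k'\sigma+b_k')\log\log T+(m_k''\sigma+b_k'') \qquad (\sigma_{k+1}\le\sigma\le\sigma_k).
\]
Here $T=|\Im s|$, and the coefficients are the linear interpolants of $(v_k,v_k',v_k'')$ from \eqref{eq:mk}--\eqref{eq:mkpp}. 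The hypothesis $t>13$ is there so that $T\ge t-1$ exceeds the thresholds where these line bounds are valid. Remark~\ref{rem:altcoefficients} supplies alternative admissible $v_0$-data.

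\textbf{Step 2: integration.} Substitute $\sigma=1-r\sin\phi$ with $\phi\in[0,\pi/2]$, so that $\theta_{k,r}=\arcsin((1-\sigma_k)/r)$. The factor $m_k\sigma+b_k$ becomes $(m_k+b_k)-m_kr\sin\phi$. Integrating over $[\theta_{k+1,r},\theta_{k,r}]$ gives
\[
(m_k+b_k)(\theta_{k,r}-\theta_{k+1,r})+m_kr\bigl(\cos\theta_{k,r}-\cos\theta_{k+1,r}\bigr).
\]
The final segment $[\theta_{K,r},\pi/2]$ gives the $K-1$ term. Doubling and dividing by $2\pi$ yields exactly $c_{j,r}/\pi$. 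The series converge rapidly because $v_k\approx 2^{-k-3}$ and $\theta_{k,r}\to0$.

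\textbf{Step 3: error term (the main obstacle).} We must replace $\log T$ and $\log\log T$ by $\log t$ and $\log\log t$. Since $|T-t|\le r\sin\phi<1$, we have
\[
\log T\le\log t+\frac{1}{t-1},\qquad \log\log T\le\log\log t+\frac{1}{(t-1)\log(t-1)}.
\]
The key task is to bound the total weight $\frac{1}{2\pi}\int|\text{coefficient}|\,d\theta$ times these increments. Bounding the $\log t$ coefficient by $\sup(m_k\sigma+b_k)\le 1/2+\ldots$ times $\tfrac12$ should give the $\frac{5}{6(t-1)}$-type term. The $\log\log$ and constant parts yield the $(t-1)^{-2}$ terms, presumably including a second-order Taylor term of $\log$. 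Matching the precise constants $5/6$ and $579/8$ requires carefully tracking the sign of the $v''$ coefficients, which can be negative (note $\log 0.611<0$), so that those terms do not need correcting. I expect this bookkeeping to be the fiddliest part.
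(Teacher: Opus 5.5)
Your overall route is the paper's. You bound $\log|\zeta(1+it+re^{i\theta})|$ by the piecewise-linear-in-$\sigma$ interpolation of the line bounds, which is Proposition~\ref{prop:interpolation}, mirrored through Lemma~\ref{lem:zetalessthanhalf} below $\sigma=1/2$. You then integrate arc by arc with $\sigma=1-r\sin\phi$ to get exactly $c_{j,r}/\pi$, and control the height shift using that the $\log$ and $\log\log$ coefficients are nonnegative. There are two small slips. First, the arc pieces are $\sigma\in[\sigma_k,\sigma_{k+1}]$, since the $\sigma_k$ increase with $k$. Second, the bound at height $T=t+r\sin\theta$ is not symmetric under $\theta\mapsto 2\pi-\theta$. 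So you should first replace $\log T$ and $\log\log T$ by $\log(t+r)$ and $\log\log(t+r)$, as the paper does, and only then double.

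The real gap is in the error term. Your Step 1 inequality has no remainder, but the tools you invoke do not give that. The Phragm\'en--Lindel\"of interpolation is run with analytic surrogates for $|t|$ and $\log|t|$. So Proposition~\ref{prop:interpolation} gives the linear-in-$\sigma$ bound only up to $\Ostar\bigl(\frac{3}{2T^2}+\frac{579}{4T^2\log T}\bigr)$. Lemma~\ref{lem:zetalessthanhalf} adds a further $\Ostar\bigl((\frac{\sqrt3}{216}+\frac{0.17}{T})T^{-2}\bigr)$ when $\sigma<1/2$. These remainders are exactly where the $(t-1)^{-2}$ terms and the constant $579/8$ in the statement come from. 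Your guess that they arise from the $\log\log$ and constant parts through a second-order Taylor term is wrong. The constant parts do not depend on $T$ (apart from the alternative $v_0''$), and the $\log\log$ shift is first order.

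The repair is to carry $\Ostar\bigl(\frac{2}{T^2}+\frac{579}{4T^2\log T}\bigr)$ in Step 1, with $T\ge t-r>t-1>12$. Integrating over an arc of length $\pi$ and dividing by $2\pi$ halves this, giving $\frac{1}{(t-1)^2}+\frac{579}{8(t-1)^2\log(t-1)}$. Your first-order shifts then contribute at most $\bigl(\frac14+\frac{1}{2\log 12}\bigr)\frac{1}{t-1}$. This uses that the $\log T$ coefficient is below $1/2$ and the $\log\log T$ coefficient is at most $1$, each with total weight $1/2$. If you use the $T$-dependent $v_0''$ of Remark~\ref{rem:altcoefficients}, add at most $0.16/(t-1)$: that term decreases in $T$ and so needs its own shift correction. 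All of this fits inside $\frac{5}{6(t-1)}$.
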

\begin{proof}
   The key is that by construction between $\theta_{k+1,r}+\pi/2$ and $\theta_{k,r}+\pi/2$ that
   \begin{align*} F_{1+it,r}(\theta) &= \bigl((m_k+b_k)-m_kr\sin(\theta-\pi/2)\bigr)\log|t+r| \\&\qquad+ \bigl((m_k'+b_k')-m_k'r\sin(\theta-\pi/2)\bigr)\log\log|t+r|
   \\&\qquad+ \bigl((m_k''+b_k'')-m_k''r\sin(\theta-\pi/2)\bigr)
   \\&\qquad + \Ostar\!\left(\frac{2}{(t-r)^2} + \frac{579}{4(t-r)^2\log|t-r|}\right)
   \end{align*}
   bounds $\log|\zeta(1+ it +re^{i\theta})|$.

   The $\Ostar$ term is obtained by carefully considering the Taylor expansions of $\log|1+r/t|$ and $\log\left|1+\frac{\log|1+r/t|}{\log|t|}\right|$ in $\frac{r}{t}$ and telescoping the defining sums $c_{1,r}$ and $c_{2,r}$. 
\end{proof}
From what we have done already this would lead to the bound
    \begin{align*} N(1+it,\alpha) &< \frac{1}{\log(r/\alpha)}\Biggl( \frac{c_{1,r}}{\pi}\log|t| + \left(\frac{c_{2,r}}{\pi}-\frac{1}{\pi}\right)\log\log|t|
    \\& \qquad\qquad + \frac{1}{\pi}\log(\zeta(r)) + \frac{c_{3,r}}{\pi} +  \log(29.388) \Biggr).
    \end{align*}

\begin{remark}
For $r=1/2$ this reduces the value from $\frac{1}{6\pi}$ to $\frac{0.1480612309}{\pi}$, other values are in Table~\ref{tab:crvalues}.
\end{remark}

\begin{remark}
If one wants to bound the number of zeros in a circle of radius $\alpha$ using \eqref{firstbound} the best choice of $r$, asymptotically, would be $r=e\alpha$ as this minimizes the ratio $\frac{s\alpha}{\log(s)}$ which is the coefficient of $\log|t|$.

The optimal choice for $r$ when using $c_r$ is more complex as the derivative of $c_r$ with respect to $r$ is complex, see Table~\ref{tab:crvalues}.
\end{remark}

\begin{remark}
Extending the result past $r>1$ or for $s=c+it$ with $c>1$ can be done and is used in some applications. Typically this is done to decrease the coefficient of $\log\log|t|$ by moving the $-\log(\zeta(c+it)$ term far enough to the right so that it becomes $O(\log(\zeta(c)))$. This will come at the expense of increasing the coefficient of $\log|t|$. 
\end{remark}

Using instead the Richert type bound we get instead
\begin{proposition}\label{prop:Richert}
Assume that $r>0$ and that for $t\in [T-r,T+r]$ and $(1-\sigma)<r$ we have a bound
\[ \log|\zeta(\sigma+it) | < B(1-\sigma)^{3/2}\log|T| + \frac{2}{3} \log\log|T| + A \]
then
\begin{equation}\label{secondbound}
\frac{1}{2\pi}\int_{\pi/2}^{3\pi/2} \log|\zeta(1+it+re^{i\theta})|\,d\theta \leq 
\frac{E_{1/2} Br^{3/2}}{\pi}\log|T| + \frac{1}{3}\log\log|T| + \frac{A}{2}
\end{equation}
where 
\[ E_{1/2} = \int_0^{\pi/2} \cos(\theta)^{3/2}  =  \frac{1}{3} \int_0^{\pi/2} \frac{1}{\sqrt{1-2\sin^2(\theta/2)}} \sim 0.874019\ldots \]
comes from an elliptic integral of the first kind.
\end{proposition}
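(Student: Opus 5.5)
The plan is to insert the assumed bound directly into the integrand and integrate term by term. On the arc $\theta\in[\pi/2,3\pi/2]$ the point $s=1+it+re^{i\theta}$ has real part $\sigma=1+r\cos\theta\le 1$, so $1-\sigma=-r\cos\theta=r|\cos\theta|\in[0,r]$. Its imaginary part is $t+r\sin\theta\in[t-r,t+r]$. Here I read $t$ in the statement as the centre $T$, so these imaginary parts lie in the window where the hypothesis is assumed.

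On this arc the hypothesis gives
\[ \log|\zeta(1+it+re^{i\theta})| < B r^{3/2}|\cos\theta|^{3/2}\log|T| + \tfrac{2}{3}\log\log|T| + A. \]
At the endpoints $\theta=\pi/2,3\pi/2$ we have $1-\sigma=0$, which is either covered by the hypothesis or is a measure-zero set and so irrelevant to the integral.

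Next I integrate each term and multiply by $\frac{1}{2\pi}$.

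The constant and $\log\log$ terms integrate over an interval of length $\pi$. This gives $\frac{1}{2\pi}\cdot\pi\cdot\frac23\log\log|T| = \frac13\log\log|T|$ and $\frac{A}{2}$.

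For the main term, substitute $\theta=\pi+\phi$ with $\phi\in[-\pi/2,\pi/2]$, so that $|\cos\theta|=\cos\phi$. By symmetry
\[ \int_{\pi/2}^{3\pi/2}|\cos\theta|^{3/2}\,d\theta = 2\int_0^{\pi/2}\cos^{3/2}\phi\,d\phi = 2E_{1/2}. \]
Multiplying by $\frac{1}{2\pi}$ gives $\frac{E_{1/2}Br^{3/2}}{\pi}\log|T|$, which is exactly the stated coefficient.

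It remains to justify the displayed identities for $E_{1/2}$.
\begin{itemize}
\item Write $\cos\phi=1-2\sin^2(\phi/2)$ and integrate by parts, or equivalently use the Beta-function value $\int_0^{\pi/2}\cos^{3/2}\phi\,d\phi=\frac{\sqrt{\pi}\,\Gamma(5/4)}{2\Gamma(7/4)}\approx 0.8740$.
\item Relate this to the complete elliptic integral $\frac13\int_0^{\pi/2}(1-2\sin^2(\phi/2))^{-1/2}\,d\phi$ via the standard reduction formula $\int\cos^{n}=\frac{n-1}{n}\int\cos^{n-2}$ with $n=3/2$. This gives $\int_0^{\pi/2}\cos^{3/2}=\frac13\int_0^{\pi/2}\cos^{-1/2}$, because the boundary term $\sin\phi\cos^{1/2}\phi$ vanishes at both ends.
\end{itemize}

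The main obstacle is this reduction step. The $n=3/2$ case integrates against the singular weight $\cos^{-1/2}\phi$ at $\pi/2$, so one has to check that the boundary term really vanishes and that the integral converges. It does, since the singularity is $\sim(\pi/2-\phi)^{-1/2}$. The rest is routine.
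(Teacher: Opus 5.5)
Your proof is correct, including the computations $\int_{\pi/2}^{3\pi/2}|\cos\theta|^{3/2}\,d\theta=2E_{1/2}$ and $\int_0^{\pi/2}\cos^{3/2}\phi\,d\phi=\frac13\int_0^{\pi/2}\cos^{-1/2}\phi\,d\phi$ via the reduction formula. It is the direct substitute-and-integrate argument the paper intends: the paper states this proposition without a written proof, in parallel with Proposition~\ref{prop:jensen-easy}.

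One small correction: the endpoints $\theta=\pi/2,3\pi/2$ are covered by the hypothesis, since there $1-\sigma=0<r$. The point actually excluded by the strict condition $(1-\sigma)<r$ is $\theta=\pi$, where $1-\sigma=r$, and your measure-zero remark disposes of it equally well.
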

\begin{remark}
For \eqref{secondbound} with a fixed $\alpha$ the best choice for $r$ is $r=e^{2/3}\alpha$. This would give the bound
\[ N(1+it,\alpha) < 
\frac{3}{2}\left(\frac{ E_{1/2}e B \alpha^{3/2} }{\pi} \log|t| + \left(1-\frac{1}{2\pi} + \frac{1}{3}\right)\log\log|t| + O_r(1) \right)
\]
For example with $r=\frac{1}{2e^{1/3}}$ and $\alpha=\frac{1}{2e}$ we have a lead term $\frac{0.623846}{2\pi}\log|t|$
which is weaker than the previous result.  
Indeed, to  see an improvement you would need $\alpha$ to be quite small.

We do however note that the constant $\frac{3 e}{2 \pi}E_{1/2} \sim 1.134375 < 1.3478$, which was the constant obtained by Ford in \cite[Lemma~4.2]{Ford2002,Ford2019}.
Our advantage over Ford comes entirely from our ability to work at $1+it$, whereas he must work at $1+0.6421\alpha + it$ in order to handle a $\frac{\zeta'}{\zeta}$ term.

Note, although working on the line $1+it$ is good for our main term, it would be worthwhile to explore the application of these methods along $1+\eta + it$ which may improve results for smaller values of $t$.
\end{remark}

\begin{remark}
    We make some final remarks about minor improvements one could make to the main term integral
    \[\int_{\pi/2}^{3\pi/2} \log|\zeta(1+it+re^{i\theta})|\,d\theta . \]
\begin{enumerate}
    \item In principle it would be best to pick the critical-line bound based on the height $t$ at which one is working.
    The critical-line bound could give small savings on the main term and the secondary $\log\log$ term, at the expense of constant terms.
    
    \item We have been wasteful in considering $k=3\cdots\infty$. For small $t$ at some point (for instance when you enter the zero-free region) these bounds are actually worse than other available bounds. For large $t$ they will (eventually) be worse than the Richter bound.

    For smaller $t$, if one is using $\alpha$ that are constant size, then the savings here are modest since the width of the region where there would be an improvement is quite thin. 
    
    \item For $r$ very small, say $O(\log\log|t|/\log|t|)$ or  $O(\log\log|t|^{2/3}/\log|t|^{2/3})$, it should be noted that the $\log\log|t|$ terms become the dominant error term. On this scale other methods are likely to be superior to these.
\end{enumerate}
    
\end{remark}

\subsection{Integration outside the critical strip}\label{sec:refinement-integration}
We now focus our attention on optimizing the secondary error terms.

For the bounds in Equation \eqref{firstbound} we bounded
\[ \int_{-\pi/2}^{\pi/2}  \log|\zeta(1+it+re^{i\theta})|\,d\theta  \]
by using the bounds on $|\zeta(s)|$ outside the critical strip.

The bound we obtain outside the critical strip is far from optimal. In particular, for $r$ of constant size the integral can be bounded independent of $t$.

\begin{proposition}\label{prop:integraloutside}
For $t>2$ and $0<r<t$, we have that
\[  \frac{1}{2\pi}\int_{-\pi/2}^{\pi/2}  \log|\zeta(1+it+re^{i\theta})|\,d\theta
\leq c_{4,r} \]
where 
\[ c_{4,r} = \frac{1}{2}\log(\zeta(1+r)) -\frac{r}{\pi}\int_{0}^{1} \frac{\zeta'(1+ru)}{\zeta(1+ru)}\arcsin(u)\,du .\]
Moreover, if $r\leq 2$, we have
\[ c_{4,r} <  \frac{1}{2}\log(\zeta(1+r)) + \frac{\log(2)}{2} - \frac{(\pi-2)r}{2\pi}\gamma  + \frac{r^2(\gamma^2+2\gamma_1)}{8} \]
where $\gamma$ is the Euler-Mascheroni constant and $\gamma_1$ is a Stieltjes constant.

Table~\ref{tab:crvalues} provides examples of computed values for $c_{4,r}$. 
\end{proposition}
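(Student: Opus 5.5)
The plan is to bound the integrand pointwise by a function of $\theta$ that does not depend on $t$, and then evaluate the resulting one-dimensional integral exactly through an integration by parts.

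\emph{Pointwise bound.} For $\theta\in(-\pi/2,\pi/2)$ the point $s=1+it+re^{i\theta}$ has real part $\sigma=1+r\cos\theta>1$. The Dirichlet series $\log\zeta(s)=\sum_{n\ge2}\frac{\Lambda(n)}{\log n}n^{-s}$ has nonnegative coefficients, so
\[
\log|\zeta(s)|=\Re\log\zeta(s)\le \log\zeta(\sigma)=\log\zeta(1+r\cos\theta).
\]
At the endpoints $\theta=\pm\pi/2$ we have $\sigma=1$ and imaginary part $t\pm r\neq0$, because $0<r<t$; the bound then diverges only logarithmically and remains integrable. Hence
\[
\frac{1}{2\pi}\int_{-\pi/2}^{\pi/2}\log|\zeta(1+it+re^{i\theta})|\,d\theta\le\frac1\pi\int_0^{\pi/2}\log\zeta(1+r\cos\theta)\,d\theta .
\]

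\emph{Evaluating the integral.} Set $u=\cos\theta$ and $f(u)=\log\zeta(1+ru)$. The right-hand side becomes $\frac1\pi\int_0^1 f(u)\,\frac{du}{\sqrt{1-u^2}}=\frac1\pi\int_0^1 f(u)\,d(\arcsin u)$. We integrate by parts. The boundary term at $u=0$ vanishes, since $f(u)\sim-\log(ru)$ and $\arcsin(u)\log u\to0$. The term at $u=1$ is $\frac{\pi}{2}\log\zeta(1+r)$. Using $f'(u)=r\,\frac{\zeta'}{\zeta}(1+ru)$, this gives exactly $c_{4,r}$, which proves the first claim.

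\emph{The explicit bound for $r\le2$.} Here we need the inequality
\[
-\frac{\zeta'}{\zeta}(1+x)\le \frac1x-\gamma+(\gamma^2+2\gamma_1)x,\qquad 0<x\le2,
\]
which is the Laurent expansion truncated after its linear term. Substituting it with $x=ru$ and using the three elementary integrals
\[
\int_0^1\frac{\arcsin u}{u}\,du=\frac{\pi}{2}\log2,\qquad \int_0^1\arcsin u\,du=\frac{\pi}{2}-1,\qquad \int_0^1 u\arcsin u\,du=\frac{\pi}{8},
\]
the three terms become $\frac{\log2}{2}$, $-\frac{(\pi-2)r\gamma}{2\pi}$ and $\frac{r^2(\gamma^2+2\gamma_1)}{8}$, as stated.

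The main obstacle is proving the inequality for $-\zeta'/\zeta$ above rigorously on all of $(0,2]$.
\begin{itemize}
\item Near $x=0$, I would control the remainder with the next Laurent coefficient together with an explicit bound on the higher Stieltjes constants.
\item On the remaining compact range, say $[0.1,2]$, I would verify the inequality by interval arithmetic, using the monotonicity of $-\zeta'/\zeta$ on $(1,\infty)$ to reduce the check to finitely many subintervals.
\end{itemize}
As a first attempt before the numerics, I would try to show that $g(x)=-\frac{\zeta'}{\zeta}(1+x)-\frac1x+\gamma$ is concave. Since $g(0)=0$ and $g'(0)=\gamma^2+2\gamma_1$, concavity would give $g(x)\le(\gamma^2+2\gamma_1)x$ for all $x>0$ at once.
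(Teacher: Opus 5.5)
Your proof follows the paper's argument step for step: the pointwise bound $\log|\zeta(s)|\le\log\zeta(\Re s)$, then the substitution $u=\cos\theta$ with integration by parts, then the same three $\arcsin$ integrals. The inequality for $-\zeta'/\zeta$ that you flag as the main obstacle is the paper's Lemma~\ref{lemma:logderzetabound} with $N=1$, which the paper proves by bounding the Laurent coefficients via Cauchy's estimate on $|z|=7$ after subtracting the trivial-zero poles, followed by an alternating-series argument; your concavity idea also works, since the Hadamard product gives $g''(x)=-2\sum_{k\ge1}(2k+1+x)^{-3}-2\sum_\rho(1+x-\rho)^{-3}$, and on $[0,2]$ the first term is at most $-0.029$ while the second is bounded in absolute value by $2\sum_\rho|\Im\rho|^{-3}<0.01$.
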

\begin{proof}
We have for $\theta\in (-\pi/2,\pi/2)$ that 
\[ \log|\zeta(1+it+re^{i\theta})|\,d\theta  <  \log(\zeta(1+r\cos(\theta))) \]
and hence
\[ \frac{1}{2\pi}\int_{-\pi/2}^{\pi/2}  \log|\zeta(1+it+re^{i\theta})|\,d\theta < 
\frac{1}{\pi}\int_{0}^{\pi/2} \log(\zeta(1+r\cos(\theta)))\,d\theta
\]
 provided this integral converges, which one can verify that it does (say by an integral comparison test).
Through the substitution $u=\cos(\theta)$ we have
\[ \frac{1}{\pi}\int_{0}^{\pi/2} \log(\zeta(1+r\cos(\theta)))\,d\theta
= \frac{1}{\pi}\int_{0}^{1} \frac{\log(\zeta(1+ru))}{\sqrt{1-u^2}}du.
\]
And integration by parts yields
\begin{align*}
\frac{1}{\pi}\int_{0}^{1} \frac{\log(\zeta(1+ru))}{\sqrt{1-u^2}}du &= 
\frac{1}{\pi}\lim_{u\rightarrow 1-} \log(\zeta(1+ru))\arcsin(u)
\\&\qquad -
\frac{1}{\pi}\lim_{u\rightarrow 0+} \log(\zeta(1+ru))\arcsin(u)\\&\qquad -
\frac{1}{\pi}\int_{0}^{1} r\frac{\zeta'(1+ru)}{\zeta(1+ru)}\arcsin(u)\,du \\
&= \frac{1}{2}\log(\zeta(1+r)) -\frac{r}{\pi}\int_{0}^{1} \frac{\zeta'}{\zeta}(1+ru)\arcsin(u)\,du.
\end{align*}
Although $\frac{\zeta'}{\zeta}$ has a simple pole at $1$, $\arcsin$ has a simple zero at $0$, so that
$\frac{\zeta'}{\zeta}(1+ru))\arcsin(u)$ extends to an analytic function at $0$, and hence $\frac{\zeta'}{\zeta}(1+ru))\arcsin(u)$ is continuous on $[0,1]$. From which it follows that the integral exists, and we have the first claim.

By Lemma~\ref{lemma:logderzetabound}  we have the bound
\[ -\frac{\zeta'(1+z)}{\zeta(1+z)} < \frac{1}{z} - \gamma +  (\gamma^2+2\gamma_1)z \]
where $\gamma$ is the Euler-Mascheroni constant and $\gamma_1$ is a Stieltjes constant, this gives
\begin{align*} -\frac{r}{\pi}\int_{0}^{1} &\frac{\zeta'}{\zeta}(1+ru))\arcsin(u)\,du \\\qquad\qquad&< \frac{1}{\pi}\int_{0}^{1} \frac{\arcsin(u)}{u} - r\gamma\arcsin(u) + (\gamma^2+2\gamma_1)r^2u\arcsin(u)\,du \\
&= \frac{\log(2)}{2} - \frac{(\pi-2)r}{2\pi}\gamma  + \frac{r^2(\gamma^2+2\gamma_1)}{8}\qedhere
\end{align*}
\end{proof}

We now state our final Theorems on bounds for circular regions, the first being based on sub-convexity bounds for $\zeta$ and the second based on Richert bounds.
\begin{theorem}\label{thm:circularregions1}
Let $1>r>\alpha$ and $t>13$ then
\begin{align*} N(1+it,\alpha) \leq&
\frac{C_{1,\alpha,r}}{\pi} \log|t| +
{C_{2,\alpha,r}} \log\log|t| +
C_{3,\alpha,r} \\&\quad+  \Ostar\!\left(\frac{1}{\log(r/\alpha)}\left(\frac{5}{6(t-1)} + \frac{1}{(t-1)^2} + \frac{579}{8(t-1)^2\log(t-1)}\right)\right)
\end{align*}
where
\begin{align*}
 C_{1,\alpha,r} &= \frac{c_{1,r}}{\log(r/\alpha)}\\
 C_{2,\alpha,r} &= \frac{c_{2,r}}{\log(r/\alpha)\pi} + \frac{1}{\log(r/\alpha)}\\
 C_{3,\alpha,r} &= \frac{c_{3,r}}{\pi\log(r/\alpha)} + \frac{c_{4,r}}{\log(r/\alpha)} + \frac{\log(29.388)}{\log(r/\alpha)}.
 \end{align*}
Table~\ref{tab:CRvalues} contains computed values of the constants where we optimized $r$ so as to minimize $C_{1,\alpha,r}$. Table~\ref{tab:crvalues2} contains the associated values for $c_{i,r}$.
\end{theorem}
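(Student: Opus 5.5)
The plan is to start from Proposition~\ref{prop:jensenbound} with $s=1+it$ and split the Jensen integral over $[-\pi/2,3\pi/2]$ into the part outside the critical strip, $\theta\in[-\pi/2,\pi/2]$, and the part inside, $\theta\in[\pi/2,3\pi/2]$. This gives
\[ N(1+it,\alpha)\le \frac{1}{\log(r/\alpha)}\left(\frac{1}{2\pi}\int_{-\pi/2}^{\pi/2}\log|\zeta(1+it+re^{i\theta})|\,d\theta+\frac{1}{2\pi}\int_{\pi/2}^{3\pi/2}\log|\zeta(1+it+re^{i\theta})|\,d\theta-\log|\zeta(1+it)|\right). \]
Each of the three pieces will then be bounded separately by an earlier result or standard estimate, and the results added.

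\begin{enumerate}
\item \textbf{Outside the strip.} Proposition~\ref{prop:integraloutside} bounds the first integral by $c_{4,r}$. Its hypotheses are $t>2$ and $0<r<t$, which hold since $t>13$ and $r<1$. This term accounts for the $c_{4,r}/\log(r/\alpha)$ contribution to $C_{3,\alpha,r}$.
\item \textbf{Inside the strip.} Proposition~\ref{prop:jensen-easy} bounds the second integral by
\[ \frac{c_{1,r}}{\pi}\log|t|+\frac{c_{2,r}}{\pi}\log\log|t|+\frac{c_{3,r}}{\pi}, \]
plus the stated $\Ostar$ term. After dividing by $\log(r/\alpha)$, this yields $C_{1,\alpha,r}$, the $c_{2,r}/(\pi\log(r/\alpha))$ part of $C_{2,\alpha,r}$, the $c_{3,r}/(\pi\log(r/\alpha))$ part of $C_{3,\alpha,r}$, and exactly the displayed $\Ostar$ error.
\item \textbf{The central term.} We need an explicit lower bound on $|\zeta(1+it)|$ valid for $t>13$, of the form
\[ -\log|\zeta(1+it)|\le \log\log|t|+\log(29.388). \]
This is an explicit estimate $1/|\zeta(1+it)|\le 29.388\log t$, which I would take from the background bounds in the Appendix~\ref{sec:background}. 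It produces the $1/\log(r/\alpha)$ part of $C_{2,\alpha,r}$ and the $\log(29.388)/\log(r/\alpha)$ part of $C_{3,\alpha,r}$.
\end{enumerate}

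Note that the intermediate display after Proposition~\ref{prop:jensen-easy} also contains a $\frac{1}{\pi}\log\zeta(r)$ term. In the theorem this is replaced by the sharper $c_{4,r}$, so that term must not be added a second time.

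The main obstacle is the third step. The bound on $1/|\zeta(1+it)|$ must hold with the constant $29.388$ uniformly down to $t=13$, and with $\log\log|t|$ rather than $\log\log|t|$ plus a correction. I would first try to cite an existing explicit bound of the type $1/|\zeta(1+it)|\le c\log t$ for all $t$ at least some small threshold. If its threshold is above $13$, I would handle the range up to that threshold by a direct numerical check of $|\zeta(1+it)|\log t$.

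The remaining checks are routine. The $\Ostar$ terms carry over unchanged, only divided by $\log(r/\alpha)$. The constants $C_{i,\alpha,r}$ then match the statement term by term.
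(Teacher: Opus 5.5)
Your proposal is correct and is essentially the paper's own proof. You use the same steps: Jensen's bound at $s=1+it$, Proposition~\ref{prop:jensen-easy} for the half of the circle inside the strip, Proposition~\ref{prop:integraloutside} for the half outside (giving $c_{4,r}$ in place of the cruder $\frac{1}{\pi}\log\zeta(r)$ term), and the estimate $1/|\zeta(1+it)|\le 29.388\log t$. The one correction is that this last bound is not in the Appendix; the paper cites it directly from Leong's explicit bounds on the reciprocal of $\zeta(1+it)$.
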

\begin{proof}
Jensen's formula gives us that
\[ N(1+it,\alpha)  <   \frac{1}{\log(r/\alpha)}\left(  \frac{1}{2\pi}\int_{-\pi/2}^{3\pi/2} \log|\zeta(1+it+re^{i\theta})|\,d\theta - \log|\zeta(1+it)|   \right) \]
The result now follows from a direct application of Propositions~\ref{prop:jensen-easy}, \ref{prop:integraloutside} and the bound
\[ -\log|\zeta(1+it)| < \log\log|t| + \log(29.388) \]
from \cite{Leong2024}.  
\end{proof}

To apply the above for more general values of $\alpha$ than are in the tables we give also
\begin{proposition}\label{prop:Cscaling}
Let $1>r>\alpha_0>\alpha$ and $r\geq e\alpha_0$ then
\[  C_{1,\alpha,(\alpha/\alpha_0)r} \leq C_{1,\alpha_0,r}\frac{\alpha}{\alpha_0}\qquad C_{2,\alpha,(\alpha/\alpha_0)r} \leq \frac{3}{2\log(r/\alpha_0)} .\]
\end{proposition}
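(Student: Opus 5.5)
The plan is to work directly with the definitions of $C_{1,\alpha,r}$ and $C_{2,\alpha,r}$ from Theorem~\ref{thm:circularregions1}. The scaling in the statement keeps the ratio fixed: setting $\lambda=\alpha/\alpha_0\in(0,1)$ and $r'=\lambda r$, we have $r'/\alpha=r/\alpha_0$. Hence
\[ C_{1,\alpha,r'}=\frac{c_{1,r'}}{\log(r/\alpha_0)},\qquad C_{2,\alpha,r'}=\frac{c_{2,r'}}{\pi\log(r/\alpha_0)}+\frac{1}{\log(r/\alpha_0)}. \]
So the first claim reduces to the sublinear scaling $c_{1,\lambda r}\le \lambda c_{1,r}$, and the second reduces to the uniform bound $c_{2,r'}\le \pi/2$.

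For the first claim I will rewrite $c_{1,r}$ as an integral rather than a telescoped sum. By construction (compare the proof of Proposition~\ref{prop:jensen-easy}), $c_{1,r}/\pi$ equals $\frac{1}{2\pi}\int_{\pi/2}^{3\pi/2}\mu(-r\cos\theta)\,d\theta$. Here $\mu(x)$ is the piecewise linear function of $x=1-\sigma$ that interpolates the values $v_k$ at the points $x=1-\sigma_k$, with slope $-m_k$ on each piece, and it satisfies $\mu(0)=0$ in the limit $k\to\infty$. It then suffices to show that $\mu(x)/x$ is nondecreasing on $(0,1)$. Given that, $\mu(\lambda x)\le\lambda\mu(x)$ for every $x$, and integrating in $\theta$ gives $c_{1,\lambda r}\le\lambda c_{1,r}$.

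The monotonicity of $\mu(x)/x$ follows from convexity of $\mu$ together with $\mu(0)=0$. Convexity of a piecewise linear interpolant is equivalent to the slopes $-m_k$ increasing as $x$ increases. I expect this verification to be the main obstacle. For $k\ge0$ the points are $(x_k,v_k)=\bigl(\tfrac{k+3}{2^{k+3}-2},\tfrac{1}{2^{k+3}-2}\bigr)$, and the slopes $(v_k-v_{k+1})/(x_k-x_{k+1})$ can be computed explicitly and shown to be monotone by elementary algebra in $2^k$. For $k<0$ one has $v_{-k}=\sigma_k-\tfrac12+v_k$, which is the reflection produced by the functional equation; there I would verify the slope comparisons directly, including the junction at $\sigma=1/2$. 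One must also check the alternative admissible value $v_0=27/164$ from Remark~\ref{rem:altcoefficients}. If convexity fails at a single junction, I would instead verify the weaker statement that $\mu(x)/x$ is nondecreasing at the finitely many relevant nodes, which is all the argument needs.

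The second claim is easy. The $\log\log|t|$ coefficient function interpolates the values $v'_k\in\{0,1\}$, so it lies in $[0,1]$. Therefore
\[ \frac{c_{2,r'}}{\pi}=\frac{1}{2\pi}\int_{\pi/2}^{3\pi/2}\mu'(-r'\cos\theta)\,d\theta\le\frac12, \]
and hence $C_{2,\alpha,r'}\le\frac{1/2+1}{\log(r/\alpha_0)}=\frac{3}{2\log(r/\alpha_0)}$. The hypothesis $r\ge e\alpha_0$ guarantees $\log(r/\alpha_0)\ge1$. It is not essential to the inequality, but it keeps the bound meaningful, and it ensures $r'<1$ and $r'>\alpha$, so that Theorem~\ref{thm:circularregions1} applies with $r'$.
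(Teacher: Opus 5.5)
Your proof is correct. It follows the paper's overall reduction: since $r'/\alpha=r/\alpha_0$, both claims reduce to $c_{1,\lambda r}\le\lambda c_{1,r}$ and $c_{2,r'}\le\pi/2$, and your treatment of the second claim is exactly the paper's ``telescoping'' remark.

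The difference lies in how you justify the scaling of $c_{1,r}$. The paper asserts $\frac{d}{dr}\frac{c_{1,r}}{r}>0$ and argues ``by inspection'' on the telescoped sum. It uses $m_k+b_k\le 0$, plus a heuristic that the weights $\cos\theta_{k+1,r}-\cos\theta_{k,r}$ shift toward lower $k$ as $r$ grows. You instead write $c_{1,r}=\int_0^{\pi/2}\mu(r\sin\phi)\,d\phi$ and prove the pointwise inequality $\mu(\lambda x)\le\lambda\mu(x)$. This makes the step rigorous and avoids differentiating in $r$ altogether.

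Both arguments rest on the same fact. For a continuous piecewise linear $\mu$ with $\mu(0)=0$, the paper's condition $m_k+b_k\le 0$ says that every piece has nonpositive intercept at $x=0$. That is exactly the statement that $\mu(x)/x$ is nondecreasing. So convexity is more than you need, and your fallback of comparing $\mu(x)/x$ at consecutive nodes is in fact equivalent.

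Convexity does hold, for both admissible values of $v_0$:
\begin{itemize}
\item For $x<1/2$ the slopes in $x$ are $1/(k'-1+2^{1-k'})$ for $k'\ge 3$, which increase with $x$. The $k'=3$ slope $4/9$ becomes $107/246$ when $v_0=27/164$.
\item The functional-equation reflection makes the slopes on $x>1/2$ equal to $1$ minus these, giving $5/9$ (respectively $139/246$) just right of $x=1/2$.
\end{itemize}

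One small correction: $\alpha<r'<1$ already follows from $\alpha_0<r<1$. The hypothesis $r\ge e\alpha_0$ is not used anywhere, and the paper's proof does not use it either.
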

\begin{proof}
The claim about $C_{1,\alpha,(\alpha/\alpha_0)r}$ is a consequence of the claim
\[  c_{1,(\alpha/\alpha_0)r}  \leq (\alpha/\alpha_0)c_{1,r}. \]
This claim follows from the observation that $\frac{d}{dr} \frac{c_{1,r}}{r}$ is positive.
To see this note that $\theta_{k,r} - \theta_{k+1,r}$ decreases as $r$ increases, but $m_k+b_k\leq 0$.
Additionally, although $\sum_{k=K}^{\infty} (-\cos(\theta_{k,r}) + \cos(\theta_{k+1,r}))$ is constant as $r$ changes, but, as $r$ increases, the weight shifts to lower values of $k$ for which $m_k$ is larger.
With these observations, the first claim now follows by inspection.

For the second claim, note again that $((\alpha/\alpha_0)r)/\alpha=r/\alpha_0$, so that
$C_{2,\alpha,(\alpha/\alpha_0)r}=\bigl(c_{2,(\alpha/\alpha_0)r}/\pi+1\bigr)/\log(r/\alpha_0)$. Telescoping the $c_{2,r}$ sum then gives the result.
\end{proof}

\begin{theorem}\label{thm:circularregions2}
Let $0<\alpha\leq 1$ and $t>2$. Assume that for $\tau\in [t-e^{2/3}\alpha,t+e^{2/3}\alpha]$ and
$(1-\sigma)< e^{2/3}\alpha$ we have a bound
\[ \log|\zeta(\sigma+i\tau) | < B(1-\sigma)^{3/2}\log|\tau| + \frac{2}{3} \log\log|\tau| + A \]
then
\[ N(1+it,\alpha) \leq
\frac{\tilde{C}_{1,\alpha}}{\pi} \log|t| +
2 \log\log|t| +
\tilde{C}_{3,\alpha}
\]
where
\begin{align*}
 \tilde{C}_{1,\alpha} & = \frac{3E_{1/2}e B\alpha^{3/2}}{2},  \\ 
 \tilde{C}_{3,\alpha} & = \frac{3A}{4} + \frac{3c_{4,e^{2/3}\alpha}}{2} + \frac{3\log(29.388)}{2}\\
 & <  \frac{3A}{4}  + \frac{3}{4}\log(\zeta(1+e^{2/3}\alpha)) + \frac{3\log(2)}{4}+ \frac{3\log(29.388)}{2}.
\end{align*}
\end{theorem}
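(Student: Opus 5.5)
The plan is to run the Jensen argument of Proposition~\ref{prop:jensenbound} at $s=1+it$ with the radius $r=e^{2/3}\alpha$, so that $\log(r/\alpha)=2/3$ and $1/\log(r/\alpha)=3/2$. This choice of $r$ is also why the hypothesis is imposed on $\tau\in[t-e^{2/3}\alpha,t+e^{2/3}\alpha]$ and $1-\sigma<e^{2/3}\alpha$: these are exactly the points $1+it+re^{i\theta}$ with $\theta\in[\pi/2,3\pi/2]$. Jensen then gives
\[ N(1+it,\alpha)\le \frac{3}{2}\left(\frac{1}{2\pi}\int_{\pi/2}^{3\pi/2}\log|\zeta(1+it+re^{i\theta})|\,d\theta+\frac{1}{2\pi}\int_{-\pi/2}^{\pi/2}\log|\zeta(1+it+re^{i\theta})|\,d\theta-\log|\zeta(1+it)|\right). \]
I will bound the three pieces separately.

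For the left half-circle I apply Proposition~\ref{prop:Richert} with $T=t$. Its hypothesis is our assumption, except that our bound is stated in terms of $\log|\tau|$ rather than $\log|T|$. This yields
\[ \frac{E_{1/2}Br^{3/2}}{\pi}\log|t|+\frac13\log\log|t|+\frac A2. \]
Since $r^{3/2}=e\alpha^{3/2}$, multiplying the main term by $3/2$ gives $\tilde C_{1,\alpha}/\pi$.

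The main obstacle is the passage from $\log|\tau|$ to $\log|t|$. Here $\tau=t+r\sin\theta$ may exceed $t$. I would handle this by symmetry: pair $\theta$ with $2\pi-\theta$ in $[\pi/2,3\pi/2]$, so that the arguments are $t\pm r|\sin\theta|$ while $\sigma$ and $(1-\sigma)^{3/2}$ are unchanged. By concavity of $\log$ and of $\log\log$,
\[ \log(t+x)+\log(t-x)\le 2\log t,\qquad \log\log(t+x)+\log\log(t-x)\le2\log\log t, \]
so averaging the bound over each pair reduces it to the bound at $|\tau|=t$. If that step fails, I would instead absorb the discrepancy into $A$, which costs only $O(r/t)$. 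Once it goes through, the computation in Proposition~\ref{prop:Richert} applies verbatim, since $(1-\sigma)=-r\cos\theta$ and $\int\cos^{3/2}$ produces $E_{1/2}$.

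For the right half-circle I use Proposition~\ref{prop:integraloutside}, valid since $t>2$ and $r<t$ (note $r\le e^{2/3}<t$), which bounds it by $c_{4,e^{2/3}\alpha}$. For the last piece I use $-\log|\zeta(1+it)|<\log\log|t|+\log(29.388)$ from \cite{Leong2024}. Multiplying everything by $3/2$, the $\log\log|t|$ coefficient becomes $\tfrac32(\tfrac13+1)=2$, and the constant becomes
\[ \frac{3A}{4}+\frac{3c_{4,e^{2/3}\alpha}}{2}+\frac{3\log(29.388)}{2}. \]
Finally, the stated inequality for $\tilde C_{3,\alpha}$ should follow from the second part of Proposition~\ref{prop:integraloutside}, which applies because $e^{2/3}\alpha\le e^{2/3}<2$. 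That part bounds $c_{4,r}$ by $\tfrac12\log\zeta(1+r)+\tfrac{\log 2}{2}-\tfrac{(\pi-2)r\gamma}{2\pi}+\tfrac{r^2(\gamma^2+2\gamma_1)}{8}$. I then need the last two terms to have a nonpositive sum, that is $r(\gamma^2+2\gamma_1)/8\le(\pi-2)\gamma/(2\pi)$. Numerically $\gamma^2+2\gamma_1\approx0.188$, so this holds for all $r\le2$.
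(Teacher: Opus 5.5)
Your proposal is correct and follows the paper's proof. The paper uses Jensen with $r=e^{2/3}\alpha$ (so $1/\log(r/\alpha)=3/2$), Proposition~\ref{prop:Richert} on the left half-circle, $c_{4,r}$ from Proposition~\ref{prop:integraloutside} on the right half-circle, and Leong's bound for $-\log|\zeta(1+it)|$, exactly as you do. Two of your additions are useful: your pairing of $\theta$ with $2\pi-\theta$ correctly fills in the mismatch between the $\log|\tau|$ in the hypothesis and the $\log|T|$ required by Proposition~\ref{prop:Richert}, which the paper passes over (this step tacitly uses $B\ge 0$), and your check that $-\frac{(\pi-2)r\gamma}{2\pi}+\frac{r^2(\gamma^2+2\gamma_1)}{8}\le 0$ for $r\le 2$ properly justifies the final inequality for $\tilde C_{3,\alpha}$.
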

\begin{proof}
Follows as above using Proposition~\ref{prop:Richert} in place of Proposition~\ref{prop:jensen-easy},
with $r=e^{2/3}\alpha$, so that $\log(r/\alpha)=\frac{2}{3}$.
\end{proof}

\begin{table}[h]
\caption{Sample values for $c_{1,r}$, $c_{2,r}$, $c_{3,r}$ as defined in Proposition~\ref{prop:jensen-easy}, $c_{4,r}$ as defined in Proposition~\ref{prop:integraloutside}, and $c_{5,r}$ as defined in Proposition~\ref{prop:outside3}.}\label{tab:crvalues}
\begin{tabular}{|c|ccccc|}
\hline
r  & $c_{1,r}$& $c_{2,r}$& $c_{3,r}$& $c_{4,r}$& $c_{5,r}$\\
\hline
$ 1 $&$0.4260491$&$1.320694$&$1.233886$&$0.5099522$&$0.8331601$\\
$ 1/2 $&$0.1467868$&$0.9393432$&$3.061478$&$0.7795712$&$1.126439$\\
$ 1/3 $&$0.08122972$&$1.491029$&$0.9846382$&$0.9546469$&$1.271554$\\
$ 1/4 $&$0.05402652$&$1.570797$&$0.6843504$&$1.084244$&$1.360125$\\
$ 1/5 $&$0.03956564$&$1.570797$&$0.6843504$&$1.187130$&$1.420588$\\
$ 1/6 $&$0.03071227$&$1.570797$&$0.6843504$&$1.272441$&$1.464849$\\
$ 1/7 $&$0.02516876$&$1.570797$&$0.6843504$&$1.345309$&$1.498839$\\
$ 1/8 $&$0.02110448$&$1.570797$&$0.6843504$&$1.408902$&$1.525868$\\
$ 1/9 $&$0.01803813$&$1.570797$&$0.6843504$&$1.465317$&$1.547944$\\
$ 1/10 $&$0.01569390$&$1.570797$&$0.6843504$&$1.516009$&$1.566357$\\
$ 1/11 $&$0.01393216$&$1.570797$&$0.6843504$&$1.562034$&$1.581977$\\
$ 1/12 $&$0.01249892$&$1.570797$&$0.6843504$&$1.604178$&$1.595417$\\
$ 1/13 $&$0.01129907$&$1.570797$&$0.6843504$&$1.643045$&$1.607118$\\
$ 1/14 $&$0.01028337$&$1.570797$&$0.6843504$&$1.679108$&$1.617407$\\
$ 1/15 $&$0.009416022$&$1.570797$&$0.6843504$&$1.712745$&$1.626535$\\
$ 1/16 $&$0.008670644$&$1.570797$&$0.6843504$&$1.744261$&$1.634694$\\
$ 1/32 $&$0.003703026$&$1.570797$&$0.6843504$&$2.085161$&$1.702482$\\
$ 1/64 $&$0.001626574$&$1.570797$&$0.6843504$&$2.428881$&$1.743167$\\
$ 1/128 $&$7.256286\cdot 10^{-4}$&$1.570797$&$0.6843504$&$2.774023$&$1.766930$\\
$ 1/256 $&$3.277390\cdot 10^{-4}$&$1.570797$&$0.6843504$&$3.119880$&$1.780529$\\
$ 1/512 $&$1.495313\cdot 10^{-4}$&$1.570797$&$0.6843504$&$3.466095$&$1.788188$\\
$ 1/1024 $&$6.880172\cdot 10^{-5}$&$1.570797$&$0.6843504$&$3.812489$&$1.792449$\\
$ 1/2048 $&$3.188568\cdot 10^{-5}$&$1.570797$&$0.6843504$&$4.158973$&$1.794794$\\
\hline
\end{tabular}
\end{table}
\begin{table}
\caption{Values for $C_{1,\alpha,r}$, $C_{1,\alpha,r}$, and $C_{1,\alpha,r}$ as defined in Theorem~\ref{thm:circularregions1}. Values, including $r$, are rounded up.}\label{tab:CRvalues}
\begin{tabular}{|cc|ccc|c|}
\hline
$\alpha$ & $r$ & $C_{1,\alpha,r}$ & $C_{2,\alpha,r}$ & $C_{3,\alpha,r}$   & $r/\alpha$\\
\hline
$ 1/3 $&$0.6490511$&$0.3264740$&$1.965836$&$7.446222$&$1.947154$\\
$ 1/4 $&$0.5022756$&$0.2117515$&$1.858502$&$7.369401$&$2.009103$\\
$ 1/5 $&$0.3940064$&$0.1538013$&$2.093277$&$7.054466$&$1.970032$\\
$ 1/6 $&$0.3242696$&$0.1170874$&$2.225533$&$6.965111$&$1.945618$\\
$ 1/7 $&$0.2897366$&$0.09333569$&$2.120321$&$6.530988$&$2.028157$\\
$ 1/8 $&$0.2606756$&$0.07783097$&$2.040920$&$6.345352$&$2.085405$\\
$ 1/9 $&$0.2276378$&$0.06616898$&$2.091395$&$6.588795$&$2.048740$\\
$ 1/10 $&$0.2030836$&$0.05706802$&$2.117307$&$6.744948$&$2.030836$\\
$ 1/11 $&$0.1851792$&$0.04988792$&$2.108330$&$6.776810$&$2.036971$\\
$ 1/12 $&$0.1729682$&$0.04420265$&$2.054067$&$6.646159$&$2.075618$\\
$ 1/13 $&$0.1668192$&$0.03972128$&$1.937723$&$6.291698$&$2.168650$\\
$ 1/14 $&$0.1550286$&$0.03611895$&$1.935706$&$6.329759$&$2.170400$\\
$ 1/15 $&$0.1433184$&$0.03302353$&$1.959855$&$6.457329$&$2.149775$\\
$ 1/16 $&$0.1333413$&$0.03033976$&$1.979559$&$6.567526$&$2.133460$\\
$ 1/32 $&$0.06834560$&$0.01242107$&$1.916792$&$6.771429$&$2.187059$\\
$ 1/64 $&$0.03512879$&$0.005269206$&$1.851512$&$6.944143$&$2.248243$\\
$ 1/128 $&$0.01817011$&$0.002295522$&$1.777141$&$7.052067$&$2.325774$\\
$ 1/256 $&$0.009413507$&$0.001021403$&$1.705384$&$7.139335$&$2.409858$\\
$ 1/512 $&$0.004727625$&$4.607252\cdot 10^{-4}$&$1.696848$&$7.492183$&$2.420544$\\
$ 1/1024 $&$0.002374581$&$2.099090\cdot 10^{-4}$&$1.688169$&$7.840865$&$2.431570$\\
$ 1/2048 $&$0.001192880$&$9.643198\cdot 10^{-5}$&$1.679292$&$8.184763$&$2.443018$\\
\hline
\end{tabular}
\end{table}
\begin{table}
\caption{Values for $c_{1,r}$, $c_{2,r}$, $c_{3,r}$ as defined in Proposition~\ref{prop:jensen-easy}, and $c_{4,r}$ as defined in Proposition~\ref{prop:integraloutside}, for the unrounded $r$ values from Table~\ref{tab:CRvalues}. All values are rounded up.}\label{tab:crvalues2}
\begin{tabular}{|c|cccc|}
\hline
r  & $c_{1,r}$& $c_{2,r}$& $c_{3,r}$& $c_{4,r}$\\
\hline
$0.6490511$&$0.2175520$&$0.9738017$&$2.853749$&$0.6729643$\\
$0.5022756$&$0.1477365$&$0.9319669$&$3.089081$&$0.7776710$\\
$0.3940064$&$0.1042849$&$1.317413$&$1.638224$&$0.8812288$\\
$0.3242696$&$0.07793089$&$1.511949$&$0.9058834$&$0.9668954$\\
$0.2897366$&$0.06600019$&$1.568710$&$0.6922055$&$1.017316$\\
$0.2606756$&$0.05720288$&$1.570797$&$0.6843504$&$1.065177$\\
$0.2276378$&$0.04745803$&$1.570797$&$0.6843504$&$1.127225$\\
$0.2030836$&$0.04042969$&$1.570797$&$0.6843504$&$1.180019$\\
$0.1851792$&$0.03549344$&$1.570797$&$0.6843504$&$1.223032$\\
$0.1729682$&$0.03227938$&$1.570797$&$0.6843504$&$1.254995$\\
$0.1668192$&$0.03074843$&$1.570797$&$0.6843504$&$1.272011$\\
$0.1550286$&$0.02798899$&$1.570797$&$0.6843504$&$1.306581$\\
$0.1433183$&$0.02527499$&$1.570797$&$0.6843504$&$1.343779$\\
$0.1333413$&$0.02298980$&$1.570797$&$0.6843504$&$1.378087$\\
$0.06834559$&$0.009720201$&$1.570797$&$0.6843504$&$1.700612$\\
$0.03512879$&$0.004268841$&$1.570797$&$0.6843504$&$2.027367$\\
$0.01817011$&$0.001937541$&$1.570797$&$0.6843504$&$2.353894$\\
$0.009413510$&$8.983930\cdot 10^{-4}$&$1.570797$&$0.6843504$&$2.681106$\\
$0.004727620$&$4.072770\cdot 10^{-4}$&$1.570797$&$0.6843504$&$3.024609$\\
$0.002374580$&$1.865119\cdot 10^{-4}$&$1.570797$&$0.6843504$&$3.368478$\\
$0.001192880$&$8.613635\cdot 10^{-5}$&$1.570797$&$0.6843504$&$3.712486$\\
\hline
\end{tabular}
\end{table}

\section{Rectangular intervals}\label{sec:rectangularjensen}

We now work to bound the number of zeros in rectangular regions near the one line. We recall that $N(t-h,t+h,\alpha)$ denotes the number of zeros in the rectangle with vertices $1+it-ih$,  $1+it+ih$, $1+it-ih-\alpha$, and  $1+it+ih-\alpha$.
We shall ultimately provide two methods to obtain similar results, each method performs better for a different range of values $\alpha$.

To bound $N(t-h,t+h,\alpha)$ we define
\begin{equation}\label{eq:hatalpha} \hat{\alpha}_r = \sqrt{r^2-\alpha^2} \end{equation}
and consider the integral
\begin{equation}
    \int_{-h-\hat{\alpha}_r}^{h+\hat{\alpha}_r} \left(\frac{1}{2\pi}\int_0^{2\pi} \log|\zeta(1+i(t+x)+re^{i\theta})|\,d\theta - \log|\zeta(1+i(t+x))|\right)\,dx
\end{equation}
so that each zero lying within a radius $r$ of a point on the line $i(t-h-\hat{\alpha}_r-r)$ to $(t-h-\hat{\alpha}_r-r)$ contributes to the integral. We get the following:
 For $0<\alpha<r$ write
\[ D_{r,\alpha} := 2r\sqrt{1-(\alpha/r)^2}-2\alpha\tan^{-1}(\sqrt{(r/\alpha)^2-1}) \]
for the quantity appearing repeatedly as a denominator below.
\begin{proposition}\label{prop:jensenrectangle}
Suppose $0<\alpha<r$ and $0<h<t-\hat{\alpha}_r-r$.
\begin{align*} N(t-h,t+h,\alpha) \leq \frac{1}{D_{r,\alpha}}\int_{-h-\hat{\alpha}_r}^{h+\hat{\alpha}_r} \Biggl(&\frac{1}{2\pi}\int_0^{2\pi} \log|\zeta(1+i(t+x)+re^{i\theta})|\,d\theta
\\& - \log|\zeta(1+i(t+x))|\Biggr)\,dx.
\end{align*}
\end{proposition}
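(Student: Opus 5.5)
Integrating Jensen's formula \eqref{eq:jensen} in the centre $1+i(t+x)$ over $x\in[-h-\hat{\alpha}_r,h+\hat{\alpha}_r]$ gives
\[ \int_{-h-\hat{\alpha}_r}^{h+\hat{\alpha}_r}\Bigl(\tfrac{1}{2\pi}\int_0^{2\pi}\log|\zeta(1+i(t+x)+re^{i\theta})|\,d\theta-\log|\zeta(1+i(t+x))|\Bigr)dx = \sum_\rho \int_{-h-\hat{\alpha}_r}^{h+\hat{\alpha}_r}\log^+\!\frac{r}{|\rho-1-i(t+x)|}\,dx, \]
with multiplicity, where $\log^+=\max(\log,0)$. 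Interchanging the sum and integral is justified because for each $x$ only finitely many zeros lie in the disc, and the integrand is nonnegative. The hypothesis $h<t-\hat{\alpha}_r-r$ keeps all centres and discs in the upper half-plane, away from the pole at $s=1$. Finitely many $x$ give centres at which $\zeta$ vanishes on the $1$-line; in fact none exist, so there is no issue.

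Since every term on the right is nonnegative, it suffices to show that each zero $\rho$ counted by $N(t-h,t+h,\alpha)$ contributes at least $D_{r,\alpha}$. Write $\rho=1-a+i(t+y)$ with $0\le a\le\alpha$ and $|y|\le h$; then the contribution is
\[ I(a)=\int_{-h-\hat{\alpha}_r}^{h+\hat{\alpha}_r}\log^+\frac{r}{\sqrt{a^2+(x-y)^2}}\,dx. \]
The integrand is supported on $|x-y|\le\sqrt{r^2-a^2}$. Since $a\le\alpha$, we have $\sqrt{r^2-a^2}\ge\sqrt{r^2-\alpha^2}=\hat{\alpha}_r$, but the support could still exceed the window $[-h-\hat{\alpha}_r,h+\hat{\alpha}_r]$. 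So we lower-bound $I(a)$ by integrating only over $|x-y|\le\hat{\alpha}_r$, which lies inside the window because $|y|\le h$. On that range $\sqrt{a^2+u^2}\le\sqrt{\alpha^2+u^2}\le r$, so
\[ I(a)\ \ge\ \int_{-\hat{\alpha}_r}^{\hat{\alpha}_r}\log\frac{r}{\sqrt{\alpha^2+u^2}}\,du=2\hat{\alpha}_r\log r-\int_0^{\hat{\alpha}_r}\log(\alpha^2+u^2)\,du. \]

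The remaining step is to evaluate this elementary integral using
\[ \int\log(\alpha^2+u^2)\,du=u\log(\alpha^2+u^2)-2u+2\alpha\tan^{-1}(u/\alpha). \]
At $u=\hat{\alpha}_r$ we have $\alpha^2+\hat{\alpha}_r^2=r^2$, so the two $\log r$ terms cancel and the lower bound becomes
\[ 2\hat{\alpha}_r-2\alpha\tan^{-1}\bigl(\hat{\alpha}_r/\alpha\bigr)=2r\sqrt{1-(\alpha/r)^2}-2\alpha\tan^{-1}\!\sqrt{(r/\alpha)^2-1}=D_{r,\alpha}. \]
Two bookkeeping points remain. Boundary zeros are counted with weight $1/2$, so this convention only helps. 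One also needs $D_{r,\alpha}>0$, which holds because $\tan^{-1}v<v$ for $v>0$.

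Dividing by $D_{r,\alpha}$ gives the proposition. The main point needing care is the choice of truncation to $|x-y|\le\hat{\alpha}_r$. It ensures a uniform lower bound for zeros near the ends of the interval and for all depths $a\le\alpha$; the worst case $a=\alpha$ is exactly where the $\log r$ terms cancel to produce $D_{r,\alpha}$.
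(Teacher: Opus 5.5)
Your proposal is correct and is essentially the paper's proof: you integrate Jensen's formula over the centres $1+i(t+x)$ and interchange sum and integral. You then keep, for each counted zero, only the part of its contribution with $|x-y|\le\hat{\alpha}_r$, and bound that part below by the worst case $1-\Re(\rho)=\alpha$, which evaluates exactly to $D_{r,\alpha}$. Your extra checks are details the paper leaves implicit: that $h<t-\hat{\alpha}_r-r$ keeps $s=1$ out of every disc, nonnegativity for the interchange, and $D_{r,\alpha}>0$.
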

\begin{proof}
From Equation~\ref{eq:jensen} we have
\begin{align*} \sum_{|\rho - 1+it| < r}  \log(r/|\rho - 1+it|) &= \frac{1}{2\pi}\int_0^{2\pi} \log|\zeta(1+i(t+x)+re^{i\theta})|\,d\theta
\\&\quad - \log|\zeta(1+i(t+x))|. \end{align*}
As such we have
\begin{align*} &  \int_{-h-\hat{\alpha}_r}^{h+\hat{\alpha}_r} \left(\frac{1}{2\pi}\int_0^{2\pi} \log|\zeta(1+i(t+x)+re^{i\theta})|\,d\theta - \log|\zeta(1+i(t+x))|\right)\,dx \\
&\quad= \int_{-h-\hat{\alpha}_r}^{h+\hat{\alpha}_r}  \sum_{|\rho - 1+it| < r}  \log(r/|\rho - 1+it|)\,dt.
\end{align*}
Exchanging the integral and the sum gives that this is larger than
\[  \sum_{\substack{1-\Re(\rho)<\alpha\\-h<\Im(\rho)-t_0<h }}
\int_{\Im(\rho)-\hat{\alpha}_r}^{\Im(\rho)+\hat{\alpha}_r} \log(r/|\rho - 1+it|)dt  \}
\]
So that the integral detects each zero with weight 
\begin{align*} 
\int_{\Im(\rho)-\hat{\alpha}_r}^{\Im(\rho)+\hat{\alpha}_r} \log(r) -\log(|\rho - 1+it|)dt 
&= \int_{-\hat{\alpha}_r}^{\hat{\alpha}_r} \log(r) - \log((\Re(\rho) - 1)^2+t^2)dt \\
&> \int_{-\hat{\alpha}_r}^{\hat{\alpha}_r} \log(r) - \frac{1}{2}\log(\alpha^2+t^2)dt.
\end{align*}
Since
\[\int_{-\hat{\alpha}_r}^{\hat{\alpha}_r} \log(r) - \frac{1}{2}\log(\alpha^2+t^2)dt
=    D_{r,\alpha}
\]
 we have the claimed result.   
\end{proof}

It follows simply by integrating equation \eqref{firstbound} that for $r>\alpha$
\[ N(t-h,t+h,\alpha) < (2h+2\hat{\alpha}_r)\frac{\frac{1}{3\pi}\log|t| + O(\log\log|t|)}{2\sqrt{1-(\alpha/r)^2}-2(\alpha/r)\tan^{-1}(\sqrt{(r/\alpha)^2-1})} \]

\begin{remark}
    In contrast to the situation with circular region for a fixed $\alpha$ the naive optimal value for $r$ would appear to be to simply take the largest possible $r$, however, in practice better values of $c_r$ for smaller $r$ eventually provide a benefit. See Table~\ref{tab:rectangularjensen}.
 \end{remark}

Taking $\alpha=1/8$ and  $r=1/2$ gives
\[ N(t-h,t+h,1/8) < (2h+\sqrt{15}/4)\frac{\frac{c_{1,1/2}}{\pi}\log|t| + 1.5\log\log|t| + O(1)} {\sqrt{15}/4-(1/4)\tan^{-1}(\sqrt{15})}\]
so that we get the dominant term
\[ \frac{0.45964}{\pi} h \log|t|. \]
This will tell us that for $t$ and $h$ large enough that at least $8.07\%$ of zeros have real part in the interval $[1/8,7/8]$.

\begin{remark}\label{rem:smallh}
It is possible to tune differently the range of integration, for example 
integrating from $-h$ to $+h$ completely removes the $2\hat{\alpha}_r$ term but gives instead
\[ N(t-h,t+h,\alpha) < (4h)\frac{\frac{1}{3\pi}\log|t| + (1-\frac{1}{\pi} + 1)\log\log|t| + C}{2\sqrt{1-(\alpha/r)^2}-2(\alpha/r)\tan^{-1}(\sqrt{(r/\alpha)^2-1})}  \]
which is clearly worse for large $h$, but in the example $r=1/2$, $\alpha=1/8$ is an improvement for $h<\sqrt{15}/{8}$
\end{remark}

\subsection{Integration outside the critical strip: Part II}\label{sec:intoutside2}

As with circular regions there is advantage to taking a more refined approach to integrals outside the critical strip. That is, rather than integrating our bound on the error term arising in Proposition~\ref{prop:integraloutside} we shall exchange the order of integration in this region. That is we shall integrate
\[  \log|\zeta(1+it)| + 
\frac{1}{2\pi}\int_{-\pi/2}^{\pi/2} \log|\zeta(1+it+re^{i\theta})|\,d\theta. \]

\begin{proposition}\label{prop:outside2}
 Suppose $h\geq0$ and $T>h+\hat{\alpha}_r$ then
\[   \frac{1}{2\pi}\int_{T-h-\hat{\alpha}_r}^{T+h+\hat{\alpha}_r}  \log|\zeta(1+it)|dt \leq \frac{\Phi(1)}{\pi}  \]
where
\begin{equation}\label{eq:phis} \Phi(s) = \sum_{n>1} \frac{\Lambda(n)}{\log(n)^2 n^s}
\end{equation}
is an anti-derivative of $-\log(\zeta(s))$.
Moreover,
\[ \Phi(1) < 1.7975699586287395. \]
\end{proposition}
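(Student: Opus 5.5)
The plan is to work slightly to the right of the $1$-line, where the Dirichlet series for $\log\zeta$ converges absolutely, integrate term by term, and then let $\sigma\to1^+$. For $\sigma>1$ we have
\[ \log\zeta(\sigma+it)=\sum_{n>1}\frac{\Lambda(n)}{\log(n)\,n^{\sigma+it}}, \]
and $\log|\zeta(\sigma+it)|$ is the real part of this. Write $a=T-h-\hat{\alpha}_r$ and $b=T+h+\hat{\alpha}_r$; note $a>0$ by hypothesis. Since the series converges absolutely and uniformly on $[a,b]$, we may interchange sum and integral:
\[ \int_a^b\log|\zeta(\sigma+it)|\,dt=\Re\sum_{n>1}\frac{\Lambda(n)}{\log(n)\,n^{\sigma}}\int_a^b n^{-it}\,dt . \]

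The inner integral is elementary. We have $\int_a^b n^{-it}\,dt=\frac{n^{-ia}-n^{-ib}}{i\log n}$, whose real part is $\frac{\sin(b\log n)-\sin(a\log n)}{\log n}$. In absolute value this is at most $2/\log n$, uniformly in $a,b$; this is why the bound does not depend on the length $2h+2\hat{\alpha}_r$. Hence
\[ \int_a^b\log|\zeta(\sigma+it)|\,dt\le 2\sum_{n>1}\frac{\Lambda(n)}{\log(n)^2n^{\sigma}}=2\Phi(\sigma)\le 2\Phi(1). \]
Here $\Phi(1)$ is finite because $\sum_{n\ge2}\Lambda(n)/(n\log(n)^2)$ converges; by Chebyshev-type bounds and partial summation it behaves like $\int_2^\infty dx/(x\log^2x)$.

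Next we pass to the limit $\sigma\to1^+$. Since $\zeta$ has no zeros on $\Re(s)=1$ and $[a,b]$ stays away from $t=0$, the function $\log|\zeta(\sigma+it)|$ is continuous on the compact set $[1,2]\times[a,b]$. It therefore converges uniformly to $\log|\zeta(1+it)|$, and the integrals converge. This gives $\int_a^b\log|\zeta(1+it)|\,dt\le2\Phi(1)$, and dividing by $2\pi$ yields the claimed bound $\Phi(1)/\pi$. Note also that $\Phi'(s)=-\log\zeta(s)$ for real $s>1$ and $\Phi(\sigma)\to0$ as $\sigma\to\infty$, which justifies calling $\Phi$ an anti-derivative of $-\log\zeta$.

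The main obstacle is the rigorous numerical value $\Phi(1)<1.7975699586\ldots$. First I would use $\Phi(1)=\int_1^\infty\log\zeta(s)\,ds$. This holds since $\Phi(\sigma)=\int_\sigma^\infty\log\zeta(s)\,ds$ for $\sigma>1$, and monotone convergence applies as $\sigma\to1^+$; the logarithmic singularity at $s=1$ is integrable.

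Then I would split this integral into three pieces:
\begin{itemize}
\item On $[1,1+\delta]$, write $\log\zeta(s)=-\log(s-1)+\log((s-1)\zeta(s))$. Integrate $-\log(s-1)$ exactly, and bound $\log((s-1)\zeta(s))$ by its Stieltjes-constant expansion, as in Lemma~\ref{lemma:logderzetabound}.
\item On a middle range, use validated interval quadrature.
\item For large $s$, use the tail bound $\log\zeta(s)\le\zeta(s)-1\le 2^{-s}(1+2/(s-1))$.
\end{itemize}
As a cross-check, one can alternatively sum $\Lambda(n)/(n\log^2n)$ over $n\le X$ and bound the tail with explicit estimates for $\psi(x)$ via partial summation.
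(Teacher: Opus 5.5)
Your proposal is correct and is essentially the paper's argument. Integrating the Dirichlet series term by term is exactly the paper's evaluation of the integral at $\sigma=1+\epsilon$ via the antiderivative $\Phi$, with each endpoint bounded by $\Phi(1+\epsilon)$, followed by the same uniform-continuity limit $\epsilon\to0$; the paper likewise just attributes the bound on $\Phi(1)$ to a rigorous numerical computation. Your explicit real part $\frac{\sin(b\log n)-\sin(a\log n)}{\log n}$ is in fact slightly more accurate than the paper's displayed $\Re(\Phi(\cdot)-\Phi(\cdot))$, which should involve imaginary parts, though the resulting bound $2\Phi(1+\epsilon)$ is unaffected.
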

\begin{proof}
By the Fundamental Theorem of Calculus for all $\epsilon > 0$ we have
\begin{align*} \int_{T-h-\hat{\alpha}_r}^{T+h+\hat{\alpha}_r}  \log|\zeta(1+\epsilon+it)|dt 
&= \Re(\Phi(1+\epsilon+i(t+h+\hat{\alpha}_r)) - \Phi(1+\epsilon+i(t-h-\hat{\alpha}_r)))
\\& \leq 2\Phi(1+\epsilon).
\end{align*}
Since $\log(\zeta(1+\epsilon+it))$ is uniformly continuous for $\epsilon\in[0,1]$ and $t\in [T-h-\hat{\alpha}_r,T+h+\hat{\alpha}_r]$ and since $\Phi(s)$ is continuous on $[1,\infty)$, the result follows by taking the limit.

The bound for $\Phi(1)$ comes from an explicit rigorous computation of the defining integral.
\end{proof}

\begin{proposition}\label{prop:outside3}
Suppose $T>h+\hat{\alpha}_r$ then
   \begin{align*} \left|\int_{-h-\hat{\alpha}_r}^{h+\hat{\alpha}_r}\right.&  \left.\frac{1}{2\pi}\int_{-\pi/2}^{\pi/2} \log|\zeta(1+it+ix+re^{i\theta})|\,d\theta \,dx\right| \leq c_{5,r}
   \end{align*}
where $c_{5,r} =  \Phi(1+r) +  \frac{2r}{\pi}\int_{0}^{1} \log(\zeta(1+ru))\arcsin(u)\,du.$
Moreover, if $0 < r \leq 2$ then
\[  c_{5,r} \leq
\Phi(1+r)  + \left(\frac{-4}{\pi} +1+ \frac{\gamma r}{4} - \log(r)+ \frac{2\log(2 r)}{\pi} \right)r
\]
Values for $c_{5,r}$ are given in Table~\ref{tab:crvalues}
\end{proposition}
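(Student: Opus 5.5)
Exchange the order of integration (Fubini) and then treat the $x$-integral exactly as in Proposition~\ref{prop:outside2}. For $\theta\in(-\pi/2,\pi/2)$ the point $1+r\cos\theta+i(t+x+r\sin\theta)$ lies strictly to the right of the $1$-line, so $\log|\zeta|$ there is bounded by $\log\zeta(1+r\cos\theta)$. The only non-integrable behaviour is near $\theta=\pm\pi/2$, and there the bound is of size $\log(1/(r\cos\theta))$, which is integrable. Fubini therefore applies and the quantity in question equals
\[ \frac{1}{2\pi}\int_{-\pi/2}^{\pi/2}\int_{-h-\hat{\alpha}_r}^{h+\hat{\alpha}_r}\log|\zeta(1+r\cos\theta+i(t+x+r\sin\theta))|\,dx\,d\theta. \]

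\textbf{Inner integral.} Write $\sigma=1+r\cos\theta$. By the argument of Proposition~\ref{prop:outside2}, $\Phi$ is an antiderivative of $-\log\zeta(s)$. Hence
\[ \int \log|\zeta(\sigma+iy)|\,dy=\Re\bigl(\text{difference of } i\Phi \text{ at two endpoints}\bigr). \]
Each endpoint value has modulus at most $\Phi(\sigma)$, since the Dirichlet series for $\Phi$ has positive coefficients. So the inner integral is at most $2\Phi(1+r\cos\theta)$ in absolute value. For $\theta$ near $\pm\pi/2$ one can use continuity as $\sigma\to1^+$, as in Proposition~\ref{prop:outside2}. This bound holds with absolute values, which gives the two-sided statement.

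\textbf{Outer integral.} It remains to bound
\[ \frac{1}{\pi}\int_{-\pi/2}^{\pi/2}\Phi(1+r\cos\theta)\,d\theta=\frac{2}{\pi}\int_0^{1}\frac{\Phi(1+ru)}{\sqrt{1-u^2}}\,du. \]
I will integrate by parts with $\arcsin u$, exactly as in Proposition~\ref{prop:integraloutside}. The boundary term at $u=1$ is $\frac{2}{\pi}\cdot\frac{\pi}{2}\Phi(1+r)=\Phi(1+r)$, and the boundary term at $u=0$ vanishes since $\Phi(1)$ is finite. Using $\frac{d}{du}\Phi(1+ru)=-r\log\zeta(1+ru)$, the remaining integral is $+\frac{2r}{\pi}\int_0^1\log\zeta(1+ru)\arcsin(u)\,du$. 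This integral converges because $\log\zeta(1+ru)\sim-\log(ru)$ is integrable against $\arcsin u$. Together these give $c_{5,r}$.

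\textbf{Explicit bound for $r\le 2$.} I will bound $\log\zeta(1+z)\le \log(1/z)+\gamma z$. This should follow from $\zeta(1+z)\le \frac1z e^{\gamma z}$, i.e.\ from $\log(z\zeta(1+z))$ being concave with derivative $\gamma$ at $0$, or more simply from integrating the bound of Lemma~\ref{lemma:logderzetabound}. The bound splits into three elementary integrals:
\begin{align*}
\int_0^1 \arcsin u\,du&=\frac{\pi}{2}-1,\\
\int_0^1 u\arcsin u\,du&=\frac{\pi}{8},\\
\int_0^1 -\log(u)\arcsin u\,du&=2-\frac{\pi}{2}+\log 2-1.
\end{align*}
The last one comes from integrating by parts against the antiderivative of $\log u$. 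Multiplying through by $\frac{2r}{\pi}$ should reproduce the stated expression $\bigl(-\frac4\pi+1+\frac{\gamma r}{4}-\log r+\frac{2\log(2r)}{\pi}\bigr)r$. I expect the main obstacle to be exactly this step: choosing an elementary upper bound for $\log\zeta(1+z)$ valid on $(0,2]$ and checking that the resulting constants match the stated form.
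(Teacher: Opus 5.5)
Your argument for the first claim is the paper's argument: Fubini, $\Phi$ as an antiderivative with $|\Phi(\sigma+iy)|\le\Phi(\sigma)$, then the substitution $u=\cos\theta$ and integration by parts against $\arcsin u$. For the explicit bound you also make the paper's choice, $\log\zeta(1+z)\le-\log z+\gamma z$, which is the $N=0$ case of Lemma~\ref{lemma:logderzetabound} and needs $r\le 2$. One small point: Fubini needs absolute integrability, and an upper bound on $\log|\zeta|$ alone does not give it. Use the two-sided bound $|\log|\zeta(s)||\le\log\zeta(\Re s)$ for $\Re s>1$.

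The genuine error is in your third elementary integral. You claim $\int_0^1(-\log u)\arcsin u\,du=2-\frac{\pi}{2}+\log 2-1=1-\frac{\pi}{2}+\log 2\approx 0.122$, but that value is impossible. Since $\arcsin u\ge u$, the integral is at least $\int_0^1(-u\log u)\,du=\frac14$. Integrate by parts using $\frac{d}{du}(u-u\log u)=-\log u$. This gives $\frac{\pi}{2}-\int_0^1\frac{u}{\sqrt{1-u^2}}\,du+\int_0^1\frac{u\log u}{\sqrt{1-u^2}}\,du=\frac{\pi}{2}-1+(\log 2-1)$, so the correct value is $\frac{\pi}{2}-2+\log 2\approx0.264$.

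With your value, the final coefficient comes out as $\frac{2}{\pi}-1$ rather than $1-\frac{4}{\pi}$. That does not match the statement, and it is not even a valid upper bound, since it is too small by $\frac{2(\pi-3)}{\pi}r$. So your expectation that the constants "should reproduce" the stated form is false as written. With the corrected value, $\int_0^1(-\log(ru)+\gamma ru)\arcsin u\,du=-2+\frac{\pi}{2}+\frac{\gamma\pi r}{8}-\frac{\pi}{2}\log r+\log(2r)$. Multiplying by $\frac{2r}{\pi}$ gives exactly the stated expression.
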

\begin{proof}
By exchanging the order of integration we have
\begin{align*} -\int_{-h-\hat{\alpha}_r}^{h+\hat{\alpha}_r}& \frac{1}{2\pi}\int_{-\pi/2}^{\pi/2} \log|\zeta(1+it+ix+re^{i\theta})|\,d\theta\,dx\\
& = -\frac{1}{2\pi}\Re\left(\int_{-\pi/2}^{\pi/2}\int_{-h-\hat{\alpha}_r}^{h+\hat{\alpha}_r} \log(\zeta(1+it+ix+re^{i\theta}))\,dx\,d\theta\right) \\
&=  \frac{1}{2\pi} \Re\left(\int_{-\pi/2}^{\pi/2} \Phi(1+i(t+h+\hat{\alpha}_r)+re^{i\theta}) - \Phi(1+i(t-h-\hat{\alpha}_r)+re^{i\theta})\,d\theta\right)\\
&\leq \frac{1}{\pi} \int_{-\pi/2}^{\pi/2}  \Phi(1+r\cos(\theta))\,d\theta\\
&= \frac{2}{\pi} \int_{0}^{1} \frac{\Phi(1+ru)}{\sqrt{1-u^2}}\,du\\
&= \Phi(1+r) +  \frac{2r}{\pi} \int_{0}^{1} \log(\zeta(1+ru))\arcsin(u)\,du
\end{align*} 
Noting that $\lim_{u\rightarrow0} \log(\zeta(1+ru))\arcsin(u) = 0$ we conclude the integral is convergent and get the first claim.

For the second claim use the bound from Lemma~\ref{lemma:logderzetabound}
\[  \log(\zeta(1+ru)) \leq  -\log(ru) + \gamma ru  \]
to obtain
\begin{align*}
 \int_0^1\log(\zeta(1+ru))\arcsin(u)\,du 
 &\leq \int_{0}^{1} ( -\log(ru) + \gamma ru )\arcsin(u)\,du \\
&= -2 + \frac{\pi}{2} + \frac{\gamma\pi r}{8} - \frac{\pi \log(r)}{2} +\log(2 r).
\end{align*}
\end{proof}

\begin{theorem}\label{thm:rectangularjensen}
Suppose $0<\alpha<r<1$, $0<h$ and $t-h-\hat{\alpha}_r>13$,  then
\begin{align*} N(t-h,t+h,\alpha) &\leq  (2h+2\hat{\alpha}_r)\left(\frac{B_{1,\alpha,r}}{\pi} \log|t|
+ B_{2,\alpha,r}\log\log|t| +  B_{3,\alpha,r} \right) + B_{4,\alpha,r} \\&\quad+ \Ostar\!\biggl(\frac{1}{D_{r,\alpha}}\biggl(\frac{5(h+\hat{\alpha}_r)}{3(t-h-\hat{\alpha}_r-1)} + \frac{2(h+\hat{\alpha}_r)}{(t-h-\hat{\alpha}_r-1)^2} \\&\qquad\qquad+ \frac{1158(h+\hat{\alpha}_r)}{8(t-h-\hat{\alpha}_r-1)^2\log(t-h-\hat{\alpha}_r-1)}\biggr)\biggr)
\end{align*}
where
\begin{align*}
    B_{1,\alpha,r} &= \frac{c_{1,r}}{ D_{r,\alpha}}\\
    B_{2,\alpha,r} &= \frac{c_{2,r}}{ \pi  D_{r,\alpha}}\\
    B_{3,\alpha,r} &= \frac{c_{3,r}}{  \pi  D_{r,\alpha}} \\
    B_{4,\alpha,r} &= \frac{ 2\Phi(1) + c_{5,r}}{ D_{r,\alpha}}.
\end{align*}
See Table~\ref{tab:rectangularjensen} for some optimized values of $B_{1,\alpha,r}$, $B_{2,\alpha,r}$, $B_{3,\alpha,r}$, and $B_{4,\alpha,r}$.

If in addition $t>10^{12}$ and $h<t^{2/3}$, then the $\Ostar$ term can be replaced with
\[
\Ostar\!\left(\frac{1}{D_{r,\alpha}}\left(\frac{3.34}{t^{1/3}} + \frac{4.01}{t^{4/3}} + \frac{291}{t^{4/3}\log|t|}\right)\right).
\]
\end{theorem}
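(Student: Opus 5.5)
The proof starts from Proposition~\ref{prop:jensenrectangle}. It bounds $D_{r,\alpha}N(t-h,t+h,\alpha)$ by the $x$-integral over $[-h-\hat{\alpha}_r,h+\hat{\alpha}_r]$ of the Jensen expression at $1+i(t+x)$. The hypothesis $t-h-\hat{\alpha}_r>13$ makes it applicable for every $x$ in this range. I would split the inner $\theta$-integral into its left half $\theta\in[\pi/2,3\pi/2]$, inside the strip, and its right half $\theta\in[-\pi/2,\pi/2]$, outside. The term $-\log|\zeta(1+i(t+x))|$ is kept separate, and each of the three pieces is treated on its own.

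For the inside half, apply Proposition~\ref{prop:jensen-easy} pointwise at height $t+x$ and integrate over $x$. This gives
\[ \int_{-h-\hat{\alpha}_r}^{h+\hat{\alpha}_r}\Bigl(\tfrac{c_{1,r}}{\pi}\log|t+x|+\tfrac{c_{2,r}}{\pi}\log\log|t+x|+\tfrac{c_{3,r}}{\pi}\Bigr)dx \]
plus $(2h+2\hat{\alpha}_r)$ times the pointwise $\Ostar$ error, evaluated at the worst height $t-h-\hat{\alpha}_r$.

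The main terms are concave in $x$, so by Jensen's inequality (concavity) the integral is at most $(2h+2\hat{\alpha}_r)$ times the value at $x=0$. This produces exactly $(2h+2\hat{\alpha}_r)\bigl(\frac{c_{1,r}}{\pi}\log|t|+\frac{c_{2,r}}{\pi}\log\log|t|+\frac{c_{3,r}}{\pi}\bigr)$. Dividing by $D_{r,\alpha}$ yields the $B_1,B_2,B_3$ terms.

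The pointwise error $\frac{5}{6(t'-1)}+\frac{1}{(t'-1)^2}+\frac{579}{8(t'-1)^2\log(t'-1)}$ is bounded using monotonicity at $t'=t-h-\hat{\alpha}_r$. Multiplying by $2(h+\hat{\alpha}_r)$ gives precisely the displayed $\Ostar$ term, including $1158/8$.

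For the outside pieces I would not integrate pointwise bounds. Instead, exchange the order of integration. Proposition~\ref{prop:outside2}, after doubling since the term there carries $\frac{1}{2\pi}$, bounds $-\int\log|\zeta(1+i(t+x))|\,dx$ by $2\Phi(1)$. This uses $|\Re\Phi|\le\Phi(1)$ at both endpoints, and the bound applies equally to the negative of the integral. Proposition~\ref{prop:outside3} bounds the right-half double integral in absolute value by $c_{5,r}$. Together these contribute $(2\Phi(1)+c_{5,r})/D_{r,\alpha}=B_{4,\alpha,r}$, with no dependence on $h$.

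For the final refinement, assume $t>10^{12}$ and $h<t^{2/3}$. Then $h+\hat{\alpha}_r<t^{2/3}+1$ and $t-h-\hat{\alpha}_r-1>t(1-t^{-1/3}-2t^{-1})$. Substituting gives $\frac{5}{3}\cdot\frac{t^{2/3}+1}{t(1-\epsilon)}\le\frac{3.34}{t^{1/3}}$ with $\epsilon\approx10^{-4}$, and similarly $4.01/t^{4/3}$ and $291/(t^{4/3}\log t)$. The logarithm is controlled by $\log(t/2)\ge\log t\,(1-\log 2/\log t)$.

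The main obstacle is the concavity step. I expect the signs to work out: $c_{1,r},c_{2,r}\ge0$ from the tables, and $\log\log$ is concave for $t>e$. The remaining risk is checking that the numerical margins absorb the rounding in the last step.
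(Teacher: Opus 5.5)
Your proposal is correct and follows the paper's proof, which simply combines Proposition~\ref{prop:jensenrectangle} with Propositions~\ref{prop:jensen-easy}, \ref{prop:outside2} and \ref{prop:outside3} in the same way you do. You also supply details the paper leaves implicit: concavity of $\log$ and $\log\log$ together with $c_{1,r},c_{2,r}\ge 0$ to evaluate the main terms at $x=0$; the fact that the argument of Proposition~\ref{prop:outside2} bounds the integral in absolute value, so it also controls $-\int\log|\zeta|$; and the numerics for the $t>10^{12}$ refinement, where the stated constants have roughly a factor of two to spare.
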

\begin{proof}
The result follows from 
Proposition~\ref{prop:jensenrectangle} by applying the bounds from Propositions~\ref{prop:jensen-easy}, \ref{prop:outside2} and \ref{prop:outside3}.
\end{proof}

\begin{table}
\caption{Values for Theorem~\ref{thm:rectangularjensen}.
Values, including $r$, are rounded up.}\label{tab:rectangularjensen}
\setlength{\tabcolsep}{3pt}\footnotesize
\begin{tabular}{|cc|rrrrr|r|}
\hline
$\alpha$&$r$&$B_{1,\alpha,r}$&$B_{2,\alpha,r}$&$B_{3,\alpha,r}$&$B_{4,\alpha,r}$&$\hat{\alpha}_r$&$r/\alpha$ \\
\hline
$ 1/7 $&$0.5647100$&$0.2457053$&$0.3900919$&$1.450119$&$6.518765$&$0.5463417$&$3.952970$\\
$ 1/8 $&$0.5185446$&$0.2294753$&$0.4239720$&$1.514935$&$6.977388$&$0.5032529$&$4.148357$\\
$ 1/9 $&$0.5030821$&$0.2172051$&$0.4340594$&$1.446353$&$6.922300$&$0.4906586$&$4.527739$\\
$ 1/10 $&$0.4821212$&$0.2079206$&$0.4778655$&$1.331336$&$7.058488$&$0.4716364$&$4.821212$\\
$ 1/11 $&$0.3820330$&$0.1991813$&$0.8624038$&$0.9519504$&$9.637060$&$0.3710589$&$4.202362$\\
$ 1/12 $&$0.3390065$&$0.1907470$&$1.076388$&$0.7557944$&$11.128858$&$0.3286046$&$4.068078$\\
$ 1/13 $&$0.3165097$&$0.1832274$&$1.186113$&$0.6552031$&$11.906782$&$0.3070199$&$4.114626$\\
$ 1/14 $&$0.3035626$&$0.1767236$&$1.236003$&$0.6023857$&$12.253958$&$0.2950393$&$4.249876$\\
$ 1/15 $&$0.2957571$&$0.1711536$&$1.252382$&$0.5730817$&$12.349950$&$0.2881455$&$4.436356$\\
$ 1/16 $&$0.2909922$&$0.1663861$&$1.250289$&$0.5551999$&$12.302409$&$0.2842010$&$4.655874$\\
$ 1/32 $&$0.1701799$&$0.1273443$&$2.016620$&$0.8785824$&$20.388630$&$0.1672861$&$5.445757$\\
$ 1/64 $&$0.09924636$&$0.1023306$&$3.292282$&$1.434352$&$33.994587$&$0.09800867$&$6.351767$\\
$ 1/128 $&$0.05717608$&$0.08524602$&$5.501905$&$2.397021$&$57.665580$&$0.05663982$&$7.318538$\\
$ 1/256 $&$0.03250649$&$0.07294429$&$9.396536$&$4.093798$&$99.501950$&$0.03227093$&$8.321661$\\
$ 1/512 $&$0.01825477$&$0.06370419$&$16.349070$&$7.122815$&$174.312148$&$0.01814999$&$9.346441$\\
$ 1/1024 $&$0.01014060$&$0.05652525$&$28.889451$&$12.586295$&$309.383987$&$0.01009347$&$10.383974$\\
$ 1/2048 $&$0.005580532$&$0.05079351$&$51.707199$&$22.527325$&$555.296995$&$0.005559130$&$11.428930$\\
\hline
\end{tabular}
\end{table}
\begin{table}
\caption{Values for $c_{1,r}$, $c_{2,r}$, $c_{3,r}$ as defined in Proposition~\ref{prop:jensen-easy}, and $c_{5,r}$ as defined in Proposition~\ref{prop:outside3} for the unrounded $r$ values from Table~\ref{tab:rectangularjensen}. All values are rounded up.}\label{tab:crvalues3}
\begin{tabular}{|c|cccc|}
\hline
r  & $c_{1,r}$& $c_{2,r}$& $c_{3,r}$& $c_{5,r}$\\
\hline
$0.5647100$&$0.1761601$&$0.8786375$&$3.266227$&$1.078532$\\
$0.5185446$&$0.1548203$&$0.8986260$&$3.210968$&$1.112301$\\
$0.5030821$&$0.1480773$&$0.9296458$&$3.097724$&$1.124066$\\
$0.4821212$&$0.1394940$&$1.007195$&$2.806049$&$1.140402$\\
$0.3820329$&$0.09963112$&$1.355212$&$1.495928$&$1.225349$\\
$0.3390065$&$0.08331889$&$1.477082$&$1.037145$&$1.265982$\\
$0.3165097$&$0.07515015$&$1.528325$&$0.8442389$&$1.288392$\\
$0.3035626$&$0.07062081$&$1.551699$&$0.7562453$&$1.301685$\\
$0.2957571$&$0.06797647$&$1.562641$&$0.7150543$&$1.309847$\\
$0.2909922$&$0.06640649$&$1.567667$&$0.6961340$&$1.314886$\\
$0.1701799$&$0.03157371$&$1.570797$&$0.6843504$&$1.460012$\\
$0.09924635$&$0.01554098$&$1.570797$&$0.6843504$&$1.567632$\\
$0.05717607$&$0.007746955$&$1.570797$&$0.6843504$&$1.645371$\\
$0.03250649$&$0.003881447$&$1.570797$&$0.6843504$&$1.699469$\\
$0.01825477$&$0.001948252$&$1.570797$&$0.6843504$&$1.735811$\\
$0.01014060$&$9.783026\cdot 10^{-4}$&$1.570797$&$0.6843504$&$1.759479$\\
$0.005580530$&$4.911646\cdot 10^{-4}$&$1.570797$&$0.6843504$&$1.774490$\\
\hline
\end{tabular}
\end{table}

\section{Littlewood Zero Detection}\label{sec:littlewood}

An alternative to the use of Jensen's formula is Littlewood's zero counting strategy. 
This is often used to count zeros on large intervals (see for example \cite{KadiriLumleyNg2018,Farzanfard2024,Chourasiya2024,ChourasiyaSimonic2025,Simonic2019}).
This approach can be adapted to the current setting.
In other applications one often mollifies the $L$-function whose zeros are being counted. However, for short intervals this isn't needed.

The core idea of the method arises from the Littlewood's zero detector (see \cite[Section~9.9]{Titchmarsh1986})
\begin{equation}\label{eq:zerodensityintegral}
\begin{aligned}
\int_{\sigma_0}^{\sigma_1} N(\sigma,T_1,T_2),d\sigma &= \frac{1}{2\pi}\Biggl(
 \int_{T_1}^{T_2} \log|f(\sigma_0 +it)|\,dt
- \int_{T_1}^{T_2} \log|f(\sigma_1 +it)|\,dt\\
& + \int_{\sigma_0}^{\sigma_1} \arg f( u+iT_2)\,du
 - \int_{\sigma_0}^{\sigma_1} \arg f( u+iT_1)\,du \Biggr).
 \end{aligned}
 \end{equation}
In the above $\arg f$ is always defined by continuous variation along a vertical line to the right of any zeros (that is, to the right of $1$) followed by passing along a horizontal line, provided that horizontal line does not contain a zero of $f$. One handles the situation where there is a zero on the horizontal line by defining, compatibly with the halving convention for $N(\sigma,T_1,T_2)$ 
\[ \lim_{\epsilon\to0} \frac{1}{2}\left(\arg f(T_2 + \epsilon)+\arg f( T_2 - \epsilon)\right) \] 
so that \eqref{eq:zerodensityintegral} still holds.
 
The standard way to make use of this is to translate it to a bound on $N(T-h,T+h,\alpha)$ through:
\begin{equation}\label{eq:simplelittlewoodzerodensity}
    \begin{aligned} N(T&-h, T+h,\alpha)  \\&
    \leq\frac{1}{2\pi(1-\sigma_0-\alpha)} \Biggl(  \int_{T-h}^{T+h} \log|\zeta(\sigma_0 +it)|\,dt
- \int_{T-h}^{T+h} \log|\zeta(\sigma_1 +it)|\,dt\\
& \qquad\qquad+ \int_{\sigma_0}^{\sigma_1} \arg \zeta( u+i(T+h))\,du
 - \int_{\sigma_0}^{\sigma_1} \arg \zeta( u+i(T-h))\,du \Biggr).
    \end{aligned}
\end{equation}
However, this is not necessarily the best use of Equation \eqref{eq:zerodensityintegral}, see Remark~\ref{rem:useintegralbound}.

We shall bound $\int_{T-h}^{T+h} \log|\zeta(\sigma_0 +it)|\,dt$ trivially.
We will use the approach of Section~\ref{sec:refinement-integration} to bound $-\int_{T-h}^{T+h} \log|\zeta(\sigma_1 +it)|\,dt$ and in Section~\ref{sec:argintegrals} we shall provide an improved method of bounding $\int_{\sigma_1}^{\sigma_0} \arg \zeta(u+i(T+h)) du$ and $\int_{\sigma_1}^{\sigma_0} \arg \zeta(u+i(T-h)) du$.

The improvements from Section~\ref{sec:argintegrals} are crucial to our method, but also provide a strategy to improve the secondary error terms of essentially every recent paper that employs this strategy as they all rely on a Lemma of Kadiri, Lumley and Ng (See \cite[Lemma~4.12]{KadiriLumleyNg2018}) which bounds the integral by bounding the argument at $1/2+iT$ rather than by bounding the integral itself.

\begin{lemma}\label{lemma:littlewood-mainterm}
With notation as defined in Equations \eqref{eq:sigmak}, \eqref{eq:mk}, \eqref{eq:mkp}, \eqref{eq:mkpp} suppose that $\sigma_k\leq\sigma < \sigma_{k+1}$, that $0<h$, and that $T-h>12$,
then
\begin{align*}  \frac{1}{2\pi}\int_{T-h}^{T+h} \log|\zeta(\sigma +it)|\,dt &< h\Biggl(\frac{m_k\sigma + b_k}{\pi}\log|T| + \frac{m_k'\sigma + b_k'}{\pi}\log\log|T| \\&\qquad + \frac{m_k''\sigma + b_k''}{\pi}\Biggr) +  \Ostar\!\left(\frac{h}{\pi(T-h)^2}\left(2+\frac{1158}{8\log(T-h)}\right)\right) .
\end{align*}
\end{lemma}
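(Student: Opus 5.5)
The plan is to bound the integrand pointwise and integrate over $t\in[T-h,T+h]$, which has length $2h$; together with the prefactor $\frac{1}{2\pi}$ this produces the overall factor $\frac{h}{\pi}$.

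\emph{Pointwise bound.} For $\sigma_k\le\sigma<\sigma_{k+1}$ we use the piecewise-linear interpolated bound already underlying Proposition~\ref{prop:jensen-easy}. On each line $\sigma_j$ (via \cite{Yang2024} and Lemma~\ref{lem:zetalessthanhalf}) we have
\[ \log|\zeta(\sigma_j+it)| \le v_j\log|t| + v_j'\log\log|t| + v_j''. \]
Since $\log|\zeta|$ is subharmonic, a Phragm\'en--Lindel\"of (convexity) argument applied to $\zeta(s)\,(\log s)^{-(m_k' s+b_k')}\,t^{-(m_k s+b_k)}$ (suitably normalized) on the strip $\sigma_k\le\Re s\le\sigma_{k+1}$ gives
\[ \log|\zeta(\sigma+it)|\le (m_k\sigma+b_k)\log|t| + (m_k'\sigma+b_k')\log\log|t| + (m_k''\sigma+b_k''). \]
Here the coefficients interpolate linearly because of the definitions \eqref{eq:mk}, \eqref{eq:mkp}, \eqref{eq:mkpp}. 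This is exactly the interpolation used for $F_{1+it,r}$ in Proposition~\ref{prop:jensen-easy}, so I take it as established there, valid for $t$ above the threshold (hence the hypothesis $T-h>12$).

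\emph{Replacing $\log|t|$ by $\log|T|$.} Write $t=T+x$ with $|x|\le h$. Then
\[ \log|T+x| = \log T + \log(1+x/T), \qquad \log\log|T+x| = \log\log T + \log\Bigl(1+\frac{\log(1+x/T)}{\log T}\Bigr). \]
Integrate over $x\in[-h,h]$. The first-order terms in $x/T$ are odd, so they cancel exactly. The remaining terms are second order and are bounded using $|x|\le h$ and $T-|x|\ge T-h$. This is the same Taylor/telescoping computation that produced the error $\frac{2}{(t-r)^2}+\frac{579}{4(t-r)^2\log(t-r)}$ in Proposition~\ref{prop:jensen-easy}. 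Here the shift $r$ is replaced by $x$, and integrating over an interval of length $2h$ with the prefactor $\frac{1}{2\pi}$ turns it into
\[ \frac{h}{\pi(T-h)^2}\Bigl(2+\frac{1158}{8\log(T-h)}\Bigr). \]
(Note that $1158/8=579/4$, consistent with this.) The coefficients $m_k\sigma+b_k$ etc.\ are bounded by absolute constants, which lets them be absorbed into these constants. If some coefficient is negative, the sign of the corresponding second-order remainder flips, but the $\Ostar$ formulation covers both signs.

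\emph{Main obstacle.} The delicate step is the explicit second-order remainder. One must bound $\log(1+u)-u$ and $\log(1+v)-v$, with $v=\log(1+u)/\log T$, uniformly for $|u|\le h/T$. One must also check that the odd parts genuinely vanish after integration, while tracking the possibly negative coefficients $m_k,m_k',m_k''$. Everything else is a direct integration of a linear-in-$\sigma$ bound.
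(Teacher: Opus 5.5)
Your plan (a pointwise bound on $\Re s=\sigma$, integration over an interval of length $2h$, then passing from $\log|t|$ to $\log|T|$) is the one the paper sketches. However, the pointwise bound you quote is not what has been established. Proposition~\ref{prop:interpolation}, and likewise the function $F_{1+it,r}$ in the proof of Proposition~\ref{prop:jensen-easy}, only give
\[ \log|\zeta(\sigma+it)|\le(m_k\sigma+b_k)\log|t|+(m_k'\sigma+b_k')\log\log|t|+(m_k''\sigma+b_k'')+\Ostar\Bigl(\frac{3}{2t^2}+\frac{579}{4t^2\log|t|}\Bigr), \]
together with the tiny error of Lemma~\ref{lem:zetalessthanhalf} when $\sigma<1/2$. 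The error is there because Phragm\'en--Lindel\"of has to be run with analytic majorants such as $1+s$ and $\log(4e+1+s)$. Their moduli agree with $|t|$ and $\log|t|$ only up to $O(t^{-2})$; an expression like $t^{-(m_ks+b_k)}$ is not analytic in $s$. Integrating this error against $\frac{1}{2\pi}\,dt$ over $[T-h,T+h]$ gives at most $\frac{h}{\pi(T-h)^2}\bigl(\frac32+\frac{579}{4\log(T-h)}\bigr)$, and this is what the lemma's $\Ostar$ term pays for. The $\frac{1158}{8}=\frac{579}{4}$ you noticed is the interpolation constant, not a Taylor constant. So you have dropped a genuine error term and spent its budget elsewhere.

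Where you spent it does not work as written either. The second-order remainder from replacing $\log|t|$ and $\log\log|t|$ by $\log|T|$ and $\log\log|T|$ is not $O\bigl(h/(T-h)^2\bigr)$ in absolute value. Indeed $\int_{-h}^{h}\bigl(\log(1+x/T)-x/T\bigr)\,dx\approx-\frac{h^3}{3T^2}$, which exceeds the stated $\Ostar$ term once $h$ is moderately large, and the lemma is applied with $h$ up to $t^{2/3}$.

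What rescues the step is the sign of the remainder. The coefficients $m_k\sigma+b_k$ and $m_k'\sigma+b_k'$ are nonnegative on $[\sigma_k,\sigma_{k+1}]$, being linear interpolations of the nonnegative $v_j$ and $v_j'$. Also $\log t$ and $\log\log t$ are concave for $t>1$, so $\int_{T-h}^{T+h}\log t\,dt\le 2h\log T$ and $\int_{T-h}^{T+h}\log\log t\,dt\le 2h\log\log T$. Hence the remainder is nonpositive and can simply be discarded. Your remark that a negative coefficient would be ``covered by the $\Ostar$ formulation'' is therefore the wrong way round: a negative coefficient would create a positive remainder of order $h^3/T^2$, which the stated error cannot absorb.

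With two corrections your argument proves the lemma: keep the interpolation error and integrate it, and use concavity together with nonnegativity of the coefficients instead of a two-sided Taylor bound.
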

\begin{proof}
This follows immediately from the validity of the bound on $\log|\zeta(\sigma +it)|$.

To obtain the given $\Ostar$ term one should actually integrate the $\log|t|$ and $\log\log|t|$ terms along with the $\Ostar$ error terms from Proposition~\ref{prop:interpolation}, rather than taking the naive bound, and eventually consider the Taylor expansions of various terms as a function of $\frac{h}{t}$.
\end{proof}

\begin{lemma}\label{lemma:littlewood-outerlog}
Suppose that $0\leq h < T$ and $1 \leq \sigma$ then
\[ -\frac{1}{2\pi}\int_{T-h}^{T+h} \log|\zeta(\sigma +it)|\,dt \leq \frac{\Phi(\sigma)}{\pi}
\]
where $\Phi(\sigma)$ is the anti-derivative of $-\log(\zeta)$ as defined in \eqref{eq:phis}.
\end{lemma}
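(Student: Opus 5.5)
The plan is to repeat the argument of Proposition~\ref{prop:outside2}, with the line $1$ replaced by a general $\sigma\geq 1$.

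First suppose $\sigma>1$. On the half-plane $\Re(s)>1$ we have the Dirichlet series $\log\zeta(s)=\sum_{n>1}\frac{\Lambda(n)}{\log(n)n^{s}}$, which converges absolutely. Differentiating $\Phi$ term by term gives $\Phi'(s)=-\log\zeta(s)$ there.

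Along the vertical line $s=\sigma+it$ we have $\frac{d}{dt}\Phi(\sigma+it)=-i\log\zeta(\sigma+it)$. Hence
\[ \int_{T-h}^{T+h}\log\zeta(\sigma+it)\,dt = i\bigl(\Phi(\sigma+i(T+h))-\Phi(\sigma+i(T-h))\bigr). \]
Taking real parts, and using $\Re(i w)=-\Im(w)$, gives
\[ -\int_{T-h}^{T+h}\log|\zeta(\sigma+it)|\,dt = \Im\Phi(\sigma+i(T+h))-\Im\Phi(\sigma+i(T-h)). \]
Each term is at most $|\Phi(\sigma+i\tau)|\leq\sum_{n>1}\frac{\Lambda(n)}{\log(n)^2n^{\sigma}}=\Phi(\sigma)$, because all the coefficients are nonnegative. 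The difference is therefore at most $2\Phi(\sigma)$. Dividing by $2\pi$ gives the claimed bound $\Phi(\sigma)/\pi$.

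Now take $\sigma=1$. I apply the previous step at $1+\epsilon$ and let $\epsilon\to0^+$. The hypothesis $h<T$ keeps the segment $[T-h,T+h]$ away from $t=0$, so the pole at $s=1$ is avoided. It follows that $\log|\zeta(1+\epsilon+it)|$ converges to $\log|\zeta(1+it)|$ uniformly on the compact segment. This uses the fact that $\zeta$ has no zeros on $\Re(s)=1$ and is continuous and nonvanishing on a neighbourhood of the segment, since the segment avoids $s=1$.

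On the right-hand side, $\Phi(1+\epsilon)\to\Phi(1)$ by monotone convergence. The limit is finite: the terms are $\Lambda(n)/(n\log(n)^2)$, whose sum converges, with $\Phi(1)<1.7976$ as recorded in Proposition~\ref{prop:outside2}.

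The only delicate step is the one at $\sigma=1$. There I need $h<T$ strictly, so that the segment stays a positive distance from $1$ and the uniform limit is legitimate. Without that, the logarithmic singularity at the pole would have to be dealt with separately. If $h=0$ the statement is trivial.
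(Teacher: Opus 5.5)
Your proof is correct and follows the paper's route. It applies the fundamental theorem of calculus to the antiderivative $\Phi$ along $\Re(s)=\sigma>1$, bounds $|\Phi(\sigma+i\tau)|\le\Phi(\sigma)$ using the nonnegative coefficients, and handles $\sigma=1$ by an $\epsilon\to0^+$ limit exactly as in the proof of Proposition~\ref{prop:outside2}; your boundary terms $\Im\Phi$ are in fact the correct ones (the paper writes $\Re$ there), though this does not change the bound.
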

 \begin{proof}
This follows from the fundamental theorem of calculus and the absolute convergence of the series defining $\Phi$.
\end{proof}

We shall denote by 
\[ \Delta \arg f(x)|_a^b = \arg(f(b)) - \arg(f(a)) \]
the change in $\arg f$ along the line from $a$ to $b$. We shall only use this when there should be no ambiguity about the path.
\begin{lemma}\label{lem:littlewood-argsetup}
   Suppose that $1< \sigma_1$ and $\sigma_0\leq\sigma_1$ then
    \[ |\arg(\zeta(\sigma_1 +iT+ih)) - \arg(\zeta(\sigma_1 +iT-ih))| \leq 2\log(\zeta(\sigma_1)) \]
    and hence
    \begin{align*}
    \frac{1}{2\pi}\Biggl(\int_{\sigma_0}^{\sigma_1} &
     \arg \zeta(u+i(T+h))\,du - \int_{\sigma_0}^{\sigma_1} \arg \zeta(u+i(T-h))\,du \Biggr)\\
    &\leq \frac{\sigma_1-\sigma_0}{\pi}\log(\zeta(\sigma_1)) +
    \frac{1}{2\pi}\int_{\sigma_0}^{\sigma_1} \Delta\arg\zeta(iT+ih+x)|_{\sigma_1}^{u}\,du \\&
     \qquad- \frac{1}{2\pi}\int_{\sigma_0}^{\sigma_1} \Delta\arg\zeta(iT-ih+x)|_{\sigma_1}^{u}\,du.
    \end{align*}
\end{lemma}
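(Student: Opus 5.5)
The first claim follows from the Dirichlet series for $\log\zeta$ in $\Re(s)>1$. For $\sigma_1>1$ we can write $\log\zeta(\sigma_1+i\tau)=\sum_{n\ge 2}\Lambda(n)/(\log(n)\, n^{\sigma_1+i\tau})$, with the branch that is real on the real axis; this branch agrees with the continuous-variation convention for $\arg\zeta$ used in \eqref{eq:zerodensityintegral}. Taking imaginary parts and bounding each term in absolute value gives
\[ |\arg\zeta(\sigma_1+i\tau)| \le \sum_{n\ge2}\frac{\Lambda(n)}{\log(n)\,n^{\sigma_1}} = \log\zeta(\sigma_1). \]
Applying this at $\tau=T+h$ and at $\tau=T-h$ and using the triangle inequality yields
\[ |\arg\zeta(\sigma_1+iT+ih)-\arg\zeta(\sigma_1+iT-ih)|\le 2\log\zeta(\sigma_1). \]

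For the second claim, we decompose the argument at each point of the horizontal segments. For $u\in[\sigma_0,\sigma_1]$ write
\[ \arg\zeta(u+i(T\pm h)) = \arg\zeta(\sigma_1+i(T\pm h)) + \Delta\arg\zeta(iT\pm ih+x)|_{\sigma_1}^{u}. \]
This holds because the argument at $u+i(T\pm h)$ is defined by continuous variation down from the right of $1$ and then horizontally leftward, so it equals the value at $\sigma_1+i(T\pm h)$ plus the change along the horizontal segment from $\sigma_1$ to $u$. If a zero lies on the horizontal line, the halving convention gives the same identity after averaging the limits from $\epsilon$ above and below.

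Integrating this identity in $u$ over $[\sigma_0,\sigma_1]$ and subtracting the $T-h$ version from the $T+h$ version, the terms evaluated at $\sigma_1$ contribute
\[ (\sigma_1-\sigma_0)\bigl(\arg\zeta(\sigma_1+iT+ih)-\arg\zeta(\sigma_1+iT-ih)\bigr) \le 2(\sigma_1-\sigma_0)\log\zeta(\sigma_1), \]
using the first claim and $\sigma_1\ge\sigma_0$. Dividing by $2\pi$ turns this into $\frac{\sigma_1-\sigma_0}{\pi}\log\zeta(\sigma_1)$, and the remaining terms are exactly the two $\Delta\arg$ integrals in the statement.

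The only point needing care is that the branch of $\log\zeta$ given by the Dirichlet series coincides with the continuously varied argument to the right of $1$. This holds because both branches are continuous on the half-plane $\Re(s)>1$, which is zero-free, and both tend to $0$ as $\sigma\to\infty$. Everything else is routine bookkeeping.
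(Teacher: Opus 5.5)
Your proof is correct and follows essentially the paper's route. The paper's one-line proof bounds $|\arg\zeta(\sigma_1+iT\pm ih)|\le|\log\zeta(\sigma_1+iT\pm ih)|\le\log\zeta(\sigma_1)$ using the nonnegative Dirichlet coefficients of $\log\zeta$, which is the same estimate as your termwise bound. It leaves implicit the decomposition $\arg\zeta(u+i\tau)=\arg\zeta(\sigma_1+i\tau)+\Delta\arg\zeta(i\tau+x)|_{\sigma_1}^{u}$, the branch identification, and the use of $\sigma_1-\sigma_0\ge 0$, all of which you spell out correctly.
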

\begin{proof}
    We have $|\arg(\zeta(\sigma +iT+ih) | < |\log(\zeta(\sigma +iT+ih) | < \log(\zeta(\sigma))$.
\end{proof}

In the next section we discuss how to bound the integral of the change in argument of a function.

\subsection{Bounding Integrals of $\Delta \arg f(x)$}\label{sec:argintegrals}

Backlund's method has become a standard method to bound the argument of an $L$-function in the critical strip \cite{Backlund1916,Trudgian2014,HasanalizadeShenWong2022,BellottiWongFiori2025}. 

However, in the context of the use of Littlewood's zero detection method as above we do not need to bound the argument per-se but rather we can write
\[ \int_{\sigma_1}^{\sigma_0} \arg f(\sigma+iT)\,d\sigma = (\sigma_0- \sigma_1)\arg f(\sigma_1+iT) + \int_{\sigma_1}^{\sigma_0} \Delta\arg f( x + i T)|_{\sigma_1}^u\,du. \]
We provide here a re-interpretation of Backlund's trick which is tuned to bounding directly
\[ \int_{\sigma_1}^{\sigma_0} \Delta\arg f( x + i T)|_{\sigma_1}^u \,du. \]
For $L$-functions this method is particularly well adapted to the situation where $\sigma_1 > 1/2$.
The advantage of our approach is that we don't need to consider $f$ in the region to the left of $\sigma_1$, for instance on the left half of the critical strip where its values are known to be large by virtue of the functional equation and the $\Gamma$ factors.

A standard strategy to bound the change in the argument of a function is to count the number of zeros of $\Re(f)$ along the path.
This can be amplified by considering 
\[ F_N(u) = \frac{1}{2}\left(f(u+c+iT)^N + f(u+c-iT)^N\right) \]
which has $\Re(f(c+u+iT)^N) = F_N(u)$.

The following Lemma provides the core of the new idea here, which is to recognize we wish to integrate this number of zeros, rather than focus on bounding the number of zeros.
\begin{lemma}\label{lem:integralofarg}
Fix $T>0$, $N>0$, $\sigma_0<\sigma_1$ and $c>(\sigma_0+\sigma_1)/2$ then
\[ \Bigl|\int_{\sigma_1}^{\sigma_0} \Delta\arg f( x + i T)|_{\sigma_1}^u\,du\Bigr| \leq \frac{\pi}{N}\left( (\sigma_1-\sigma_0) + C_{c,\sigma_0,\sigma_1}\int_0^{c-\sigma_0} \frac{n_{F_N}(u)}{u}\,du \right) \]
where
\[  F_N(u) = \frac{1}{2}\left(f(u+c+iT)^N + f(u+c-iT)^N)\right) \]
and $n_{F_N}(u)$ denotes the number of complex zeros for the function $F_N$ in a ball of radius $u$ and
\[ C_{c,\sigma_0,\sigma_1}= \sup_{\sigma\in[\sigma_0,\sigma_1]}\left(\frac{\sigma - \sigma_0}{\log(c-\sigma_0)-\log|\sigma -c|} \right). \]
\end{lemma}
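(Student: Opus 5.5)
The plan is to prove the bound pointwise in $u$ and then integrate. For each $u\in[\sigma_0,\sigma_1]$, first bound the change in argument along the horizontal segment from $\sigma_1+iT$ to $u+iT$. Write $g(x)=f(x+iT)^N$. Then $N\,\Delta\arg f(x+iT)|_{\sigma_1}^u=\Delta\arg g|_{\sigma_1}^u$. On the real line we have $F_N(x-c)=\Re g(x)$, since $f(\bar s)=\overline{f(s)}$; this symmetry is implicit in the definition of $F_N$. Between consecutive zeros of $\Re g$ on the segment, $g$ stays in a half plane, so its argument changes by less than $\pi$. Hence
\[ |\Delta\arg g|_{\sigma_1}^u| \le \pi\,(1+\#\{x\in[u,\sigma_1]: F_N(x-c)=0\}). \]
Dividing by $N$ and integrating the term $\pi/N$ over $u$ produces the first term $\frac{\pi}{N}(\sigma_1-\sigma_0)$.

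Next, count the real zeros of $F_N$ on $[u-c,\sigma_1-c]$ using disks centered at $0$, the center of the disk around $c$. Every such zero $x$ satisfies $|x-c|\le\max(c-u,\sigma_1-c)$. Because $c>(\sigma_0+\sigma_1)/2$, we have $\sigma_1-c<c-\sigma_0$, so all of these zeros lie in the disk of radius $c-\sigma_0$. The integrated count is
\[ \int_{\sigma_0}^{\sigma_1}\#\{\text{zeros in }[u,\sigma_1]\}\,du=\sum_{\rho}(\rho-\sigma_0)\quad\text{(sum over real zeros } \rho\in[\sigma_0,\sigma_1]). \]
Each zero $\rho$ is thus weighted by $\rho-\sigma_0$. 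Compare this weight with the contribution of $\rho$ to $\int_0^{c-\sigma_0} n_{F_N}(v)/v\,dv$. A zero at distance $|\rho-c|$ from the center contributes $\log(c-\sigma_0)-\log|\rho-c|$ to that Jensen-type integral. By the definition of $C_{c,\sigma_0,\sigma_1}$ as a supremum,
\[ \rho-\sigma_0\le C_{c,\sigma_0,\sigma_1}\bigl(\log(c-\sigma_0)-\log|\rho-c|\bigr). \]
Summing over the real zeros, and noting that the non-real zeros contribute nonnegatively to the integral, gives the second term. Multiplicities are handled consistently, and the halving convention costs nothing.

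The main obstacle is the rigorous pointwise step. One must make sure the count of sign changes of $\Re g$ is bounded by the zeros of $F_N$ counted with multiplicity, and handle degenerate cases: $F_N\equiv 0$, and zeros of $g$ itself on the path, where the argument is defined by the limiting convention. I would first dispose of the case where $f$ has zeros on the segment by a perturbation or limiting argument. For $F_N\not\equiv0$, analyticity gives finitely many zeros, and between consecutive zeros $\Re g$ has a fixed sign, so the argument change is at most $\pi$. This gives the $1+\#$ count, which suffices. Finally, the sign issue in the statement (the integral runs from $\sigma_1$ to $\sigma_0$) is handled by the absolute value.
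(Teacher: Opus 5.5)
Your proposal is correct and follows essentially the same route as the paper. In both, $\Delta\arg f$ is bounded by a step function that jumps by $\pi/N$ at each zero of $F_N(x-c)=\Re f(x+iT)^N$; integrating in $u$ produces the weights $\rho-\sigma_0$, which are compared with the Jensen weights $\log\bigl((c-\sigma_0)/|\rho-c|\bigr)$ through the supremum defining $C_{c,\sigma_0,\sigma_1}$, and the remaining nonnegative contributions are discarded. Restricting the sum to real zeros on $[\sigma_0,\sigma_1]$, and using $\sigma_1-c<c-\sigma_0$ to place them in the disk, is if anything slightly tidier than the paper's sets $S_L\cup S_R$.
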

\begin{proof}
First, note that we can reduce to the case of having no zeros on the horizontal line through the halving convention described after \eqref{eq:zerodensityintegral} using uniform continuity. 

Next, we note that both the left and right hand integrals can be bounded in terms of zeros of $F_N$.

Let $S_L$ denote the zeros of $\Re(f)$ between $c$ and $\sigma_0$ and let $S_R$ denote the zeros between $c$ and $\sigma_1$.
We have
\[ \left|\int_{\sigma_1}^{\sigma_0} \Delta\arg f( x + i T)|_{\sigma_1}^u\,du \right| \leq \frac{\pi}{N} \left((\sigma_1-\sigma_0)+ \sum_{\rho\in S_L\cup S_R} (\Re(\rho) -\sigma_0) \right). \]
This follows from the observation that $\arg f$ can be bounded by a step function which increases by $\frac{\pi}{N}$ every time $\Re(f(u)^N) = 0$, that is, precisely when $F_N(u) = 0$, and which starts at the value $\frac{\pi}{N}$ at $u=\sigma_1$.
The formula on the right is precisely the integral of this step function.

On the other hand, the right hand integral can be rewritten as
\begin{align*}
\int_0^{c-\sigma_0} \frac{n_{F_N}(u)}{u}\,du
& = \sum_{\substack{F_N(\rho)=0\\ |\rho|<c-\sigma_0}} \log\left(\frac{c-\sigma_0}{|\rho|}\right)
\\ &\geq \sum_{\rho\in S_L} \log\left(\frac{c-\sigma_0}{c-\Re(\rho)}\right)
+ \sum_{\rho\in S_R}   \log\left(\frac{c-\sigma_0}{\Re(\rho)-c}\right),
\end{align*}
since every real zero is a complex zero and the dropped terms are nonnegative.
As we have defined $C_{c,\sigma_0,\sigma_1}$ to be the maximum possible ratio between the weight for a zero in the respective sums, this gives the result.
\end{proof}

Depending on where $c$ is in the interval $((\sigma_0+\sigma_1)/2, \infty)$ the supremum defining $C_{c,\sigma_0,\sigma_1}$ will occur at different places.

 We define $\eta \in(0,1)$ to be the unique solution to 
\begin{equation}\label{eq:eta}
1+\eta + \log(\eta)=0.
\end{equation}
Note that $\eta = 0.27846454276\ldots$

\begin{lemma}\label{lem:supremumforarg}
We have
\[
\sup_{\sigma\in[\sigma_0,c]}\left(\frac{\sigma  - \sigma_0}{\log(c-\sigma_0)-\log|\sigma -c|} \right) \leq c-\sigma_0
\]
and, provided $\frac{\sigma_0+\sigma_1}{2} < c$,
\[
\sup_{\sigma\in[c,\sigma_1]}\left(\frac{\sigma  - \sigma_0}{\log(c-\sigma_0)-\log|\sigma -c|} \right) \leq \frac{\sigma_1-\sigma_0}{\log(c-\sigma_0)-\log|\sigma_1-c|}.
\]
Moreover, if $\sigma_1-c \leq \eta(c-\sigma_0)$ where $\eta$ is as defined in \eqref{eq:eta}
then
\[
 \sup_{\sigma\in[\sigma_0,\sigma_1]}\left(\frac{\sigma  - \sigma_0}{\log(c-\sigma_0)-\log|\sigma -c|} \right) \leq c-\sigma_0
\]
\end{lemma}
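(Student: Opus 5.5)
The plan is a direct one-variable calculus argument. Set $a := c-\sigma_0>0$ and $g(\sigma) := \frac{\sigma-\sigma_0}{\log(c-\sigma_0)-\log|\sigma-c|}$. In each of the three claims I will substitute a normalized distance from $c$, which reduces everything to an elementary inequality in one variable $y$. Throughout, $g(c)$ is interpreted as $0$, since its denominator is $+\infty$ there; this is consistent with the weights in Lemma~\ref{lem:integralofarg}.

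\emph{First claim.} For $\sigma\in[\sigma_0,c)$ write $c-\sigma = ya$ with $y\in(0,1]$. Then $\sigma-\sigma_0 = a(1-y)$ and the denominator equals $-\log y\geq 0$. So $g(\sigma)\le a$ is equivalent to $1-y\le -\log y$, which is the standard inequality $\log y\le y-1$. At $y=1$ (that is, $\sigma=\sigma_0$) both sides vanish, and we read $g(\sigma_0)=0$ as a limit or by convention, so the bound holds there too.

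\emph{Second claim.} For $\sigma\in(c,\sigma_1]$ write $\sigma-c=x$. The hypothesis $c>(\sigma_0+\sigma_1)/2$ gives $\sigma_1-c<c-\sigma_0=a$, so $0<x<a$. Then $g = \frac{a+x}{\log(a/x)}$. As $x$ increases, the numerator is positive and increasing, while the denominator is positive and decreasing. Hence $g$ is increasing in $x$, and its supremum over $[c,\sigma_1]$ is attained at $\sigma=\sigma_1$, which is exactly the stated bound.

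\emph{Third claim.} By the first claim it suffices to bound $g$ on $[c,\sigma_1]$ by $a$. Put $y=x/a\in(0,\eta]$; here $\eta<1$, so this lies inside the range handled in the second claim. The inequality $\frac{a+x}{\log(a/x)}\le a$ becomes $1+y\le -\log y$, that is, $\psi(y):=1+y+\log y\le 0$. Since $\psi$ is strictly increasing on $(0,\infty)$ and $\psi(\eta)=0$ by \eqref{eq:eta}, we get $\psi(y)\le 0$ for all $y\le\eta$. Combining this with the first claim gives the bound on all of $[\sigma_0,\sigma_1]$.

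There is no real obstacle. The only points needing care are the endpoint conventions ($\sigma=c$, where the weight vanishes, and $\sigma=\sigma_0$) and checking that the denominators are positive, which is precisely where the hypothesis $c>(\sigma_0+\sigma_1)/2$ is used.
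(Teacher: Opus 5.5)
Your proof is correct. For the second and third claims it is essentially the paper's argument. The paper also notes that the ratio is increasing to the right of $c$. It then bounds the value at $\sigma_1$ using $\sigma_1-c\le\eta(c-\sigma_0)$ together with $1+\eta=-\log\eta$. You make the same comparison pointwise through $\psi(y)=1+y+\log y\le 0$, which is equivalent.

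The real difference is in the first claim. The paper differentiates and looks for a possible interior critical point $\hat\sigma$ solving $(\log(c-\sigma_0)-\log(c-\hat\sigma))(c-\hat\sigma)=\hat\sigma-\sigma_0$. It shows the critical value would be $c-\hat\sigma\le c-\sigma_0$, and then compares this with the endpoint limits. Your normalization $c-\sigma=ya$ reduces the claim directly to $\log y\le y-1$. This is shorter and avoids the hedged critical-point analysis. In fact $(1-y)/(-\log y)$ is increasing on $(0,1)$, so the ratio is monotone to the left of $c$ and the paper's ``potential'' critical point never occurs.

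One small slip in your endpoint discussion. As $\sigma\to\sigma_0^+$ the ratio does not tend to $0$; it tends to $a=c-\sigma_0$, because $(1-y)/(-\log y)\to 1$ as $y\to 1$. This is the ``limit at the left end point'' the paper refers to, and it shows the first bound is sharp. It does not affect your proof. You establish $g<a$ on $(\sigma_0,c)$, and at $\sigma=\sigma_0$ the expression is $0/0$, so either convention ($0$ or the limit $a$) satisfies the bound. In the application in Lemma~\ref{lem:integralofarg}, a zero of $\Re f$ at $\sigma_0$ contributes $0$ to both weights anyway. You should just drop the words ``as a limit''.
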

\begin{proof}
We are looking for extreme values of
\[ \frac{\sigma - \sigma_0}{\log(c-\sigma_0)-\log(|\sigma-c|)}.\]
differentiating with respect to $\sigma$ gives
\[ \frac{\log(c-\sigma_0)-\log(|\sigma-c|)  + \frac{\sigma-\sigma_0}{\sigma-c}}{(\log(c-\sigma_0)-\log(|\sigma-c|))^2}. \]
To the left of $c$, there is potentially a critical point which occurs at $\hat{\sigma}$ a solution to
\[  (\log(c-\sigma_0)-\log(c-\hat{\sigma}))(c-\hat{\sigma}) = (\hat{\sigma}-\sigma_0)  \]
substituting this back into 
\[ \frac{\sigma - \sigma_0}{\log(c-\sigma_0)-\log(|\sigma-c|)}.\]
tells us the critical value will be
\[(c-\hat{\sigma}) \leq c-\sigma_0 .\]
As this is in fact the value of the limit at the left end point of the interval, and since the ratio tends to $0$ as $\sigma\rightarrow c$ this bounds the supremum, which gives us the first claim.

On the right of $c$ we see that the function is increasing, this gives the second claim.

For the final claim, if $(\sigma_1 - c) \leq \eta(c-\sigma_0)$ then
\begin{align*}
  \frac{\sigma_1 -\sigma_0}{
\log(c-\sigma_0) - \log(\sigma_1-c)}  
&=
\frac{(c-\sigma_0) + (\sigma_1 - c)}{
\log(c-\sigma_0) - \log(\sigma_1-c)}\\
&\leq  \frac{ (1+\eta)(c-\sigma_0)}{-\log(\eta)}
= c-\sigma_0
\end{align*}
so that the extreme value from the right is not larger than $c-\sigma_0$.
\end{proof}

\begin{remark}
The above optimization will allow us to consider having $c=1$ even if $\sigma_1>1$. However, it should be noted that in applications of Backlund's trick to bounding $N(\sigma,T)$ there can be lower bounds on $\sigma_1$ arising from the need to bound $\arg(f(\sigma_1+iT))$ or to bound an integral outside the critical strip. 

These lower bounds are often of the form $\sigma_1>1+\frac{k\log\log(T)}{\log(T)}$, (see \cite{Farzanfard2024}). Thus, when considering $N(\sigma,T)$ for very small $\sigma$ extra care must be taken. 
\end{remark}

\begin{theorem}\label{thm:arg-integrals}
Assume $c\in [\sigma_0,\sigma_1]$ satisfies $\sigma_1 - c \leq \eta(c-\sigma_0) $ where $\eta$ is as defined in Equation \eqref{eq:eta}.
Let $r=c-\sigma_0$.
Suppose $f$ is analytic on the disc of radius $r$ centered at $c+iT$, that $\overline{f(s)} = f(\overline{s})$ and that $f$ has no zeros on the line connecting $\sigma_0+iT$ to $\sigma_1+iT$.
 Suppose $F_{c,r}(\theta)$ is an even, integrable real valued function such that
\[ \max\left(\log|f(c+re^{i\theta}+iT)|,\log|f(c+re^{i\theta}-iT)|\right) \leq F_{c,r}(\theta). \]
Then 
\begin{align*} \frac{1}{2\pi}\Biggl| \int_{\sigma_1}^{\sigma_0} \arg f(\sigma+iT)&\,d\sigma - (\sigma_0 - \sigma_1)\arg f(\sigma_1+iT)\Biggr| \\
 &\leq
\frac{c - \sigma_0}{2}\left(
-  \log|f(c+iT)| 
+ \frac{1}{\pi}\int_{0}^{\pi} F_{c,r}(\theta)\,d\theta \right).
\end{align*}
In the above $\arg f$ denotes a continuous argument of $f$ along the line segment connecting $\sigma_0+iT$ to $\sigma_1+iT$.
\end{theorem}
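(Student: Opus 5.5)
The plan is to combine Lemma~\ref{lem:integralofarg} with Lemma~\ref{lem:supremumforarg}, and then bound the resulting zero-counting integral with Jensen's formula, exactly as in Section~\ref{sec:jensenclassic}. Finally we let $N\to\infty$.

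First, the quantity on the left equals $\frac{1}{2\pi}\bigl|\int_{\sigma_1}^{\sigma_0}\Delta\arg f(x+iT)|_{\sigma_1}^u\,du\bigr|$, by the decomposition displayed at the start of Section~\ref{sec:argintegrals}. Since $\sigma_1-c\le\eta(c-\sigma_0)$, the final claim of Lemma~\ref{lem:supremumforarg} gives $C_{c,\sigma_0,\sigma_1}\le c-\sigma_0=r$. We need $c>(\sigma_0+\sigma_1)/2$ for Lemma~\ref{lem:integralofarg}; this holds because $\eta<1$, unless $\sigma_0=\sigma_1$, which is trivial. Lemma~\ref{lem:integralofarg} then bounds the left side by
\[\frac{1}{2N}\Bigl((\sigma_1-\sigma_0)+r\int_0^{r}\frac{n_{F_N}(u)}{u}\,du\Bigr).\]
Here the radius of the disc in that lemma is $c-\sigma_0=r$. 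The disc for $F_N$ is centred at $c$ in the $u$-variable, and I will treat $F_N$ as a function of $z$ with $F_N(z)=\frac12(f(c+z+iT)^N+f(c+z-iT)^N)$.

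Next, Jensen's formula for $F_N$ on $|z|\le r$ gives
\[\int_0^r\frac{n_{F_N}(u)}{u}\,du=\frac{1}{2\pi}\int_0^{2\pi}\log|F_N(re^{i\theta})|\,d\theta-\log|F_N(0)|.\]
We have $F_N(0)=\Re(f(c+iT)^N)$, because $f(c-iT)=\overline{f(c+iT)}$ by the reflection hypothesis. Using the triangle inequality and the hypothesis on $F_{c,r}$, we get $\log|F_N(re^{i\theta})|\le N F_{c,r}(\theta)$, with no constant loss since we average two terms each bounded by $e^{NF}$. Since $F_{c,r}$ is even, $\frac{1}{2\pi}\int_0^{2\pi}F_{c,r}=\frac1\pi\int_0^\pi F_{c,r}$.

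The main obstacle is the lower bound on $|F_N(0)|=|f(c+iT)|^N|\cos(N\phi)|$, where $\phi=\arg f(c+iT)$. I would handle it by choosing $N$ along a sequence with $|\cos(N\phi)|\ge\delta>0$ fixed. This is always possible: if $\phi/\pi$ is irrational, use equidistribution; if it is rational, pick suitable residues, or perturb $T$ slightly and use continuity. Then $-\frac1N\log|F_N(0)|\le-\log|f(c+iT)|+\frac{\log(1/\delta)}{N}$.

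Dividing by $N$ and letting $N\to\infty$ along this sequence, the terms $(\sigma_1-\sigma_0)/(2N)$ and $\log(1/\delta)/N$ vanish. What remains is
\[\frac{r}{2}\Bigl(-\log|f(c+iT)|+\frac1\pi\int_0^\pi F_{c,r}(\theta)\,d\theta\Bigr),\]
which is the claimed bound. If needed, the halving convention and uniform continuity, as in the proof of Lemma~\ref{lem:integralofarg}, justify any perturbations used to avoid zeros.
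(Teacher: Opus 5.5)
Your proposal is correct and follows essentially the same route as the paper: Lemma~\ref{lem:integralofarg} with $C_{c,\sigma_0,\sigma_1}\le r$ from Lemma~\ref{lem:supremumforarg}, then Jensen's formula for $F_N$, the bound $\frac1N\log|F_N(re^{i\theta})|\le F_{c,r}(\theta)$, and a limit in $N$. Choosing $N$ with $|\cos(N\phi)|\ge\delta$ is an explicit form of the Backlund trick the paper cites, and halving the $\theta$-integral via evenness of $F_{c,r}$ rather than the paper's symmetry $\overline{F_N(re^{i\theta})}=F_N(re^{-i\theta})$ is an equally valid minor variant.
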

\begin{proof}
First, note that we can reduce to the case of having no zeros on the horizontal line through the halving convention described after \eqref{eq:zerodensityintegral} using uniform continuity. 

Now, beginning from
\begin{align*}
 \frac{1}{2\pi}\left|\int_{\sigma_1}^{\sigma_0} \arg f(\sigma+iT)\,d\sigma - (\sigma_0-\sigma_1)\arg f(\sigma_1+iT)\right|=
\frac{1}{2\pi}\left|\int_{\sigma_1}^{\sigma_0} \Delta\arg f( x + i T)|_{\sigma_1}^u\,du\right|
\end{align*}
Next, we note (as in \cite[Lemma~3.3]{HasanalizadeShenWong2022}) that there exists an infinite collection of $N$ such that $F_N(0)\neq 0$, and for such $N$,
by applying Lemmas~\ref{lem:integralofarg}, \ref{lem:supremumforarg} and then Jensen's Theorem, we have
\begin{align*}
& \frac{1}{2\pi}\left|\int_{\sigma_1}^{\sigma_0} \arg f(\sigma+iT)\,d\sigma - (\sigma_0-\sigma_1)\arg f(\sigma_1+iT)\right|\\
&\leq \frac{\sigma_1-\sigma_0}{2N} + \frac{c-\sigma_0}{2N}
     \int_{0}^{r} \frac{ n_{F_N}(u)}{u}\,du\\
&=\frac{\sigma_1-\sigma_0}{2N} + \frac{c-\sigma_0}{2}\Biggl(
-\frac{1}{N}\log|F_N(0)|+   \frac{1}{2\pi N}\int_{-\pi/2}^{3\pi/2}\log|F_N(re^{i\theta})|\,d\theta \Biggr).\\
&=\frac{\sigma_1-\sigma_0}{2N} + \frac{c-\sigma_0}{2}\Biggl(
-\frac{1}{N}\log|F_N(0)|
\\&\qquad +   \frac{1}{2\pi N}\int_{0}^{\pi}\left(\log|F_N(re^{i\theta})|+\log|F_N(re^{-i\theta})|\right)\,d\theta \Biggr).\\
&=\frac{\sigma_1-\sigma_0}{2N} + \frac{c-\sigma_0}{2}\Biggl(
-\frac{1}{N}\log|F_N(0)| +   \frac{1}{\pi N}\int_{0}^{\pi}\log|F_N(re^{i\theta})|\,d\theta \Biggr).
\end{align*}
Note that the last line is using $\overline{F_N(re^{i\theta})} = F_N(re^{-i\theta})$ so that they have the same absolute values.

We may apply Backlund's trick (as in \cite[Lemma~3.3]{HasanalizadeShenWong2022}) to get a sequence $N_m$ such that
\[ \lim_{m\rightarrow\infty} \log|F_{N_m}(c + iT)|  = N_m \log|f(c+iT)|. \]
Moreover, we have
\begin{align*}
  &\frac{1}{N}\log \frac{1}{2}\left| f(c + iT + re^{i\theta})^N + f(\sigma + iT + re^{i\theta})^N\right|\\ &\qquad \leq 
   \frac{1}{N}\log \max\left(\left|f(c + iT + re^{i\theta})\right|^N, \left|f(\sigma - iT + re^{i\theta})\right|^N\right) \\ 
  &\qquad = \log \max\left(\left|f(c + iT + re^{i\theta})\right|, \left|f(c + iT + re^{-i\theta})\right|\right) \\
  &\qquad \leq F_{c,r}(\theta). 
\end{align*}
Taking $m$ to infinity we obtain
\begin{align*}
&\frac{1}{2\pi}\left|\int_{\sigma_1}^{\sigma_0} \arg f(\sigma+iT)\,d\sigma
- (\sigma_0-\sigma_1)\arg f(\sigma_1+iT)\right|\\&\qquad\leq
\frac{c-\sigma_0}{2}\left(-\log|f(c + iT)|
+  \frac{1}{\pi}\int_{0}^\pi F_{c,r}(\theta)\,d\theta\right) .\qedhere
\end{align*}
\end{proof}

\subsection{Application to $\zeta$ for zero density for short intervals}\label{sec:littlewoodshortapp}

\begin{theorem}\label{thm:littlewoodshort}
  With notation as defined in Equations \eqref{eq:sigmak}, \eqref{eq:mk}, \eqref{eq:mkp}, \eqref{eq:mkpp}, and $\eqref{eq:eta}$.
  Suppose $t-h>13$, $0<h$ and $0<\alpha<r<1$.
  Suppose that $\sigma_k\leq(1-r) < \sigma_{k+1}$
  then
\begin{align*} N(t-h,t+h,\alpha) &<
\frac{2A_{1,\alpha,r}h + 2A_{4,\alpha,r}}{\pi}\log|t| +(2A_{2,\alpha,r}h  +  2A_{5,\alpha,r})\log\log|t| \\&\quad+2A_{3,\alpha,r}h
 + A_{6,\alpha,r}
\\&\quad+\Ostar\!\Biggl(\frac{1}{r-\alpha} \Biggl(\frac{h}{\pi(t-h)^2}\left(2+\frac{1158}{8\log(t-h)}\right) 
\\&\qquad+ \frac{r}{4}\biggl(
 \frac{2r}{3(t+h-r)} + \frac{r}{(t+h-r)\log(t+h) - r} + \frac{2}{(t+h-r)^2}
 \\&\qquad\qquad+\frac{1158}{8(t+h-r)^2\log(t+h-r)} +\frac{2r}{3(t-h-r)} 
 \\&\qquad\qquad+ \frac{r}{(t-h-r)\log(t-h) - r} + \frac{2}{(t-h-r)^2}
 \\&\qquad\qquad+\frac{1158}{8(t-h-r)^2\log(t-h-r)}
\biggr)\Biggr)\Biggr)
\end{align*}
where $A_{1,\alpha,r},\ldots,A_{6,\alpha,r}$ are defined by: 
\begin{align*}
A_{1,\alpha,r} &= \frac{m_k (1-r) + b_k}{2(r-\alpha)}\\
A_{2,\alpha,r} &= \frac{m_k' (1-r) + b_k'}{2\pi(r-\alpha)} \\
A_{3,\alpha,r} &= \frac{m_k''(1-r) + b_k''}{2\pi(r-\alpha)} \\
A_{4,\alpha,r} &= \frac{c_{1,r}r}{(r-\alpha)} \\
A_{5,\alpha,r} &= \frac{c_{2,r}r + \pi r}{\pi(r-\alpha)}\\
A_{6,\alpha,r} &= \frac{c_{3,r}r+  c_{4,r}\pi r + \pi\log(29.388)r+ \Phi(1+\eta r)  + (1+\eta)r\log(\zeta(1+\eta r))}{\pi(r-\alpha)}.
\end{align*}
Calculated values for these constants are provided in Table~\ref{tab:littlewoodshort}.

If in addition $t>10^{12}$ and  $h<t^{2/3}$, then the $\Ostar$ term can be replaced with
\[
\Ostar\!\left(\frac{1}{t(r-\alpha)}\left(0.334 + \frac{0.502}{\log|t|} + \frac{1.001}{t} + \frac{72.48}{t\log|t|} + \frac{0.64}{t^{1/3}} + \frac{46.3}{t^{1/3}\log|t|}\right)\right) .
\]
%

\end{theorem}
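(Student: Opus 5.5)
I will apply the Littlewood detector \eqref{eq:simplelittlewoodzerodensity} with $\sigma_0 = 1-r$ and $\sigma_1 = 1+\eta r$, so that $1-\sigma_0-\alpha = r-\alpha$. This explains the common factor $\frac{1}{r-\alpha}$ in all the $A_{i,\alpha,r}$. Every zero with $\Re(\rho)>1-\alpha$ and $t-h\le\Im(\rho)\le t+h$ contributes at least $r-\alpha$ to $\int_{\sigma_0}^{\sigma_1}N(\sigma,t-h,t+h)\,d\sigma$, and the halving convention is compatible with \eqref{eq:zerodensityintegral}. The right-hand side of \eqref{eq:zerodensityintegral} then splits into four pieces, which I bound in turn.

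\emph{(i) Left vertical integral.} Since $\sigma_k\le 1-r<\sigma_{k+1}$, Lemma~\ref{lemma:littlewood-mainterm} gives
\[ \frac{1}{2\pi}\int_{t-h}^{t+h}\log|\zeta(1-r+it)|\,dt < h\Bigl(\tfrac{m_k(1-r)+b_k}{\pi}\log|t|+\cdots\Bigr) + \Ostar(\cdots). \]
After dividing by $r-\alpha$, this produces exactly the terms $\frac{2A_{1}h}{\pi}\log|t| + 2A_2h\log\log|t| + 2A_3h$, where the factor $2$ and the $\frac{1}{2}$ in the definitions of $A_1$, $A_2$, $A_3$ cancel. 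It also produces the first $\Ostar$ term.

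\emph{(ii) Right vertical integral.} Lemma~\ref{lemma:littlewood-outerlog} at $\sigma=1+\eta r$ gives $\Phi(1+\eta r)/\pi$.

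\emph{(iii) Boundary values of the horizontal arg integrals.} By Lemma~\ref{lem:littlewood-argsetup} with $\sigma_1-\sigma_0=(1+\eta)r$, these contribute $\frac{(1+\eta)r}{\pi}\log\zeta(1+\eta r)$. This accounts for the remaining $\Phi$ and $\log\zeta$ pieces of $A_6$.

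\emph{(iv) Integrated $\Delta\arg$ terms.} I apply Theorem~\ref{thm:arg-integrals} at heights $T=t\pm h$ with centre $c=1$ and radius $c-\sigma_0=r$. The hypothesis $\sigma_1-c=\eta r\le\eta(c-\sigma_0)$ holds with equality, which is the reason for choosing $\sigma_1=1+\eta r$. Each height is bounded by
\[ \frac{r}{2}\Bigl(-\log|\zeta(1+iT)|+\frac1\pi\int_0^\pi F_{1,r}(\theta)\,d\theta\Bigr). \]
Here I take for $F_{1,r}$ the piecewise majorant from Section~\ref{sec:inside} on $[\pi/2,\pi]$, together with $\log\zeta(1+r\cos\theta)$ on $[0,\pi/2]$. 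Then Propositions~\ref{prop:jensen-easy} and \ref{prop:integraloutside} bound $\frac1\pi\int_0^\pi F$ by
\[ \frac{c_{1,r}}{\pi}\log|t|+\frac{c_{2,r}}{\pi}\log\log|t|+\frac{c_{3,r}}{\pi}+c_{4,r}. \]
For the central value I use $-\log|\zeta(1+iT)|<\log\log T+\log 29.388$ from \cite{Leong2024}. Summing over the two heights cancels the $\frac12$, and dividing by $r-\alpha$ gives the $h$-independent terms:
\[ \frac{2A_4}{\pi}\log|t| \text{ with } A_4=\frac{c_{1,r}r}{r-\alpha}, \quad 2A_5\log\log|t| \text{ with } A_5=\frac{c_{2,r}r+\pi r}{\pi(r-\alpha)}, \]
and the remaining $c_{3,r}$, $c_{4,r}$ and $\log(29.388)$ parts of $A_6$. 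The evenness and symmetry hypotheses of Theorem~\ref{thm:arg-integrals} hold because $\overline{\zeta(s)}=\zeta(\bar s)$, so I take the maximum of the bounds at $\pm T$. The "no zeros on the segment" hypothesis is removed by the same continuity and halving argument.

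The main obstacle is the bookkeeping of the $\Ostar$ terms. The bounds in (iv) are evaluated at $T=t\pm h$, not at $t$, and the main terms must be re-expressed in $\log|t|$ and $\log\log|t|$. I would do this as in Proposition~\ref{prop:jensen-easy}: expand $\log(t\pm h\pm r)$ and $\log\log(t\pm h\pm r)$ in $r/t$, and likewise the $\log\log T$ from the Leong bound. This should produce the listed terms with $(t\pm h-r)$ and $(t\pm h-r)\log(t\pm h)-r$ in the denominators, multiplied by the prefactor $r/4$. One point needs care: shifting the $\log|T|$ coefficient from $t\pm h$ to $t$ costs order $h/t$, but these shifts have opposite signs at the two heights and cancel to first order. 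I would rely on that cancellation and bound the remainder by the stated terms.

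For the specialization to $t>10^{12}$ and $h<t^{2/3}$, I would substitute $h\le t^{2/3}$ and $r<1$ into each error term and bound the expressions monotonically. For example, $\frac{h}{(t-h)^2}\le \frac{t^{2/3}}{t^2(1-t^{-1/3})^2}$, which gives the $t^{-1/3}$ corrections; the numerical constants then follow by evaluating at $t=10^{12}$.
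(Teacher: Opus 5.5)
Your proposal is correct and follows the paper's proof essentially step for step. It makes the same choices $\sigma_0=1-r$, $\sigma_1=1+\eta r$ and $c=1$ in Theorem~\ref{thm:arg-integrals}, with Lemmas~\ref{lemma:littlewood-mainterm}, \ref{lemma:littlewood-outerlog}, \ref{lem:littlewood-argsetup} and Propositions~\ref{prop:jensen-easy}, \ref{prop:integraloutside} supplying the pieces exactly as you assign them. Two small remarks: the $t\pm h$ shift costs nothing, since concavity gives $\log(t+h)+\log(t-h)\le 2\log t$ (and likewise for $\log\log$); and summing the two heights actually yields only $\frac{A_{4,\alpha,r}}{\pi}\log|t|+A_{5,\alpha,r}\log\log|t|$, not twice that as you wrote, so the stated bound holds with room to spare.
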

\begin{proof}
This follows from Equation \eqref{eq:simplelittlewoodzerodensity} with $\sigma_0=1-r$ and $\sigma_1=1+\eta r$.
We use Lemma~\ref{lemma:littlewood-mainterm} to bound the first integral which gives the $A_{1,\alpha,r}$, $A_{2,\alpha,r}$, and $A_{3,\alpha,r}$ terms.
We use Lemma~\ref{lemma:littlewood-outerlog} to bound the integral outside the critical strip, this contributes $\Phi(1+\eta r)/\pi$ to $A_{6,\alpha,r} $.
We use Lemma~\ref{lem:littlewood-argsetup} to convert the $\arg\zeta$ integrals into $\Delta\arg\zeta$ integrals.
Using 
\[ \arg\zeta(1+\eta r + i(t \pm h)) \leq |\log(\zeta(1+\eta r + i(t \pm h)))| < \log(\zeta(1+\eta r)) \]
 contributes $(1+\eta)r\log(\zeta(1+\eta r))/\pi$ to $A_{6,\alpha,r}$.

Now for each of the two $\Delta\arg\zeta$ integrals we take $c=1$ and define $F_{c,r}(\theta)$ to be  $ \log(\zeta(r\cos(\theta))) $ when $ 0 \leq \theta \leq \pi/2$ and defined as in the proof of Proposition~\ref{prop:jensen-easy}.
The combined contribution from these two integrals is then precisely $1-\sigma_0=r$ times the bound obtained in Theorem~\ref{thm:circularregions1}. This explains the additional terms in $A_{4,\alpha,r}$, $A_{5,\alpha,r}$, and $A_{6,\alpha,r}$. 

Finally, the $\Ostar$ term tracks the errors that accumulate from the inputs above that carried their own $\Ostar$ error.
\end{proof}

\begin{remark}
    In contrast to optimizing $r$ for $B_{1,\alpha,r}$ the strategy for $A_{1,\alpha,r}$ is relatively straightforward as it is essentially defined by a piecewise fractional linear map:
    \[   \frac{-m_k r + m_k+b_k}{r-\alpha} \]
    so that on any interval its derivative  
    \[   \frac{  m_k\alpha + m_k+b_k}{(r-\alpha)^2}\]
    doesn't change sign. As such, the optimal is at an endpoint.
\end{remark}

\begin{remark}\label{rem:useintegralbound}
    It is worth noting that in some applications the bound in Equation \eqref{eq:zerodensityintegral} can be used directly rather than needing to translate to a bound through Theorem~\ref{thm:littlewoodshort}. This is because in applications we are often interested in bounding
    \[ \int_{\sigma_1}^{\sigma_2} f(\sigma)N(\sigma,T_1,T_2)d\sigma \]
    and integration by parts allows, for at least some of the terms, to be bounded by a direct use of equation \eqref{eq:zerodensityintegral}. This would typically result in a better final bound than using the bound from Theorem~\ref{thm:littlewoodshort} immediately.
\end{remark}

\begin{table}
\caption{Values for Theorem~\ref{thm:littlewoodshort}.
All values, including $r$, are rounded up.}\label{tab:littlewoodshort}

\setlength{\tabcolsep}{2pt}\footnotesize
\resizebox{\textwidth}{!}{%
\begin{tabular}{|cc|rrrrrr|r|}
\hline
$\alpha$&$r$&$A_{1,\alpha,r}$&$A_{2,\alpha,r}$&$A_{3,\alpha,r}$&$A_{4,\alpha,r}$&$A_{5,\alpha,r}$&$A_{6,\alpha,r}$&$r/\alpha$ \\
\hline
$ 1/6 $&$0.5000000$&$0.2469513$&$0$&$2.005451$&$0.2201801$&$1.948504$&$10.269668$&$3.000000$\\
$ 1/7 $&$0.5000000$&$0.2304879$&$0$&$1.871754$&$0.2055015$&$1.818604$&$9.585023$&$3.500000$\\
$ 1/8 $&$0.5000000$&$0.2195122$&$0$&$1.782623$&$0.1957157$&$1.732003$&$9.128594$&$4.000000$\\
$ 1/9 $&$0.2857143$&$0.2045455$&$0.9115238$&$0.3971245$&$0.1059233$&$2.454546$&$12.040681$&$2.571429$\\
$ 1/10 $&$0.2857143$&$0.1923077$&$0.8569882$&$0.3733649$&$0.09958597$&$2.307693$&$11.320299$&$2.857143$\\
$ 1/11 $&$0.2857143$&$0.1833334$&$0.8169954$&$0.3559412$&$0.09493862$&$2.200000$&$10.792018$&$3.142858$\\
$ 1/12 $&$0.2857143$&$0.1764706$&$0.7864127$&$0.3426172$&$0.09138477$&$2.117648$&$10.388039$&$3.428572$\\
$ 1/13 $&$0.2857143$&$0.1710527$&$0.7622685$&$0.3320983$&$0.08857910$&$2.052632$&$10.069108$&$3.714286$\\
$ 1/14 $&$0.2857143$&$0.1666667$&$0.7427231$&$0.3235829$&$0.08630784$&$2.000000$&$9.810926$&$4.000000$\\
$ 1/15 $&$0.2857143$&$0.1630435$&$0.7265770$&$0.3165485$&$0.08443158$&$1.956522$&$9.597645$&$4.285715$\\
$ 1/16 $&$0.2857143$&$0.1600000$&$0.7130142$&$0.3106396$&$0.08285553$&$1.920001$&$9.418489$&$4.571429$\\
$ 1/32 $&$0.1666667$&$0.1230770$&$1.175299$&$0.5120433$&$0.03779972$&$1.846154$&$11.325849$&$5.333334$\\
$ 1/64 $&$0.09677420$&$0.09937889$&$1.961264$&$0.8544655$&$0.01794826$&$1.788820$&$14.441878$&$6.193549$\\
$ 1/128 $&$0.05555556$&$0.08311689$&$3.333573$&$1.452341$&$0.008701209$&$1.745455$&$19.713406$&$7.111112$\\
$ 1/256 $&$0.03149607$&$0.07134895$&$5.768613$&$2.513217$&$0.004266277$&$1.712375$&$28.844815$&$8.062993$\\
$ 1/512 $&$0.01764706$&$0.06246950$&$10.141176$&$4.418216$&$0.002105235$&$1.686677$&$44.972432$&$9.035295$\\
$ 1/1024 $&$0.009784736$&$0.05554351$&$18.069007$&$7.872142$&$0.001042787$&$1.666306$&$73.905261$&$10.019570$\\
$ 1/2048 $&$0.005376345$&$0.04999512$&$32.559922$&$14.185413$&$5.177350\cdot 10^{-4}$&$1.649839$&$126.451532$&$11.010753$\\
\hline
\end{tabular}}
\end{table}

\section{Tables of $C_2$ values}\label{sec:C2tables}

For each $\alpha$ in Table~\ref{tab:CRvalues}'s list with $\alpha<1/6$, and each pair
$(h_0,t_0)$, the value of $r\in(\alpha,1)$ minimizing $C_2$ was located numerically and the
resulting $C_2$ recorded. Since $C_2$ is decreasing in both $h_0$ and $t_0$, each row bounds
the zero-density ratio for \emph{all} $t>t_0$ and $t^{2/3}>h>h_0$. Rows attaining $C_2<1/2$ guarantee that a positive proportion, $(1-2C_2)$, of the zeros in the interval
have real part in $[\alpha,1-\alpha]$; these are the values recorded in Table~\ref{table:chat}.

\begin{table}[p]
\centering\footnotesize
\setlength{\tabcolsep}{3pt}
\caption{Values of $C_2^J(\alpha,r,h_0,t_0)$ for Corollary \ref{cor:main-jensen}, with $r$ chosen to minimise $C_2^J$, at selected $t_0$; only entries with $C_2^J<1/2$ are shown. For each $\alpha$ the rows are for $h_0=100$ and, when a smaller value succeeds, for the smallest $h_0\in\{1,2,5,10\}$ with $C_2^J<1/2$ at some tabulated $t_0$.}\label{tab:C2J}
\begin{tabular}{|cc|cc|c||cc|cc|c|}
\hline
$\alpha$&$r$&$h_0$&$t_0$&$C_2^J$&$\alpha$&$r$&$h_0$&$t_0$&$C_2^J$\\
\hline
$1/7$&$0.5636350$&$100$&$10^{10000}$&$0.4986622$&$1/32$&$0.2013314$&$2$&$10^{1000}$&$0.4865377$\\
$1/8$&$0.5342314$&$100$&$10^{1000}$&$0.4775715$&$1/32$&$0.1675280$&$2$&$10^{10000}$&$0.4082385$\\
$1/8$&$0.5189671$&$100$&$10^{10000}$&$0.4657350$&$1/32$&$0.5517097$&$100$&$10^{50}$&$0.4818041$\\
$1/9$&$0.5000332$&$10$&$10^{10000}$&$0.4872129$&$1/32$&$0.5027604$&$100$&$10^{100}$&$0.4124018$\\
$1/9$&$0.5121283$&$100$&$10^{1000}$&$0.4528088$&$1/32$&$0.2266519$&$100$&$10^{1000}$&$0.2964409$\\
$1/9$&$0.5034450$&$100$&$10^{10000}$&$0.4408710$&$1/32$&$0.1731060$&$100$&$10^{10000}$&$0.2624836$\\
$1/10$&$0.5001816$&$10$&$10^{1000}$&$0.4808442$&$1/64$&$0.1689264$&$2$&$10^{1000}$&$0.4175095$\\
$1/10$&$0.4196975$&$10$&$10^{10000}$&$0.4644748$&$1/64$&$0.1011201$&$2$&$10^{10000}$&$0.3266227$\\
$1/10$&$0.5030004$&$100$&$10^{1000}$&$0.4337894$&$1/64$&$0.5286353$&$100$&$10^{50}$&$0.4594898$\\
$1/10$&$0.4952519$&$100$&$10^{10000}$&$0.4220812$&$1/64$&$0.4999542$&$100$&$10^{100}$&$0.3918022$\\
$1/11$&$0.3444898$&$5$&$10^{10000}$&$0.4893983$&$1/64$&$0.1712617$&$100$&$10^{1000}$&$0.2563107$\\
$1/11$&$0.5002041$&$100$&$10^{1000}$&$0.4189652$&$1/64$&$0.1053448$&$100$&$10^{10000}$&$0.2153473$\\
$1/11$&$0.3919117$&$100$&$10^{10000}$&$0.4049475$&$1/128$&$0.1492711$&$2$&$10^{1000}$&$0.3854116$\\
$1/12$&$0.3663995$&$5$&$10^{1000}$&$0.4975744$&$1/128$&$0.0678735$&$2$&$10^{10000}$&$0.2793925$\\
$1/12$&$0.3199260$&$5$&$10^{10000}$&$0.4666552$&$1/128$&$0.5203543$&$100$&$10^{50}$&$0.4486379$\\
$1/12$&$0.5613468$&$100$&$10^{100}$&$0.4907737$&$1/128$&$0.4238279$&$100$&$10^{100}$&$0.3812776$\\
$1/12$&$0.4330556$&$100$&$10^{1000}$&$0.4064223$&$1/128$&$0.1630512$&$100$&$10^{1000}$&$0.2370474$\\
$1/12$&$0.3456430$&$100$&$10^{10000}$&$0.3882898$&$1/128$&$0.0712326$&$100$&$10^{10000}$&$0.1859247$\\
$1/13$&$0.3408036$&$5$&$10^{1000}$&$0.4799246$&$1/256$&$0.0558222$&$1$&$10^{10000}$&$0.4669492$\\
$1/13$&$0.3050524$&$5$&$10^{10000}$&$0.4472619$&$1/256$&$0.5166588$&$100$&$10^{50}$&$0.4433041$\\
$1/13$&$0.5483231$&$100$&$10^{100}$&$0.4805636$&$1/256$&$0.3934370$&$100$&$10^{100}$&$0.3755101$\\
$1/13$&$0.3792615$&$100$&$10^{1000}$&$0.3941154$&$1/256$&$0.1248878$&$100$&$10^{1000}$&$0.2265346$\\
$1/13$&$0.3211564$&$100$&$10^{10000}$&$0.3733128$&$1/256$&$0.0561011$&$100$&$10^{10000}$&$0.1664993$\\
$1/14$&$0.3235592$&$5$&$10^{1000}$&$0.4644272$&$1/512$&$0.0442182$&$1$&$10^{10000}$&$0.4384521$\\
$1/14$&$0.2959856$&$5$&$10^{10000}$&$0.4308666$&$1/512$&$0.5151511$&$100$&$10^{50}$&$0.4406617$\\
$1/14$&$0.5389238$&$100$&$10^{100}$&$0.4718842$&$1/512$&$0.3805800$&$100$&$10^{100}$&$0.3725238$\\
$1/14$&$0.3482491$&$100$&$10^{1000}$&$0.3826411$&$1/512$&$0.1141511$&$100$&$10^{1000}$&$0.2206397$\\
$1/14$&$0.3066785$&$100$&$10^{10000}$&$0.3602738$&$1/512$&$0.0399347$&$100$&$10^{10000}$&$0.1560074$\\
$1/15$&$0.3113914$&$5$&$10^{1000}$&$0.4509017$&$1/1024$&$0.0378093$&$1$&$10^{10000}$&$0.4217035$\\
$1/15$&$0.2907171$&$5$&$10^{10000}$&$0.4170120$&$1/1024$&$0.5143367$&$100$&$10^{50}$&$0.4393469$\\
$1/15$&$0.5318671$&$100$&$10^{100}$&$0.4644368$&$1/1024$&$0.3746734$&$100$&$10^{100}$&$0.3710083$\\
$1/15$&$0.3287816$&$100$&$10^{1000}$&$0.3722727$&$1/1024$&$0.1101808$&$100$&$10^{1000}$&$0.2176024$\\
$1/15$&$0.2980066$&$100$&$10^{10000}$&$0.3490520$&$1/1024$&$0.0339743$&$100$&$10^{10000}$&$0.1493438$\\
$1/16$&$0.3029161$&$5$&$10^{1000}$&$0.4391156$&$1/2048$&$0.0354201$&$1$&$10^{10000}$&$0.4127839$\\
$1/16$&$0.2876722$&$5$&$10^{10000}$&$0.4052587$&$1/2048$&$0.5139859$&$100$&$10^{50}$&$0.4386911$\\
$1/16$&$0.5264389$&$100$&$10^{100}$&$0.4579894$&$1/2048$&$0.3717163$&$100$&$10^{100}$&$0.3702453$\\
$1/16$&$0.3156734$&$100$&$10^{1000}$&$0.3630387$&$1/2048$&$0.1082799$&$100$&$10^{1000}$&$0.2160693$\\
$1/16$&$0.2926453$&$100$&$10^{10000}$&$0.3394116$&$1/2048$&$0.0324890$&$100$&$10^{10000}$&$0.1458626$\\
\hline
\end{tabular}
\end{table}

\begin{table}[p]
\centering\footnotesize
\setlength{\tabcolsep}{3pt}
\caption{Values of $C_2^L(\alpha,r,h_0,t_0)$ for Corollary \ref{cor:main-littlewood}, with $r$ chosen to minimise $C_2^L$, at selected $t_0$; only entries with $C_2^L<1/2$ are shown. For each $\alpha$ the rows are for $h_0=100$ and, when a smaller value succeeds, for the smallest $h_0\in\{1,2,5,10\}$ with $C_2^L<1/2$ at some tabulated $t_0$.}\label{tab:C2L}
\begin{tabular}{|cc|cc|c||cc|cc|c|}
\hline
$\alpha$&$r$&$h_0$&$t_0$&$C_2^L$&$\alpha$&$r$&$h_0$&$t_0$&$C_2^L$\\
\hline
$1/7$&$0.5000553$&$100$&$10^{1000}$&$0.4740568$&$1/16$&$0.2856104$&$100$&$10^{10000}$&$0.3257779$\\
$1/7$&$0.5000553$&$100$&$10^{10000}$&$0.4685756$&$1/32$&$0.1665465$&$2$&$10^{1000}$&$0.4906269$\\
$1/8$&$0.4998543$&$100$&$10^{1000}$&$0.4514720$&$1/32$&$0.0968092$&$2$&$10^{10000}$&$0.4025658$\\
$1/8$&$0.4998543$&$100$&$10^{10000}$&$0.4462476$&$1/32$&$0.5000232$&$100$&$10^{50}$&$0.4488857$\\
$1/9$&$0.2857554$&$10$&$10^{1000}$&$0.4882517$&$1/32$&$0.2856721$&$100$&$10^{100}$&$0.3921085$\\
$1/9$&$0.2857554$&$10$&$10^{10000}$&$0.4619823$&$1/32$&$0.1665465$&$100$&$10^{1000}$&$0.2756404$\\
$1/9$&$0.5000332$&$100$&$10^{100}$&$0.4852020$&$1/32$&$0.1665465$&$100$&$10^{10000}$&$0.2518982$\\
$1/9$&$0.3669506$&$100$&$10^{1000}$&$0.4349491$&$1/64$&$0.0968954$&$2$&$10^{1000}$&$0.4205741$\\
$1/9$&$0.2857554$&$100$&$10^{10000}$&$0.4164824$&$1/64$&$0.0698280$&$2$&$10^{10000}$&$0.3224734$\\
$1/10$&$0.2856093$&$5$&$10^{10000}$&$0.4881779$&$1/64$&$0.4999542$&$100$&$10^{30}$&$0.4952642$\\
$1/10$&$0.5000258$&$100$&$10^{100}$&$0.4717227$&$1/64$&$0.4999542$&$100$&$10^{50}$&$0.4343990$\\
$1/10$&$0.2858613$&$100$&$10^{1000}$&$0.4092675$&$1/64$&$0.2856453$&$100$&$10^{100}$&$0.3694248$\\
$1/10$&$0.2856093$&$100$&$10^{10000}$&$0.3915972$&$1/64$&$0.0968954$&$100$&$10^{1000}$&$0.2451846$\\
$1/11$&$0.2857759$&$5$&$10^{1000}$&$0.4975185$&$1/64$&$0.0966197$&$100$&$10^{10000}$&$0.2060796$\\
$1/11$&$0.2856729$&$5$&$10^{10000}$&$0.4653739$&$1/128$&$0.0967824$&$2$&$10^{1000}$&$0.3836244$\\
$1/11$&$0.4999496$&$100$&$10^{100}$&$0.4612366$&$1/128$&$0.0555481$&$2$&$10^{10000}$&$0.2709904$\\
$1/11$&$0.2857759$&$100$&$10^{1000}$&$0.3901638$&$1/128$&$0.4999620$&$100$&$10^{30}$&$0.4874013$\\
$1/11$&$0.2856186$&$100$&$10^{10000}$&$0.3733105$&$1/128$&$0.4999620$&$100$&$10^{50}$&$0.4275029$\\
$1/12$&$0.2857673$&$5$&$10^{1000}$&$0.4788967$&$1/128$&$0.2856832$&$100$&$10^{100}$&$0.3590329$\\
$1/12$&$0.2857673$&$5$&$10^{10000}$&$0.4479777$&$1/128$&$0.0967824$&$100$&$10^{1000}$&$0.2236500$\\
$1/12$&$0.4999684$&$100$&$10^{100}$&$0.4528478$&$1/128$&$0.0555481$&$100$&$10^{10000}$&$0.1771132$\\
$1/12$&$0.2857673$&$100$&$10^{1000}$&$0.3755607$&$1/256$&$0.0315258$&$1$&$10^{10000}$&$0.4532725$\\
$1/12$&$0.2857673$&$100$&$10^{10000}$&$0.3593345$&$1/256$&$0.4999032$&$100$&$10^{30}$&$0.4835693$\\
$1/13$&$0.2856637$&$5$&$10^{1000}$&$0.4641766$&$1/256$&$0.4999032$&$100$&$10^{50}$&$0.4241374$\\
$1/13$&$0.2844701$&$5$&$10^{10000}$&$0.4341935$&$1/256$&$0.2857891$&$100$&$10^{100}$&$0.3540689$\\
$1/13$&$0.5000111$&$100$&$10^{50}$&$0.4973387$&$1/256$&$0.0968742$&$100$&$10^{1000}$&$0.2142779$\\
$1/13$&$0.5000111$&$100$&$10^{100}$&$0.4459882$&$1/256$&$0.0315258$&$100$&$10^{10000}$&$0.1603663$\\
$1/13$&$0.2857189$&$100$&$10^{1000}$&$0.3640172$&$1/512$&$0.0313678$&$1$&$10^{10000}$&$0.4232869$\\
$1/13$&$0.2856637$&$100$&$10^{10000}$&$0.3482876$&$1/512$&$0.4999329$&$100$&$10^{30}$&$0.4816696$\\
$1/14$&$0.2855528$&$5$&$10^{1000}$&$0.4522763$&$1/512$&$0.4999329$&$100$&$10^{50}$&$0.4224735$\\
$1/14$&$0.2422993$&$5$&$10^{10000}$&$0.4217659$&$1/512$&$0.2855712$&$100$&$10^{100}$&$0.3516266$\\
$1/14$&$0.4999153$&$100$&$10^{50}$&$0.4909741$&$1/512$&$0.0967377$&$100$&$10^{1000}$&$0.2098309$\\
$1/14$&$0.4999153$&$100$&$10^{100}$&$0.4402708$&$1/512$&$0.0313678$&$100$&$10^{10000}$&$0.1497831$\\
$1/14$&$0.2855528$&$100$&$10^{1000}$&$0.3547038$&$1/1024$&$0.0222964$&$1$&$10^{10000}$&$0.4076979$\\
$1/14$&$0.2855528$&$100$&$10^{10000}$&$0.3393624$&$1/1024$&$0.4998957$&$100$&$10^{30}$&$0.4807305$\\
$1/15$&$0.2814622$&$5$&$10^{1000}$&$0.4424253$&$1/1024$&$0.4998957$&$100$&$10^{50}$&$0.4216470$\\
$1/15$&$0.1984167$&$5$&$10^{10000}$&$0.4086917$&$1/1024$&$0.2856039$&$100$&$10^{100}$&$0.3504178$\\
$1/15$&$0.4999850$&$100$&$10^{50}$&$0.4855666$&$1/1024$&$0.0967583$&$100$&$10^{1000}$&$0.2076897$\\
$1/15$&$0.4999850$&$100$&$10^{100}$&$0.4354286$&$1/1024$&$0.0313244$&$100$&$10^{10000}$&$0.1449718$\\
$1/15$&$0.2857290$&$100$&$10^{1000}$&$0.3469773$&$1/2048$&$0.0177013$&$1$&$10^{10000}$&$0.3970552$\\
$1/15$&$0.2857290$&$100$&$10^{10000}$&$0.3319817$&$1/2048$&$0.4999310$&$100$&$10^{30}$&$0.4802571$\\
$1/16$&$0.2506745$&$5$&$10^{1000}$&$0.4333652$&$1/2048$&$0.4999310$&$100$&$10^{50}$&$0.4212344$\\
$1/16$&$0.1668332$&$5$&$10^{10000}$&$0.3938998$&$1/2048$&$0.2857073$&$100$&$10^{100}$&$0.3498109$\\
$1/16$&$0.4999745$&$100$&$10^{50}$&$0.4809436$&$1/2048$&$0.0965963$&$100$&$10^{1000}$&$0.2066454$\\
$1/16$&$0.4998122$&$100$&$10^{100}$&$0.4312874$&$1/2048$&$0.0177013$&$100$&$10^{10000}$&$0.1418188$\\
$1/16$&$0.2856104$&$100$&$10^{1000}$&$0.3405016$&&&&&\\
\hline
\end{tabular}
\end{table}

\begin{table}[p]
\centering\footnotesize
\setlength{\tabcolsep}{3pt}
\caption{Smallest $n$ with $C_2^J(\alpha,r,h_0,10^n)<1/2$, with $r$ chosen to minimise $C_2^J$. The last column is the resulting proportion $1-2C_2^J$ of zeros with real part in $[\alpha,1-\alpha]$, rounded down; since $C_2^J$ is decreasing in $t_0$, it is a lower bound for every $t_0\geq 10^n$. For each $\alpha$ the rows are for $h_0=100$ and, when it exists, for the smallest $h_0\in\{1,2,5,10\}$ with such an $n\leq 10^5$.}\label{tab:C2Jfrontier}
\begin{tabular}{|cc|c|cc||cc|c|cc|}
\hline
$\alpha$&$r$&$h_0$&$n$&$1-2C_2^J$&$\alpha$&$r$&$h_0$&$n$&$1-2C_2^J$\\
\hline
$1/7$&$0.5664448$&$100$&$5094$&$6.42\cdot10^{-7}$&$1/16$&$0.3863327$&$5$&$305$&$2.70\cdot10^{-4}$\\
$1/8$&$0.5699492$&$100$&$326$&$1.34\cdot10^{-4}$&$1/16$&$0.5756069$&$100$&$64$&$1.31\cdot10^{-3}$\\
$1/9$&$0.5035988$&$10$&$1157$&$1.75\cdot10^{-5}$&$1/32$&$0.2117772$&$2$&$838$&$3.64\cdot10^{-5}$\\
$1/9$&$0.5722963$&$100$&$178$&$8.89\cdot10^{-5}$&$1/32$&$0.5750673$&$100$&$44$&$2.58\cdot10^{-3}$\\
$1/10$&$0.5056153$&$10$&$428$&$9.33\cdot10^{-6}$&$1/64$&$0.2075002$&$2$&$447$&$5.18\cdot10^{-4}$\\
$1/10$&$0.5731932$&$100$&$129$&$5.77\cdot10^{-4}$&$1/64$&$0.5731529$&$100$&$38$&$4.73\cdot10^{-3}$\\
$1/11$&$0.3636817$&$5$&$2547$&$4.01\cdot10^{-6}$&$1/128$&$0.0599781$&$1$&$17023$&$4.07\cdot10^{-6}$\\
$1/11$&$0.5745233$&$100$&$104$&$1.82\cdot10^{-4}$&$1/128$&$0.5759244$&$100$&$35$&$1.06\cdot10^{-3}$\\
$1/12$&$0.3712620$&$5$&$921$&$4.01\cdot10^{-5}$&$1/256$&$0.0600973$&$1$&$6152$&$1.13\cdot10^{-5}$\\
$1/12$&$0.5742356$&$100$&$90$&$1.39\cdot10^{-3}$&$1/256$&$0.5745321$&$100$&$34$&$2.55\cdot10^{-3}$\\
$1/13$&$0.3769730$&$5$&$580$&$9.78\cdot10^{-5}$&$1/512$&$0.0599954$&$1$&$4585$&$4.19\cdot10^{-6}$\\
$1/13$&$0.5750566$&$100$&$80$&$9.64\cdot10^{-4}$&$1/512$&$0.5700385$&$100$&$34$&$7.91\cdot10^{-3}$\\
$1/14$&$0.3810698$&$5$&$435$&$1.79\cdot10^{-4}$&$1/1024$&$0.0598071$&$1$&$4055$&$5.36\cdot10^{-5}$\\
$1/14$&$0.5755687$&$100$&$73$&$7.86\cdot10^{-4}$&$1/1024$&$0.5756496$&$100$&$33$&$8.67\cdot10^{-4}$\\
$1/15$&$0.3841120$&$5$&$355$&$1.47\cdot10^{-4}$&$1/2048$&$0.0598000$&$1$&$3830$&$3.50\cdot10^{-5}$\\
$1/15$&$0.5754424$&$100$&$68$&$1.27\cdot10^{-3}$&$1/2048$&$0.5744293$&$100$&$33$&$2.20\cdot10^{-3}$\\
\hline
\end{tabular}
\end{table}

\begin{table}[p]
\centering\footnotesize
\setlength{\tabcolsep}{3pt}
\caption{Smallest $n$ with $C_2^L(\alpha,r,h_0,10^n)<1/2$, with $r$ chosen to minimise $C_2^L$. The last column is the resulting proportion $1-2C_2^L$ of zeros with real part in $[\alpha,1-\alpha]$, rounded down; since $C_2^L$ is decreasing in $t_0$, it is a lower bound for every $t_0\geq 10^n$. For each $\alpha$ the rows are for $h_0=100$ and, when it exists, for the smallest $h_0\in\{1,2,5,10\}$ with such an $n\leq 10^5$.}\label{tab:C2Lfrontier}
\begin{tabular}{|cc|c|cc||cc|c|cc|}
\hline
$\alpha$&$r$&$h_0$&$n$&$1-2C_2^L$&$\alpha$&$r$&$h_0$&$n$&$1-2C_2^L$\\
\hline
$1/7$&$0.5000553$&$100$&$189$&$1.43\cdot10^{-4}$&$1/16$&$0.2856104$&$5$&$287$&$3.67\cdot10^{-4}$\\
$1/8$&$0.3037963$&$10$&$17628$&$1.03\cdot10^{-7}$&$1/16$&$0.4999745$&$100$&$43$&$5.46\cdot10^{-3}$\\
$1/8$&$0.5000993$&$100$&$106$&$1.34\cdot10^{-4}$&$1/32$&$0.1665465$&$2$&$889$&$1.58\cdot10^{-4}$\\
$1/9$&$0.3235928$&$10$&$675$&$5.83\cdot10^{-5}$&$1/32$&$0.5000232$&$100$&$33$&$5.28\cdot10^{-3}$\\
$1/9$&$0.5000332$&$100$&$79$&$3.90\cdot10^{-5}$&$1/64$&$0.1453621$&$2$&$515$&$6.21\cdot10^{-5}$\\
$1/10$&$0.2856093$&$5$&$2545$&$1.14\cdot10^{-5}$&$1/64$&$0.4999542$&$100$&$30$&$9.47\cdot10^{-3}$\\
$1/10$&$0.5000258$&$100$&$66$&$7.79\cdot10^{-4}$&$1/128$&$0.0438441$&$1$&$14496$&$3.68\cdot10^{-6}$\\
$1/11$&$0.2857759$&$5$&$930$&$3.06\cdot10^{-5}$&$1/128$&$0.4999620$&$100$&$28$&$3.52\cdot10^{-3}$\\
$1/11$&$0.4999496$&$100$&$58$&$7.48\cdot10^{-4}$&$1/256$&$0.0353495$&$1$&$5572$&$1.70\cdot10^{-6}$\\
$1/12$&$0.2857673$&$5$&$593$&$8.86\cdot10^{-5}$&$1/256$&$0.4999032$&$100$&$28$&$1.13\cdot10^{-2}$\\
$1/12$&$0.4999684$&$100$&$53$&$1.89\cdot10^{-3}$&$1/512$&$0.0315405$&$1$&$4167$&$5.02\cdot10^{-5}$\\
$1/13$&$0.2856637$&$5$&$450$&$2.70\cdot10^{-4}$&$1/512$&$0.4999329$&$100$&$27$&$3.30\cdot10^{-3}$\\
$1/13$&$0.5000111$&$100$&$49$&$1.10\cdot10^{-3}$&$1/1024$&$0.0316041$&$1$&$3688$&$4.50\cdot10^{-5}$\\
$1/14$&$0.2858128$&$5$&$371$&$2.69\cdot10^{-4}$&$1/1024$&$0.4998957$&$100$&$27$&$5.25\cdot10^{-3}$\\
$1/14$&$0.4999153$&$100$&$46$&$2.69\cdot10^{-4}$&$1/2048$&$0.0315832$&$1$&$3486$&$2.57\cdot10^{-5}$\\
$1/15$&$0.2857290$&$5$&$321$&$2.62\cdot10^{-4}$&$1/2048$&$0.4999310$&$100$&$27$&$6.23\cdot10^{-3}$\\
$1/15$&$0.4999850$&$100$&$44$&$1.27\cdot10^{-3}$&&&&&\\
\hline
\end{tabular}
\end{table}

\appendix
\section{Bounds For Zeta}\label{sec:background}

In this Appendix we recall or establish bounds on $\zeta$ which we have used

First, we recall the subconvexity bounds 
\[  |\zeta(1/2+it)| \leq 0.618 |t|^{1/6} \log|t|\]
from \cite{HiaryPatelYang2024} with recent claimed improvements to
\[   |\zeta(1/2+it)| \leq 0.611 |t|^{1/6} \log|t|\]
in \cite{Revers2026}.
We note however that both works provide better asymptotic results, specifically \cite[Equation~(3.23)]{HiaryPatelYang2024} proves
\[|\zeta(1/2+it)| < 0.478013|t|^{\frac{1}{6}} \log|t| + 3.853165 |t|^{\frac{1}{6}} - 2.914229\]
for $t>10^{12}$ and \cite[Remark~3.1]{Revers2026}
\[0.470795|t|^{\frac{1}{6}}\log|t|+4.04972|t|^{\frac{1}{6}}-21.5437\]
for $t>7\cdot 10^{11}$, from which one can in fact conclude that
\[|\zeta(1/2+it)|  <  0.470795 |t|^{\frac{1}{6}}\log|t|   + 4.04972 |t|^{\frac{1}{6}} \]
for $|t| > 3$, which is better than $0.611 |t|^{1/6} \log|t|$ for large $t$ and worse for small $t$.

For even larger values of $t$ we have the better bound
\[  |\zeta(1/2+it)| \leq 66.7t^{\frac{27}{164}}\]
due to \cite{PatelYang2024}.

Inside the critical strip by \cite{Yang2024} we have that for any $k\geq 4$
\[ |\zeta(1-k/(2^k-2)+it)|< 1.546  |t|^{1/(2^k-2)}\log|t| \qquad t\geq 3.\]
A better bound for large $t$ and $\sigma$ close to $1$, valid for $|t|>3$ and $1/2 \leq \sigma \leq 1$,  is
\[ |\zeta(\sigma+it)|  < 70.7|t|^{4.438(1-\sigma)^{3/2}}(\log|t|)^{2/3} \]
 from \cite{Bellotti2024}.

Using the version of the  Phragm\'en-Lindel\"of from \cite{Fiori2026}, and following the approach of \cite[Section~4]{Fiori2026} we can essentially interpolate bounds on $\log|\zeta(\sigma+it)|$ as follows:
\begin{proposition}\label{prop:interpolation}
For $t>12$
 we have the following bounds on $\log|\zeta(\sigma+it)|$ .
\begin{enumerate}
    \item For $1/2 \leq \sigma \leq 5/7$, we have the following bounds
    \begin{align*}
     \log|\zeta(\sigma+it)|
     &\leq \left(\frac{47}{123} - \frac{107}{246}\sigma\right)\log|t|+ \frac{14}{3}\left(\sigma-\frac{1}{2}\right)\log\log|t|\\&\quad  + \frac{14}{3}\left(\frac{5}{7}-\sigma\right)\log(66.7) + \frac{14}{3}\left(\sigma-\frac{1}{2}\right)\log(1.546)\\&\quad + \Ostar\!\left(\frac{3}{2t^2} + \frac{283}{4t^2\log|t|}\right)\\
     \log|\zeta(\sigma+it)| &\leq  \left(\frac{4(1-\sigma)}{9}-\frac{1}{18} \right)\log|t|
                 +\log\log|t| +   \frac{14}{3}\left(\frac{5}{7}-\sigma\right)\log(0.611) \\&\quad+  \frac{14}{3}\left(\sigma-\frac{1}{2}\right)\log(1.546)   \\&\quad+ \Ostar\!\left(\frac{3}{2t^2} + \frac{579}{4t^2\log|t|}\right) \\
     \log|\zeta(\sigma+it)| &\leq  \left(\frac{4(1-\sigma)}{9}-\frac{1}{18} \right)\log|t|
                 +\log\log|t| \\&\quad +  \frac{14}{3}\left(\frac{5}{7}-\sigma\right)\log\left(0.470795  + \frac{4.04972}{\log|t|}\right) +\frac{14}{3}\left(\sigma-\frac{1}{2}\right)\log(1.546)\\&\quad   +  \Ostar\!\left(\frac{3}{2t^2} + \frac{567}{4t^2\log|t|}\right)
     \end{align*}
    \item For  $k\geq 4$ and $1-\frac{k}{2^k-2} \leq \sigma \leq 1-\frac{k+1}{2^{k+1}-2}$ (this line only needs $t>3$)
        \begin{align*} \log|\zeta(\sigma+it)| &\leq  \left(\frac{(1-\sigma)}{k-1+2^{1-k}}-\frac{1}{2^k(k-1)+2} \right)\log|t|  +\log\log|t| + \log(1.546)\\&\quad +\Ostar\!\left(\frac{3}{2t^2} + \frac{7}{t^2\log|t|}\right)
       \end{align*}
\end{enumerate}
\end{proposition}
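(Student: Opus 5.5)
The plan is to obtain every line of the proposition as a convexity interpolation of $\log|\zeta(\sigma+it)|$ between two vertical lines on which a bound is already recorded in this appendix. The interpolation is done with the Phragm\'en--Lindel\"of principle in the form of \cite{Fiori2026}, following \cite[Section~4]{Fiori2026}. For part (1) the left line is $\sigma=1/2$ and the right line is $\sigma=5/7=1-4/(2^4-2)$, which is the $k=4$ line of \cite{Yang2024}. On the right line we always use $|\zeta(5/7+it)|<1.546|t|^{1/14}\log|t|$. On the left line we use, in turn:
\begin{itemize}
\item $66.7|t|^{27/164}$ from \cite{PatelYang2024};
\item $0.611|t|^{1/6}\log|t|$ from \cite{Revers2026};
\item $\bigl(0.470795+4.04972/\log|t|\bigr)|t|^{1/6}\log|t|$, derived above from \cite[Remark~3.1]{Revers2026}.
\end{itemize}
For part (2) the two lines are $1-\frac{k}{2^k-2}$ and $1-\frac{k+1}{2^{k+1}-2}$. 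On these \cite{Yang2024} gives the exponents $\frac{1}{2^k-2}$ and $\frac{1}{2^{k+1}-2}$, each with the same factor $1.546\log|t|$.

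The first step is to check that the claimed main terms are exactly the linear interpolants of the endpoint data in $\sigma$.
\begin{itemize}
\item In the first bound the $\log|t|$ coefficient runs from $27/164$ at $\sigma=1/2$ to $1/14$ at $\sigma=5/7$. This gives slope $-107/246$ and intercept $47/123$.
\item In that bound the $\log\log|t|$ coefficient runs from $0$ to $1$, which gives $\frac{14}{3}(\sigma-\frac12)$.
\item The constants interpolate as $\frac{14}{3}(\frac57-\sigma)\log(66.7)+\frac{14}{3}(\sigma-\frac12)\log(1.546)$.
\item In the other two bounds the exponents run from $1/6$ to $1/14$, which gives $\frac{4(1-\sigma)}{9}-\frac1{18}$. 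There the $\log\log$ coefficient is $1$ on both lines.
\item In part (2) the slope is $\bigl(\frac{1}{2^k-2}-\frac1{2^{k+1}-2}\bigr)\big/\bigl(\frac{k}{2^k-2}-\frac{k+1}{2^{k+1}-2}\bigr)$. This simplifies to $\frac{1}{k-1+2^{1-k}}$, and the intercept is as stated.
\end{itemize}
These are routine algebraic checks.

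The second step is the analytic interpolation itself. We apply the Phragm\'en--Lindel\"of theorem of \cite{Fiori2026} on the strip between the two lines to $\zeta(s)$ divided by a majorant built from $(s+ia)^{\ell(s)}$ and a power of $\log(s+ia)$. Here $\ell$ is the affine exponent, $a$ is a suitable shift, and the power of $\log(s+ia)$ is also affine in $\sigma$. This yields, at height $t$, the bound $\ell(\sigma)\log|\sigma+it+ia|+\lambda(\sigma)\log\log|\sigma+it+ia|+\text{const}$, where the constant is the linear interpolation of the endpoint constants. We need the majorant to be bounded below on the boundary lines for all $|t|$ large, and we need the growth hypothesis in the interior; the convexity bound for $\zeta$ supplies the latter. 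For small $|t|$ we appeal either to the explicit validity ranges ($t>3$ or $t>12$) or to a direct check, as done in \cite{Fiori2026}.

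The third step, which we expect to be the main obstacle, is producing the explicit $\Ostar$ tails $\frac{3}{2t^2}+\frac{c}{t^2\log|t|}$. These come from converting $\log|\sigma+i(t+a)|$ and $\log\log|\sigma+i(t+a)|$ back into $\log|t|$ and $\log\log|t|$. We would first write $\log|\sigma+it'|=\log|t'|+\frac12\log(1+\sigma^2/t'^2)$, which is at most $\log|t'|+\frac{\sigma^2}{2t'^2}$. We would then take the shift so that the first-order terms cancel, and expand $\log\log$ similarly: $\log(\log|t|+\epsilon)\le\log\log|t|+\epsilon/\log|t|$. The terms quadratic in $1/t$ are then bounded using $\ell\le 1/6$, $\sigma\le1$ and the size of the shift, giving numerical constants such as $579/4$, $283/4$, $567/4$ and $7$.

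The delicate point is that the log-factor in the majorant must be chosen carefully. It needs a branch that is analytic and nonvanishing on the strip, and its modulus must be compared with $\log|t|$ uniformly down to $t=12$ (respectively $t=3$). We would handle this by restricting to $t$ beyond a moderate threshold for the asymptotic comparison and checking the remaining finite range numerically.
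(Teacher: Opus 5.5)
Your overall route is the paper's: Phragm\'en--Lindel\"of interpolation as in \cite{Fiori2026}, with main terms that are linear interpolants of the endpoint data, and your algebra for these is correct. The gap is the majorant, which is exactly what the paper's proof is about, and yours cannot deliver the stated $\Ostar$ tails.

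If you divide by $\log(s+ia)^{\lambda(s)}$, the theorem bounds $|\zeta(\sigma+it)|$ by $|\log(\sigma+it+ia)|^{\lambda(\sigma)}$ (times the other factors). It does not bound it by $(\log|\sigma+it+ia|)^{\lambda(\sigma)}$, as your second and third steps assume. Since $\arg(s+ia)\to\pm\pi/2$, we have $|\log(s+ia)|^2=\log^2|s+ia|+\arg(s+ia)^2$, and hence
\[ \log|\log(\sigma+it+ia)|=\log\log|t|+\frac{\pi^2}{8\log^2|t|}+\cdots . \]
This discrepancy decays only like $(\log|t|)^{-2}$. Your expansion $\log(\log|t|+\epsilon)\le\log\log|t|+\epsilon/\log|t|$ presupposes $\epsilon=O(t^{-2})$, which is false for this factor. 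For all large $t$ your bound is therefore strictly weaker than the stated one, and no threshold-plus-numerics step repairs this. Separately, an imaginary shift $a\neq0$ gives $\log|\sigma+i(t+a)|=\log|t|+a/t+O(t^{-2})$, a first-order error; the shift should be real, as in the paper's $G_0(s)=1+s$.

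The paper's fix is to use symmetrized logarithms,
\[ G_1(s)=\tfrac{0.470795}{2}\bigl(\log(4e+1+s)+\log(4e+1-s)\bigr)+4.04972,\qquad G_2(s)=\tfrac{1.546}{2}\bigl(\log(4e+s)+\log(4e+2-s)\bigr). \]
Here the arguments $\approx\pm\pi/2$ of the two logarithms cancel up to an $O(1/t)$ remainder. That residual imaginary part enters the modulus only at order $t^{-2}$. So $|G_1|$ and $|G_2|$ agree with $0.470795\log|t|+4.04972$ and $1.546\log|t|$ up to $O(t^{-2})$, which is where the $\frac{c}{t^2\log|t|}$ tails come from.

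Two further ingredients are missing.
\begin{enumerate}
\item The third bound in (1) cannot come from a single power of a logarithm with linearly interpolated constants. Its left-line datum $\log(0.470795+4.04972/\log|t|)$ depends on $t$. You need two separate log-type factors: $G_1$ with exponent $\frac{14}{3}(\frac57-\sigma)$, and $G_2$ with exponent $\frac{14}{3}(\sigma-\frac12)$.
\item \cite[Theorem~7]{Fiori2026} carries conditions linking the direction in which each exponent changes to the monotonicity of $|G_j|$ in $\sigma$. The paper checks that $|G_0|$ and $|G_1|$ increase in $\sigma$ while their exponents decrease, and that $|G_2|$ decreases while its exponent increases. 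Centring $G_2$ at $\sigma=1$ rather than $\sigma=0$ is what makes $|G_2|$ decrease on the strip. The paper then handles the compact range $|t|\le 4e+1$ by the maximum modulus principle. Your plan does not address these conditions.
\end{enumerate}
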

\begin{proof}
    Applying the Phragm\'en-Lindel\"of Theorem as in \cite[Section~4]{Fiori2026} to bound $|\zeta|$ and taking the logarithm will allow one to obtain the result.

    The hardest case which is new is perhaps the third bound in (1) so we will illustrate how the proof goes for this case.
    We define
    \[ G_0(s) = (1+s), \]
    \[ G_1(s) = \frac{1}{2}(\log(4e + 1+s) + \log(4e +1 - s))0.470795  +  4.04972, \]
    and
    \[ G_2(s) = \frac{1.546}{2}(\log(4e + s) + \log(4e +2 - s)). \]
    The bounds we wish to interpolate are:
    \[ |\zeta(1/2 + it)| < |G_0(1/2+it)|^{1/6}|G_1(1/2+it)|^{1}|G_2(1/2+it)|^{0}\]
    and
    \[ |\zeta(5/7 + it)| < |G_0(5/7+it)|^{1/14}|G_1(5/7+it)|^{0}|G_2(5/7+it)|^{1}.\]
Note that the functions $G_0$ and $G_1$ are increasing in $\sigma$ for $|t|$ large whereas $G_2$ is decreasing, they thus satisfy the conditions coming from \cite[Theorem~7]{Fiori2026} related to how the exponents are changing.
One can also verify that the necessary condition is satisfied in the compact region $|t|\leq 4e+1\approx 11.87$ 
via the Maximum Modulus Principle.
The final $\Ostar$ term is established by looking at how quickly $G_0$, $G_1$, and $G_2$ converge to $|t|$, $0.470795\log|t|+4.04972$, and $1.546\log|t|$.
\end{proof}

To get bounds for $\zeta(\sigma + it)$ for $-1 \leq \sigma \leq 1/2$ one can use the following.

\begin{lemma}\label{lem:zetalessthanhalf}
For $0 \leq \sigma \leq 1/2$ and $t>10$ we have
\begin{align*} \log|\zeta(\sigma+it)| = \log|\zeta(1-\sigma+it)| &+ (1/2-\sigma)\log|t/2|  +(\sigma-1/2)\log|\pi|
\\&+ \Ostar\!\left(\left(\frac{\sqrt3}{216}+\frac{0.17}{t}\right)\frac{1}{t^2}\right). \end{align*}
For $-1\leq \sigma \leq 0$ and $t>10$ we have:
\begin{align*} \log|\zeta(\sigma+it)| = \log|\zeta(1-\sigma+it)| &+ (1/2-\sigma)\log|t/2|  +(\sigma-1/2)\log|\pi|
\\&+  \Ostar\!\left(\left(\frac{1}{2}+\frac{1.4}{t}\right)\frac{1}{t^2}\right). \end{align*}
\end{lemma}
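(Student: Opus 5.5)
The plan is to use the functional equation $\zeta(s)=\chi(s)\zeta(1-s)$, with $\chi(s)=2^s\pi^{s-1}\sin(\pi s/2)\Gamma(1-s)$. Taking logarithms of absolute values gives
\[ \log|\zeta(\sigma+it)|-\log|\zeta(1-\sigma+it)| = \log|\chi(\sigma+it)|, \]
so everything reduces to an explicit estimate for $\log|\chi(\sigma+it)|$ that is uniform for $-1\le\sigma\le 1/2$ and $t>10$. Write $\log|\chi(\sigma+it)| = \sigma\log 2+(\sigma-1)\log\pi+\log|\sin(\pi(\sigma+it)/2)|+\log|\Gamma(1-\sigma-it)|$ and treat the two transcendental pieces separately.

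For the sine factor we have $|\sin(\pi s/2)|^2=\sinh^2(\pi t/2)+\sin^2(\pi\sigma/2)$. Hence
\[ \log|\sin(\pi s/2)| = \frac{\pi t}{2}-\log 2+O(e^{-\pi t}), \]
and for $t>10$ the error $e^{-\pi t}$ is far smaller than any $t^{-3}$ term, so it can be absorbed into the stated $\Ostar$.

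For the Gamma factor we apply Stirling's formula to $z=1-\sigma-it$, whose real part lies in $[1/2,2]$, with the explicit remainder $\log\Gamma(z)=(z-\tfrac12)\log z-z+\tfrac12\log 2\pi+\frac{1}{12z}+R(z)$ and $|R(z)|\le \frac{c}{|z|^3}$ for $\Re z>0$ (Binet/Whittaker–Watson form). Taking real parts requires three expansions. First, $\Re((z-\tfrac12)\log z)=(\tfrac12-\sigma)\log|z|+t\arg(z)$. Second, $\log|z|=\log t+\frac{(1-\sigma)^2}{2t^2}+O(t^{-4})$. Third, $\arg z=-\pi/2+\arctan\!\big(\frac{1-\sigma}{t}\big)$, so that $t\arg z=-\frac{\pi t}{2}+(1-\sigma)-\frac{(1-\sigma)^3}{3t^2}+\dots$. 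Together with $-\Re z=\sigma-1$, the linear terms $(1-\sigma)$ cancel. The $\pi t/2$ terms cancel against the sine factor. The constants combine to
\[ \sigma\log 2+(\sigma-1)\log\pi-\log 2+\tfrac12\log 2\pi=(\tfrac12-\sigma)\log(1/2)\cdot(-1)\ldots, \]
which should equal $-(\tfrac12-\sigma)\log 2+(\sigma-\tfrac12)\log\pi$. Combined with $(\tfrac12-\sigma)\log t$, this yields exactly the main term $(\tfrac12-\sigma)\log|t/2|+(\sigma-\tfrac12)\log\pi$.

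The remaining work, and the expected main obstacle, is collecting the $t^{-2}$ terms with sharp constants. These come from four sources: $(\tfrac12-\sigma)\frac{(1-\sigma)^2}{2t^2}$, $-\frac{(1-\sigma)^3}{3t^2}$, $\Re\frac{1}{12z}=\frac{1-\sigma}{12|z|^2}$, and the Stirling remainder, which is $O(t^{-3})$ and is what produces the $0.17/t$ and $1.4/t$ parts. Writing $u=1-\sigma$, the total $t^{-2}$ coefficient is $g(u)=\frac{u^2(u-\frac12)}{2}-\frac{u^3}{3}+\frac{u}{12}=\frac{u^3}{6}-\frac{u^2}{4}+\frac{u}{12}$.

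The two ranges are then handled separately:
\begin{itemize}
\item On $u\in[1/2,1]$, i.e.\ $0\le\sigma\le1/2$, we maximise $|g|$. Since $g'(u)=\frac{u^2}{2}-\frac u2+\frac1{12}$ vanishes at $u=\frac12\pm\frac{\sqrt3}{6}$, the extremum $|g|=\frac{\sqrt3}{216}$ occurs at $u=\frac12+\frac{\sqrt3}{6}$, which is exactly the constant claimed.
\item On $u\in[1,2]$, i.e.\ $-1\le\sigma\le0$, $|g|\le g(2)=\frac12$, again matching the statement.
\end{itemize}

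Finally, the $t^{-3}$ and higher contributions are bounded crudely using $t>10$: the Taylor tails of $\log|z|$ and of $\arctan$, the difference between $\frac{1}{12|z|^2}$ and $\frac{1}{12t^2}$, and $|R(z)|$. This gives the $0.17/t$ and $1.4/t$ coefficients, checked numerically at the endpoint $t=10$ using monotonicity in $t$.
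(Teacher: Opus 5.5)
Your proof is correct, but it is organized differently from the paper's. The paper uses the symmetric functional equation, $\log|\zeta(\sigma+it)|=\log|\zeta(1-\sigma+it)|+(\sigma-\tfrac12)\log\pi+\log|\Gamma(\tfrac{1-\sigma+it}{2})|-\log|\Gamma(\tfrac{\sigma+it}{2})|$. It expands the difference $\log\Gamma(a+it/2)-\log\Gamma(b+it/2)$ by Stirling with Brent's remainder. For $-1\le\sigma\le0$ it must first apply $\Gamma(z+1)=z\Gamma(z)$ to push $b=\sigma/2$ into the right half-plane. That step brings in an extra $\tfrac12\log(1+\sigma^2/t^2)$ and a second pair $a=\tfrac12-\tfrac\sigma2$, $b=1+\tfrac\sigma2$.

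You use the asymmetric form $\zeta(s)=\chi(s)\zeta(1-s)$ instead. This leaves a single factor $\Gamma(1-s)$ with $\Re(1-s)=u\in[\tfrac12,2]$ throughout. One Stirling expansion at height $t$ (rather than two at height $t/2$, so the remainder is smaller) then covers both ranges at once. The price is the sine factor, which is exact up to $O(e^{-\pi t})$.

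The two routes must give the same $t^{-2}$ coefficient, and indeed the paper's $a,b$ expressions reduce to your $g(u)=u(u-1)(2u-1)/12$ in both ranges. Your version shows exactly where $\sqrt3/216$ and $\tfrac12$ come from; the paper only says a more careful analysis yields the result. The zeros of $g$ at $u=\tfrac12,1$ also match the exact identities $|\chi(\tfrac12+it)|=1$ and $|\chi(it)|^2=\tfrac{t}{2\pi}\tanh(\tfrac{\pi t}{2})$. The paper's symmetric setup, on the other hand, transfers to general gamma factors $\Gamma(\tfrac{s+\kappa}{2})$, and it is reused for $\arg\Gamma$ in Lemma~\ref{lemma:argGamma}.

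Two cosmetic corrections. First, your display combining the constants is garbled; they sum to $(\sigma-\tfrac12)\log 2+(\sigma-\tfrac12)\log\pi$, as you then state. Second, the Stirling remainder is not really what produces the $0.17/t$ and $1.4/t$ terms. $\Re R(z)$, like every other correction beyond $g(u)/t^2$, is actually $O(t^{-4})$. So those coefficients are generous and follow from any crude bound on the combined tails at $t=10$.
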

\begin{proof}
The functional equation for $\zeta$ gives
\[  \log|\zeta(\sigma+it)| = \log|\zeta(1-\sigma+it)| +(\sigma-1/2)\log|\pi| + \log\left| \frac{\Gamma((1-\sigma+it)/2)}\Gamma((\sigma+it)/2)\right|.\]
A precise  version of Stirling's approximation (See \cite[Section~2]{Brent2019}) gives us
\[ \log\Gamma(z) = (z-1/2)\log z - z + \frac{1}{2}\log(2\pi) + \frac{B_{2}}{2z} + R_2(z) \]
for $\Re(z)>0$ where $|R_2(z)| < |\frac{(1+\sqrt{2\pi})B_{4}}{12z^3}|$.
From this we may derive that for $0\leq a,b \leq 1$ and $t>2$:
\begin{align*}
 \log(\Gamma(& a+it/2))- \log(\Gamma(b+it/2)) \\
 &= (a+it/2-1/2)\log(a+it/2) - (b+it/2-1/2)\log(b+it/2) + (b-a) \\&\quad+ \frac{B_2(a-it/2)}{2(a^2+t^2/4)} - \frac{B_2(b-it/2)}{2(b^2+t^2/4)} + R_2(a+it/2) - R_2(b+it/2)\\
 &= (a-b)\log(t/2) + (a+it/2-1/2)\log(i+2a/t)
 \\&\quad - (b+it/2-1/2)\log(i+2b/t) + (b-a)
 \\&\quad+ \frac{B_2(a-it/2)}{2(a^2+t^2/4)} - \frac{B_2(b-it/2)}{2(b^2+t^2/4)} + R_2(a+it/2) - R_2(b+it/2).
\end{align*}
Taking real parts we obtain
\begin{align*}
 \log|\Gamma(& a+it/2)| - \log|\Gamma(b+it/2)| \\
 &= (a-b)\log(t/2) + \frac{a-1/2}{2}\log(1+4a^2/t^2)  - \frac{b-1/2}{2}\log(1+4b^2/t^2)\\&\quad + \frac{t}{2}\left(\arctan\left(2a/t\right) - \arctan\left(2b/t\right)\right) + (b-a)
  \\&\quad + \frac{B_2 a }{2(a^2+t^2/4)}-\frac{B_2 b }{2(b^2+t^2/4)}
  \\&\quad + \Re( R_2(a+it/2) - R_2(b+it/2) )\\
&=(a-b)\log\left(t/2\right) +
\frac{2a^2(a-1/2) - 2b^2(b-1/2) - \frac{4}{3}(a^3-b^3)}{t^2}
\\&\quad + \frac{2B_2}{t^2}\frac{(a-b)(1-4ab/t^2)}{(1 + 4a^2/t^2)(1 + 4b^2/t^2)  }
\\&\quad + \Ostar\!\left(  \frac{4|a-1/2|a^4 + 4|b-1/2|b^4}{t^4} \right) + \Ostar\!\left(\frac{16(a^5+b^5)}{5t^4}\right)
\\&\quad + \Ostar\!\left( \frac{4B_4(1+\sqrt{2\pi})}{3t^3} \right)\\
 \end{align*}
A more careful analysis of the error terms yields the result.

 For $-1\leq \sigma \leq 0$ we must apply
\begin{align*}
\log|\Gamma((\sigma+it)/2)| &= \log|\Gamma((\sigma+it)/2+1)| - \log|(\sigma+it)/2)| \\
&=  \log|\Gamma((\sigma+it)/2+1)| - \log|t/2| - \frac{1}{2}\log(1 + \sigma^2/t^2) \\
&= \log|\Gamma((\sigma+it)/2+1)|  - \log|t/2| + \Ostar\!\left(\frac{1}{t^2}\right).
\end{align*} 
Additionally, in the comparison
 \[  \log|\Gamma(1+\sigma/2+it/2)| - \log|\Gamma(1/2-\sigma/2+it/2)| \]
we now take $a=1/2-\sigma/2$ and $b=\sigma/2+1$ will have $a-b=-1/2-\sigma$ and $1/2\leq a,b\leq 1$. As above, combining terms and checking bounds will give the result.
\end{proof}

A similar argument also bounds changes in the argument of $\Gamma$:
\begin{lemma}\label{lemma:argGamma}
For $-1/2 \leq \delta \leq 1$ and $t>10$ we have
\[ \arg\Gamma(1/2+it/2) - \arg\Gamma(1/2+\delta/2+it/2) = -\frac{\delta\pi}{4} + \frac{\delta^2}{4t} + \Ostar\!\left(\frac{0.21}{t^3}\right)\]
Note that here $\arg\Gamma$ refers to the analytic continuation, not the principal branch.
\end{lemma}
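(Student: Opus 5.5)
The approach is to follow the proof of Lemma~\ref{lem:zetalessthanhalf}, taking imaginary parts of Stirling's formula instead of real parts. Since both points $1/2+it/2$ and $1/2+\delta/2+it/2$ lie in $\Re(z)\geq 1/4>0$, the expansion
\[ \log\Gamma(z) = (z-1/2)\log z - z + \tfrac12\log(2\pi) + \frac{B_2}{2z} + R_2(z),\qquad |R_2(z)|\leq \left|\frac{(1+\sqrt{2\pi})B_4}{12z^3}\right| \]
from \cite[Section~2]{Brent2019} holds there, with the principal $\log z$. The branch of $\log\Gamma$ it defines is the analytic continuation from the positive real axis through the right half plane. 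Hence its imaginary part is exactly the continuous $\arg\Gamma$ referred to in the statement, and differencing at the two points is legitimate.

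Write $y=t/2$, $a=1/2$ and $b=(1+\delta)/2\in[1/4,1]$, and put $z_a=a+iy$, $z_b=b+iy$. Using $\arg z = \pi/2-\arctan(x/y)$ for $z=x+iy$, we get
\[ \Im\bigl((z-\tfrac12)\log z - z\bigr) = y\log|z| + (x-\tfrac12)\bigl(\tfrac{\pi}{2}-\arctan(x/y)\bigr) - y .\]
Taking the difference between $z_a$ and $z_b$, the $-y$ terms cancel and the $x=a=1/2$ contribution vanishes. We are left with
\[ \frac{y}{2}\log\frac{a^2+y^2}{b^2+y^2} \;-\; \frac{\delta}{2}\Bigl(\frac{\pi}{2}-\arctan(b/y)\Bigr). \]
Expanding to first order in $1/y$ gives $-\delta\pi/4$ plus the $1/t$ terms
\[ \frac{a^2-b^2}{t}+\frac{\delta b}{t}, \]
which combine to $\bigl(-(2\delta+\delta^2)+(2\delta+2\delta^2)\bigr)/(4t)=\delta^2/(4t)$, as claimed.

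Next I would track the second-order Taylor remainders. For $\log(1+u)$ with $u=x^2/y^2$, the error term is $O(b^4/y^3)$. For $\arctan(u)$, the error term is $\tfrac{\delta}{2}\cdot O(b^3/y^3)$. With $|\delta|\leq 1$ and $b\leq 1$, both are $O(1/t^3)$ with explicit constants (each is multiplied by $8$ when passing from $y$ to $t$).

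The Bernoulli term contributes
\[ -\frac{B_2 y}{2}\Bigl(\frac{1}{a^2+y^2}-\frac{1}{b^2+y^2}\Bigr) = -\frac{B_2 y (b^2-a^2)}{2(a^2+y^2)(b^2+y^2)}, \]
which is $O(|b^2-a^2|/(12\,t^3)\cdot 8)$, at most a few hundredths divided by $t^3$. Each remainder satisfies $|R_2|\leq (1+\sqrt{2\pi})\frac{1}{30}\cdot\frac{8}{12t^3}\approx 0.078/t^3$, so the two together contribute about $0.156/t^3$.

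The main obstacle is purely bookkeeping: the crude sum of these pieces must fit under $0.21/t^3$. I would handle it by keeping the exact alternating signs in the $\log$ and $\arctan$ expansions (using alternating-series bounds valid for $b/y\leq 1/5$, since $t>10$). I would also optimize over $\delta\in[-1/2,1]$, where the worst case is presumably $\delta=1$, $b=1$. If the constant still does not fit, I would use $t>10$ to absorb the higher-order $1/t^5$ tails into the stated bound.
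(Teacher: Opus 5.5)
Your reduction is the same as the paper's: take imaginary parts of Brent's Stirling formula with $a=1/2$ and $b=(1+\delta)/2$. Your main term $-\delta\pi/4+\delta^2/(4t)$ and your bound of about $0.078/t^3$ for each $R_2$ are correct. The gap is in the error analysis. You treat the $t^{-3}$ terms of the $\log$ and $\arctan$ expansions as remainders to be bounded in absolute value. But the target error is itself of order $t^{-3}$, and these terms are large:
\begin{itemize}
\item the logarithm contributes $2(b^4-a^4)\,t^{-3}$, which is $\tfrac{15}{8}t^{-3}$ at $\delta=1$;
\item the arctangent contributes $-\tfrac{4}{3}\delta b^3\,t^{-3}$, which is $-\tfrac43 t^{-3}$ at $\delta=1$;
\item the Bernoulli term, your own $8(b^2-a^2)/(12t^3)$ with a minus sign, is $-\tfrac12 t^{-3}$ at $\delta=1$, not ``a few hundredths''.
\end{itemize}
A crude sum of separately bounded pieces is therefore about $3.9/t^3$, and the plan as written cannot reach $0.21/t^3$. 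The lemma holds only because of an exact cancellation,
\[ \frac{2(b^4-a^4)}{t^3}-\frac{4\delta b^3}{3t^3}-\frac{2(b^2-a^2)}{3t^3}=\frac{\delta^2(2-\delta^2)}{24\,t^3}\in\Bigl[0,\frac{1}{24\,t^3}\Bigr]. \]
You must compute these three cubic coefficients exactly and add them before estimating anything. Keeping track of the sign of each remainder separately is not enough.

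Even after this, the budget is tight. With the crude $R_2$ bound, $\tfrac{1}{24}+2(0.0779)\approx 0.1975$, which leaves only about $0.0125\,t^{-3}$ of room. The binding case is the upper side at $\delta=1$, so your guess about the worst case is right. The $t^{-5}$ terms must also be combined with their signs. At $\delta=1$ they are $-\tfrac{21}{4}$, $+\tfrac{16}{5}$ and $+\tfrac52$ times $t^{-5}$. Their absolute sum is about $0.11\,t^{-3}$ at $t=10$, which breaks the budget, whereas their signed sum is about $0.45\,t^{-5}$ and fits. So ``use $t>10$ to absorb the tails'' only works after the tails are combined.

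A cleaner way to get room is to keep the $B_4/(12z^3)$ term of Stirling's series explicitly. Its imaginary parts at $z_a$ and $z_b$ are both $-1/(360y^3)+O(y^{-5})$, so their difference is $O(t^{-5})$. Then only the next remainder needs a crude bound, and most of the $0.156/t^3$ disappears from the budget.
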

\begin{proof}
We start from 
\begin{align*}
 \log(\Gamma(& a+it/2))- \log(\Gamma(b+it/2)) \\
 &= (a-b)\log(t/2) + (a+it/2-1/2)\log(i+2a/t)
 \\&\quad - (b+it/2-1/2)\log(i+2b/t) + (b-a)
 \\&\quad+ \frac{B_2(a-it/2)}{2(a^2+t^2/4)} - \frac{B_2(b-it/2)}{2(b^2+t^2/4)} + R_2(a+it/2) - R_2(b+it/2).
\end{align*}
Now taking the imaginary part gives
\begin{align*}
 \arg\Gamma(& a+it/2)- \arg\Gamma(b+it/2) \\
  &= \frac{t}{4}\log\left|\frac{1+4a^2/t^2}{1+4b^2/t^2} \right| + (a-b)\frac{\pi}{2} - (a-1/2)\arctan(2a/t)
  \\&\quad + (b-1/2)\arctan(2b/t)
  \\&\quad+ \frac{4B_2}{t^3} \frac{a^2 - b^2}{(1+4a^2/t^2)(1+4b^2/t^2)} + \Im( R_2(a+it/2) - R_2(b+it/2) ).
\end{align*}
Taking $a=1/2$ and $b=1/2+\delta/2$, then several applications of Taylor's Theorem to verify upper and lower bounds, gives the result.
\end{proof}

\begin{lemma}\label{lemma:logderzetabound}
Write the Laurent series for $\zeta'/\zeta$ centered at $1$ as
\[ -\frac{\zeta'}{\zeta}(1+z) = \frac{1}{z} + \sum_{n=0}^{\infty}  (-1)^{n+1} a_n z^n \]
then we have for $n\geq 1$
\[ a_n = 3^{-n-1} +5^{-n-1} + 7^{-n-1}+ \Ostar\!\left(\frac{6}{10} \cdot 7^{-n}\right) \]
and hence we have for any $x\in (0,2]$ and any $N\geq 0$ that
\[  \frac{1}{x} +  \sum_{n=0}^{2N}  (-1)^{n+1} a_n x^n < -\frac{\zeta'}{\zeta}(1+x) <  \frac{1}{x} +  \sum_{n=0}^{2N-1}  (-1)^{n+1} a_n x^n \]
and hence
\[ -\log(x) +  \sum_{n=0}^{2N-1}  (-1)^n \frac{a_n}{n+1} x^{n+1} < \log(\zeta(1+x)) < -\log(x)  +  \sum_{n=0}^{2N} (-1)^n \frac{a_n}{n+1} x^{n+1} \]
where an upper index of $-1$ denotes the empty sum.
\end{lemma}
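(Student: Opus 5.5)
The plan is to get an exact formula for the $a_n$ from the partial fraction (Hadamard) expansion of $\zeta'/\zeta$, and then to read off both displayed inequalities from alternating-series estimates.

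\textbf{Step 1: exact formula for $a_n$.} From the Hadamard product for $(s-1)\zeta(s)$ we have
\[ -\frac{\zeta'}{\zeta}(s) = \frac{1}{s-1} - \sum_{\omega}\Bigl(\frac{1}{s-\omega}+\frac{1}{\omega}\Bigr) + C, \]
where $\omega$ runs over all zeros (trivial $-2,-4,\dots$ and nontrivial $\rho$) and $C$ is a constant. Put $s=1+z$ and expand each $-1/(z-(\omega-1))$ as a geometric series in $z$; this is valid for $|z|<3$, since the nearest pole other than $z=0$ is the trivial zero at $z=-3$. For $n\ge1$ the constants drop out. Comparing with the normalization $(-1)^{n+1}a_n z^n$ gives
\[ a_n=\sum_{k\ge1}(2k+1)^{-n-1}+(-1)^{n}\sum_{\rho}(\rho-1)^{-n-1}. \]
The nontrivial zeros are paired by conjugation, so the second sum is real. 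The constant term $a_0$ is just the value $-\gamma$ of the constant term of the expansion, and is handled directly.

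\textbf{Step 2: the error estimate.} Keep the terms $k=1,2,3$, i.e. $3^{-n-1}+5^{-n-1}+7^{-n-1}$. The trivial tail is
\[ \sum_{k\ge4}(2k+1)^{-n-1}\le 9^{-n-1}+\tfrac12\int_9^\infty u^{-n-1}\,du . \]
Since $|\rho-1|\ge 14$, the nontrivial part is at most
\[ \sum_\rho |\rho-1|^{-n-1}\le 14^{-(n-1)}\sum_\rho|\rho-1|^{-2}, \]
and the last sum converges to an explicit small number. Both pieces are bounded by $\tfrac{6}{10}\,7^{-n}$ for all $n\ge1$. I expect only $n=1,2$ to need an actual check; beyond that the ratio bounds $(9/7)^{-n}$ and $(14/7)^{-n}$ make it automatic. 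This settles the first claim.

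\textbf{Step 3: alternating series bound.} For $0<x\le2$ the series $\sum(-1)^{n+1}a_nx^n$ converges. From Step 2, $a_n>0$: already at $n=1$ we have $3^{-2}>0.6\cdot7^{-1}$, and the comparison only improves for larger $n$. The same bounds give
\[ \frac{a_{n+1}x}{a_n}\le \frac{2}{3}(1+\epsilon)<1, \]
so the terms alternate in sign and decrease in magnitude. The Leibniz criterion then says that truncating after an even index $2N$ (last term negative, since the sign is $(-1)^{n+1}$) undershoots, and truncating after an odd index $2N-1$ overshoots. The strictness of both inequalities is inherited from strict monotonicity.

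This is the step I expect to be the real obstacle. The a priori bounds on $a_n$ must be sharp enough to give both positivity and the ratio condition at the small indices $n=0,1,2$ with $x=2$. If the ratio test is too tight there, the fallback is to avoid it: apply the alternating argument termwise to the exact functions $-1/(x+2k+1)$. Each of these has a genuinely alternating, decreasing Taylor series for $x<2k+1$. The remaining nontrivial-zero contribution is then absorbed by the slack of the trivial ones.

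\textbf{Step 4: integration.} Integrate the inequalities of Step 3 from $0$ to $x$. Since $\zeta(1+u)\sim 1/u$, the function $\log\zeta(1+u)+\log u$ tends to $0$ as $u\to0^+$, so the integrals from $0$ exist. Integrating the term $a_nu^n$ yields $a_n x^{n+1}/(n+1)$, and integrating $-\frac{\zeta'}{\zeta}(1+u)-\frac1u$ yields $-\log\zeta(1+x)-\log x$. Multiplying through by $-1$ flips the inequalities and shifts the truncation parity by one, which gives exactly the stated bounds for $\log\zeta(1+x)$.
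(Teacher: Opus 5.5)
Your route differs from the paper's. The paper uses no information about zeros outside the disc. It sets $G(z)=\frac{\zeta'}{\zeta}(1+z)+\frac{1}{z}-\frac{1}{z+3}-\frac{1}{z+5}-\frac{1}{z+7}$, which is analytic for $|z|<9$, rigorously verifies $|G|<0.6$ on $|z|=7$, and obtains the first claim from Cauchy's estimate. Positivity of $a_n$ and monotonicity of $a_n2^n$ are then read off from that bound, with the first few indices checked by direct computation. The last two claims follow as in your Steps 3--4.

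You instead derive an exact formula for $a_n$ from the Hadamard expansion. This replaces a rigorous evaluation of $\zeta'/\zeta$ on a circle reaching $\Re s=-6$ with classical inputs: the first ordinate $14.13\ldots$, and $\sum_\rho|\rho-1|^{-2}=\sum_\rho|\rho|^{-2}\approx 0.0462$, which equals $2+\gamma-\log 4\pi$ under RH and holds unconditionally up to a negligible correction. In return you get sign information and an error of size $O(9^{-n})$. If you keep the whole sum $T_n:=\sum_{k\ge1}(2k+1)^{-n-1}$, the error is even $O(14^{-n})$.

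Two details need repair.

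First, the sign. Since $-1/(z-(\rho-1))=\sum_{n} z^n(\rho-1)^{-n-1}$, the formula is $a_n=T_n+(-1)^{n+1}\sum_\rho(\rho-1)^{-n-1}$, not $(-1)^n$. As a check, at $n=1$ this gives $(\pi^2/8-1)-0.0462\approx 0.1875=\gamma^2+2\gamma_1$. Also $a_0=\gamma$; the constant term of the expansion is $-a_0$.

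Second, Step 2 as written fails at $n=1$. The triangle inequality gives $\sum_{k\ge4}(2k+1)^{-2}+\sum_\rho|\rho-1|^{-2}\approx 0.0622+0.0462=0.108$, which exceeds $0.6/7\approx 0.0857$; your integral bound $0.0679$ for the tail is worse still. You need the sign. Writing $\rho=\beta+i\tau$, you have $\Re\bigl((\rho-1)^{-2}\bigr)=((1-\beta)^2-\tau^2)|\rho-1|^{-4}<0$. With the corrected sign, the nontrivial zeros therefore lower $a_1$ by at most $0.0462$, and the error lies in roughly $[0.016,0.062]$. Alternatively, just evaluate $\gamma^2+2\gamma_1$ directly. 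With your sign, the same reasoning would put the error near $0.108$ and appear to refute the claim. For $n\ge 2$ your crude bound suffices, e.g. $0.0078<0.0122$ at $n=2$.

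Similarly, the ratio condition in Step 3 at $n=1$ does not follow from the $0.6\cdot 7^{-n}$ bounds, which only give $2a_2/a_1\le 2(0.0602)/0.0858\approx 1.40$. It does follow from your exact formula. From $a_1\ge 0.1875$ and $a_2\le \frac{7}{8}\zeta(3)-1+0.0462/14\approx 0.0551$ you get $2a_2/a_1\le 0.59$. Larger $n$ behave similarly, and at $n=0$ the sign information gives $2a_1/a_0\le 2(\pi^2/8-1)/\gamma\approx 0.81$. So the obstacle you anticipated disappears once you argue from Step 1 rather than from the truncated bound of Step 2, and the fallback is unnecessary. Step 4 is correct.
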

\begin{proof}
The first claim follows the proof of Chirre-Helfgott \cite[Lemma~A.7]{ChirreHelfgott2025}, their proof there attributed to
A. Kalmynin \cite{Kalmynin2025}. We instead use a radius of $7$ and remove the singularities at $1$, $-2$, $-4$, and $-6$.
Setting \[ G(z) = \frac{\zeta'}{\zeta}(1+z) + \frac{1}{z} - \frac{1}{z+3} - \frac{1}{z+5} - \frac{1}{z+7}. \]
We bound the Taylor coefficients of $G(z)$, which is analytic on $|z|\leq 7$ by the Cauchy integral formula.
We rigorously verify the bound on the boundary of $|G(z)| < \frac{6}{10}$ for $|z|=7$. Rearranging and elementary calculations then give the result.

The second claim is then a simple consequence of this being an alternating series.
We see that the sequence is positive by noting that $a_0$ is positive and that the only negative term decreases at least as quickly as the positive terms.
The same idea applied to $\frac{d}{dn} (a_n2^n)$ gives that the $a_n2^n$ are a decreasing sequence for $n\geq 2$, one can checks the first few terms through direct computations.

The third comes from the first using
\[ \log(\zeta(1+x)) = -\log(x) + \int_0^{x} \frac{d}{du}(\log(u\zeta(1+u)))\,du  =   -\log(x) + \int_0^{x} \left(\frac{1}{u} + \frac{\zeta'}{\zeta}(1+u)\right)\,du\]
\end{proof}
\begin{remark}\label{lemma:logderzetabound-remark}
The coefficients $a_n$ above have the formulas
\[ a_n =  -\frac{1}{n!}\lim_{m\rightarrow \infty} \left( \left( \sum_{k=1}^{m}  \frac{\log(k)^n\Lambda(k)}{k} \right) - \frac{(\log m)^{n+1}}{n+1}\right) .\]
and are related to the Stieltjes constants (See \cite{Coffey2009}).
\end{remark}


\end{document}